\documentclass[11pt]{article}
\usepackage{amsfonts}
\usepackage{mathrsfs}
\usepackage{bbm}
\usepackage{amssymb,amsmath,graphicx}
\usepackage{float} \usepackage[colorlinks=true]{hyperref}
\usepackage{authblk}
\usepackage[numbers,sort&compress]{natbib}

\hypersetup{urlcolor=blue, citecolor=red}

\allowdisplaybreaks

\title{Long-time behavior of solution to a chemotaxis system
with weakly singular sensitivity and logistic source}
\author{Xiangdong Zhao{\thanks{
E-mail: zhaoxd1223@163.com }}\\
{{\small  School of Mathematics, Liaoning Normal University, Dalian 116029, P.R. China } }
}

\date{}

\begin{document}

 \maketitle
\date{}
\newtheorem{theorem}{Theorem}
\newtheorem{definition}{Definition}[section]
\newtheorem{lemma}{Lemma}[section]
\newtheorem{proposition}{Proposition}[section]
\newtheorem{corollary}{Corollary}[section]
\newtheorem{remark}{Remark}
\renewcommand{\theequation}{\thesection.\arabic{equation}}
\catcode`@=11 \@addtoreset{equation}{section} \catcode`@=12
\maketitle{}
\begin{abstract}

This paper is concerned with the parabolic-elliptic chemotaxis system with weakly singular sensitivity
and logistic source:~$
u_t=\Delta u-\chi\nabla\cdot(\frac{u}{v^\alpha}\nabla v)
+ru-\mu u^2$, $0=\Delta v-v+u,$ under the homogeneous Neumann boundary in a smooth bounded convex domain $\Omega\subset\mathbb{R}^n$ for $n\ge 2$. where $\alpha\in(0,1)$ and $\chi,r,\mu>0$. If $\alpha\in(0,\frac{n+2}{2n})$ and $\mu>\mu_0$ with $\mu_0>0$ suitably large, we give the explicit expression of the upper bound for $u$ with respect to the coefficient $\mu$ after some time, without establishing the uniformly positive bound for $v$ from below. Furthermore, by dealing with the corresponding non-singular chemotaxis system via the transformation $z=v^{1-\alpha}$, it is proved that the solution $(u,v)$ converges to $(\frac{r}{\mu},\frac{r}{\mu})$ in $L^\infty$-norm as $t\rightarrow\infty$ if $\alpha\in(0,\frac{1}{2})$ and $\mu>\mu_\star$ sufficiently large, which is moreover enjoying exponential convergence when $\alpha\in(0,\frac{n+2}{n^2+4})$.

\begin{description}
\item[2010MSC:]  35B45; 35B40; 92C17
\item[Keywords:] Chemotaxis; Singular sensitivity; Long-time behavior
\end{description}
\end{abstract}
\renewcommand{\thefootnote}{}

\section{Introduction}
Chemotaxis is a spontaneous biological phenomena. 1970, Keller and Segel proposed the following classical chemotaxis system \cite{KS1970}
\begin{equation}\label{p1}
\begin{cases}
u_t=\Delta u-\chi\nabla\cdot({u}{\nabla v})+f(u), \\
\tau v_t=\Delta v-v+u,
\end{cases}
\end{equation}
with $\chi>0$ and $\tau\in\{0,1\}$, in which the cells (with density $u$) move towards the concentration gradient of a chemical
substance (with concentration $v$) produced by the cells themselves. The crucial fact to study the global dynamic behavior of classical solution to \eqref{p1} is that the estimate on $\nabla v$ can be obtained by $u$ due to the linear equation for $v$ so as to control the chemotactic term $-\chi\nabla\cdot (u\nabla v)$. Generally, initial data suitably small, or space dimensionality suitably low if $f(u)=0$, or strong damping exponent in source $f(u)\not\equiv 0$ is sufficient to ensure global dynamic behavior of solution to \eqref{p1}, referring to \cite{NSY1997,C2015,TW2007,MW2010,C2017} and the reference therein. Whereas, blow-up phenomena can be constructed when initial data suitably large or space dimensionality suitably large if $f(u)=0$, or weak damping effect in source $f(u)\not\equiv 0$ \cite{N1995,N2001,W2013,W2018}.

If cellular behaviors obeys the Weber-Fechner's law, Keller and Segel in 1971 indicated that subjective sensation is proportional to the logarithm of the stimulus intensity and introduced the following chemotaxis system with logarithm sensitivity \cite{KS1971}
\begin{equation}\label{p2}
\begin{cases}
u_t=\Delta u-\chi\nabla\cdot(\frac{u}{v}{\nabla v})+f(u), \\
\tau v_t=\Delta v-v+u,
\end{cases}
\end{equation}
 with $\chi>0$ and $\tau\in\{0,1\}$. Intuitively, the possible singularity in $\frac{\chi}{v}$ is the challenge to study of global dynamic behaviors of solution. Let us recall the results for the case of $f(u)=0$. The conservation of mass for $u$ and the quantitative lower estimate for solution $v$ ensure that the chemical signal $v$ satisfies the {\it a priori } uniformly positive bound from below, i.e.,
\begin{align}\label{i1}
v(x,t)\ge c_0 \int_\Omega u_0dx.
\end{align}
where $c_0=c_0(\Omega)>0$. This means that the singularity is in fact absent, and provides an intuitive and effective way to derive global boundedness of solution to \eqref{p2}by direct releasing the bounded chemotactic sensitive function $\frac{\chi}{v}$ from the cross-diffusion term $-\chi\nabla\cdot(\frac{u}{v}\nabla v)$. Generally, chemotactic sensitive coefficient $\chi>0$ properly large relative to $n,\alpha$ will ensure global dynamic behavior of classical solution to the system \eqref{p2}, seeing \cite{A2019,HK2025,FS2016,FWY2015,L2026,AKL2019,WY2018,LX2026} and the references therein.
When $\tau=0$, if $\chi>\frac{2n}{n-2}$ with $n\ge 3$, and the moment of order 2 for $u_0$ sufficiently small under radial assumption, there exists a finite time blow-up solution \cite{NS1997}. Recall a corresponding chemotasixs $\epsilon u_t=\Delta u-\chi\nabla \cdot(\frac{u}{v}\nabla v), v_t=\Delta v-v+u$,
if $n\ge 3$ and $\epsilon>0$ sufficiently small, global boundedness of solution has been proved if $\chi<\frac{n}{n-2}$ \cite{FS2018}, whereas, spontaneous emergence of arbitrarily large
values of $u$ has been derived if $\chi>\frac{n}{n-2}$ under radial assumption in \cite{W2022}.
If $f(u)=ru-\mu u^\gamma$ with $\gamma\ge 2$ and $r,\mu>0$, the proliferation-death mechanism in logistic source destroys the mass conservation of $u$ and then makes the singularity in chemotactic sensitivity $\frac{\chi}{v}$ may be arrived. This is the essential challenge different to the case of $f(u)=0$. A natural and effective method to study the global boundedness and long-time behavior of classical solution is to establish the uniformly positive lower bound for $v$, which indeed can be arrived if $\chi>0$ suitably small relative to $r>0$ and $\gamma\ge 2$ \cite{FWY2014,HW2021,ZZ2017,ZZ2019,CWY2016,LL2021}. When $\tau=1$ and $\gamma>1$, global boundedness of classical solution to \eqref{p2} can be obtained if $\chi\in(0,\min\{\frac{1}{2}, \frac{1}{\sqrt{2(n-1)}}\})$,  without establishing this effective estimate on $v$ \cite{Z2022}.

Now, let us recall the following chemotaxis system with {\it weakly} singular sensitivity
\begin{equation}\label{p3}
\begin{cases}
u_t=\Delta u-\chi\nabla\cdot(\frac{u}{v^\alpha}{\nabla v})+ru-\mu u^2, \\
\tau v_t=\Delta v-v+u,
\end{cases}
\end{equation}
with $\chi,r,\mu>0$ and $\alpha\in(0,1)$. Thanks to the weakly singular sensitivity $\frac{\chi}{v^\alpha}$ with $\alpha\in(0,1)$, global boundedness of classical solution can be obtained if $\mu>0$ suitably larger despite the potential singularity in chemotactic sensitive function \cite{Z2023,L2025,LK2025,K2025}. In addition, global long-time behavior of classical solution have been established for the corresponding parabolic-elliptic case, based on the necessary estimate of the uniformly positive lower bound for $v$ in \cite{K2026,ZMT2025}.

Whereas, a natural question is that {\it whether to establish the uniformly positive lower bound for $v$ is necessary to obtain long-time behavior of classical solution to \eqref{p1} or not.}
In this paper, we concern with the long-time behavior of classical solution to the parabolic-elliptic chemotaxis system with weakly singular sensitivity and logistic source:
\begin{equation}\label{p}
\begin{cases}
u_t=\Delta u-\chi\nabla \cdot(\frac{u}{v^\alpha}\nabla v)+ru-\mu u^2, & x\in\Omega,~~t>0,\\
\displaystyle  0=\Delta v-v+u,& x\in\Omega,~~t>0,\\
  \displaystyle \frac{\partial u}{\partial \nu}=\frac{\partial v}{\partial \nu}=0 ,& x\in\partial\Omega,~~t>0,\\
  \displaystyle u(x,0)=u_0(x),  &x\in\overline\Omega,
\end{cases}
\end{equation}
where $\chi,r,\mu>0$ and $\alpha\in(0,1)$. $\Omega\subset \mathbb{R}^n$ with $n\ge 2$ is a smooth bounded convex domain and the positive initial datum $u_0\in C^1(\bar\Omega)$.

 Let
\begin{equation}\label{c1}
\tilde\mu_0:=
\begin{cases}
2^\frac{\alpha}{1-\alpha}\chi^\frac{1}{1-\alpha}+4^\frac{2-\alpha}{1-\alpha}(\chi\alpha)^\frac{1}{1-\alpha},~~&n=2,3,\\
(\frac{n}{2})^\frac{\alpha}{1-\alpha}\chi^\frac{1}{1-\alpha}+2\big(2+\frac{4}{n-2}\big)^\frac{1}{2}n^\frac{1}{1-\alpha}
(\chi\alpha)^\frac{1}{1-\alpha},~~&n\ge 4,
\end{cases}
\end{equation}
and $\mu_0:=1+\tilde\mu_0$. At first, we establish the explicit expression of upper bound of $u$ with respect to the coefficient $\mu$.
\begin{theorem}\label{th1}
Let $n\ge 2$ and $\alpha\in(0,\frac{n+2}{2n})$ with $\chi,r>0$. If $\mu>\mu_0$, then the system \eqref{p} possesses a globally bounded classical solution. Moreover, there exists $t_0>0$ and $L_0=L_0(\chi,\alpha,r,\Omega)>0$ independent of $\mu$ such that
\begin{align}\label{th11}
\|u\|_{L^\infty(\Omega)}\le \frac{L_0}{\mu},~~t>t_0.
\end{align}
\end{theorem}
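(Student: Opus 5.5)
We plan to prove Theorem \ref{th1} by an $L^p$ energy argument whose constants are tracked in $\mu$, followed by a rescaled Moser iteration. No positive lower bound for $v$ will be needed.

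\textbf{Step 1: Elliptic estimates for $v$.} Local existence and the extensibility criterion ($\|u\|_{L^\infty}$ stays bounded on the maximal interval) are standard. From $0=\Delta v-v+u$ with $u>0$, the comparison principle gives $v>0$. Integrating the first equation gives the mass inequality $\frac{d}{dt}\int_\Omega u\le r\int_\Omega u-\frac{\mu}{|\Omega|}(\int_\Omega u)^2$, so
\begin{align*}
\limsup_{t\to\infty}\int_\Omega u\le \frac{r|\Omega|}{\mu}.
\end{align*}
The singular factor is handled by substitution. Since $\nabla v/v^\alpha=\frac{1}{1-\alpha}\nabla v^{1-\alpha}$, testing the $u$-equation by $u^{p-1}$ gives
\begin{align*}
\chi(p-1)\int_\Omega u^{p-1}v^{-\alpha}\nabla u\cdot\nabla v
=-\frac{\chi(p-1)}{p}\int_\Omega u^p\,\nabla\cdot(v^{-\alpha}\nabla v).
\end{align*}
We then expand
\begin{align*}
\nabla\cdot(v^{-\alpha}\nabla v)=v^{-\alpha}\Delta v-\alpha v^{-\alpha-1}|\nabla v|^2=v^{1-\alpha}-uv^{-\alpha}-\alpha v^{-1-\alpha}|\nabla v|^2 .
\end{align*}
The term $-uv^{-\alpha}$ contributes with a favorable sign, and the term $v^{1-\alpha}u^p$ is bounded by $u^p(1+v)$ and absorbed using $\mu$. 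The remaining term $\alpha u^p v^{-1-\alpha}|\nabla v|^2$ is the dangerous one.

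\textbf{Step 2: The dangerous term via a weighted functional.} For $\alpha u^pv^{-1-\alpha}|\nabla v|^2$ we would use the pointwise estimate $\frac{|\nabla v|^2}{v}\le$ (something controlled). In a convex domain, the functional $\int_\Omega|\nabla v|^{2q}/v^{q}$ is handled by the Bochner identity, where convexity makes the boundary term $\partial_\nu|\nabla v|^2\le0$ have the right sign. Alternatively, we write $|\nabla v|^2v^{-1-\alpha}=\frac{4}{(1-\alpha)^2}|\nabla v^{(1-\alpha)/2}|^2$ and invoke an elliptic estimate of the form
\begin{align*}
\int_\Omega v^{-q-\alpha q}|\nabla v|^{2q}\le C\int_\Omega v^{(1-\alpha)q}\ (\dots).
\end{align*}
We then use Young's inequality:
\begin{align*}
\alpha u^pv^{-1-\alpha}|\nabla v|^2\le \varepsilon u^{p+1}+C_\varepsilon\bigl(v^{-1-\alpha}|\nabla v|^2\bigr)^{p+1}.
\end{align*}
This is where the restriction $\alpha<\frac{n+2}{2n}$ and the explicit constants in $\tilde\mu_0$ (including the $n$-dependent factor $(2+\frac{4}{n-2})^{1/2}$ from a Sobolev-type inequality) should enter. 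The aim is to show that for $p$ slightly larger than $n/2$, $\mu>\mu_0$ yields
\begin{align*}
\frac{d}{dt}\int_\Omega u^p+\int_\Omega u^p\le C\int_\Omega u+C,
\end{align*}
with a coefficient $\mu-\tilde\mu_0-1>0$ of $\int_\Omega u^{p+1}$ on the left. We expect this step, closing the weighted gradient estimate of $v$ uniformly without a lower bound on $v$, to be the main obstacle. We would first try the identity $\Delta v^{1-\alpha}$ together with Gagliardo--Nirenberg applied to $z=v^{1-\alpha}$.

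\textbf{Step 3: Scaling in $\mu$.} To obtain $L_0/\mu$ rather than just boundedness, we set $w=\mu u$ and $\tilde v=\mu v$. Then $w$ solves
\begin{align*}
w_t=\Delta w-\chi\mu^{\alpha}\nabla\cdot(w\tilde v^{-\alpha}\nabla\tilde v)+rw-w^2,
\end{align*}
while $\tilde v$ solves $0=\Delta\tilde v-\tilde v+w$. The factor $\mu^\alpha$ is a problem, so the scaling must be combined with the Step 2 estimates, which are uniform in $\mu>\mu_0$. Concretely, we redo the $L^p$ estimate tracking powers of $\mu$. The eventual mass bound $\int_\Omega u\le 2r|\Omega|/\mu$ for $t>t_1$, together with the absorbing term $\mu\int_\Omega u^{p+1}$, gives $\limsup\|u\|_{L^p}\le C_p/\mu$, with the dependence on $\mu$ coming from the ODE $y'\le -c\mu y^{(p+1)/p}+C\mu^{-p}$.

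\textbf{Step 4: Moser--Alikakos iteration.} Once $L^p$ bounds hold for some $p>n/2$, elliptic regularity gives $\|\nabla v\|_{L^\infty}$ in terms of $\|u\|_{L^p}$. The remaining difficulty is the factor $v^{-\alpha}$. Since $v\ge c\int_\Omega u\cdot$(Green lower bound) $\sim c/\mu$ only in the long-time regime, we instead bound $v^{-\alpha}|\nabla v|$ by using $|\nabla v|\le Cv^{1/2}\|v\|_{W^{2,\infty}}^{1/2}$-type estimates (valid for nonnegative functions with Neumann data in convex domains). This yields $v^{-\alpha}|\nabla v|\le C v^{1/2-\alpha}$, which is bounded for $\alpha\le1/2$ and is otherwise interpolated using the $L^p$ range. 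A Moser iteration on $\int_\Omega u^{p_k}$ with constants tracked homogeneously then gives $\|u\|_{L^\infty}\le L_0/\mu$ for $t>t_0$. Global boundedness for all $t$ follows from the same inequalities on finite intervals.
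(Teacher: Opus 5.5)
Your overall architecture (mass decay of order $1/\mu$, then $L^p$ bounds with $\mu$-independent constants, then $L^\infty$) matches the paper's. However, the core step is missing, and Step 1 has a sign error that hides half of the difficulty. Expanding the cross term correctly gives
\[
\chi(p-1)\int_\Omega \frac{u^{p-1}\nabla u\cdot\nabla v}{v^\alpha}=-\frac{\chi(p-1)}{p}\int_\Omega u^pv^{1-\alpha}+\frac{\chi(p-1)}{p}\int_\Omega\frac{u^{p+1}}{v^\alpha}+\frac{\chi(p-1)\alpha}{p}\int_\Omega u^p\frac{|\nabla v|^2}{v^{1+\alpha}} .
\]
So the $v^{1-\alpha}$ part is the harmless one and can simply be dropped. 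The $-uv^{-\alpha}$ part instead produces the unfavorable singular term $\int_\Omega u^{p+1}v^{-\alpha}$, which $\mu\int_\Omega u^{p+1}$ cannot absorb without a lower bound for $v$.

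The idea you are missing handles both singular terms at once: test the $v$-equation with $u^p/v$. Integrating by parts and applying Young's inequality with $\nabla u^p=2u^{p/2}\nabla u^{p/2}$ gives
\[
\frac12\int_\Omega u^p\frac{|\nabla v|^2}{v^2}+\int_\Omega\frac{u^{p+1}}{v}\le 2\int_\Omega|\nabla u^{\frac p2}|^2+\int_\Omega u^p ,
\]
so both quantities are controlled by the diffusion. One then splits the \emph{weights}, $v^{-\alpha}\le\epsilon v^{-1}+C_\epsilon$ and $v^{-1-\alpha}\le\epsilon v^{-2}+C_\epsilon v^{-1}$, and absorbs the $v^{-1}$ and $v^{-2}$ pieces into $\int_\Omega|\nabla u^{p/2}|^2$. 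The leftover $\int_\Omega u^p|\nabla v|^2/v$ is handled by H\"older against the Bochner-type convex-domain estimate $\int_\Omega|\nabla v|^{2(p+1)}/v^{p+1}\le(L_2+\epsilon)\int_\Omega u^{p+1}+C_\epsilon(\int_\Omega u)^{p+1}$, with an explicit $L_2$. What remains is a $\mu$-independent multiple of $\int_\Omega u^{p+1}$, which $\mu>\mu_0$ absorbs, plus $(\int_\Omega u)^{p+1}=O(\mu^{-p-1})$. This is where $\tilde\mu_0$ comes from: its factor $(2+\frac4{n-2})^{1/2}$ is $(\frac{2p}{p-1})^{1/2}$ from that elliptic estimate at $p\approx\frac n2$, not from a Sobolev inequality. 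The Young inequality you propose, $\alpha u^pv^{-1-\alpha}|\nabla v|^2\le\varepsilon u^{p+1}+C_\varepsilon(v^{-1-\alpha}|\nabla v|^2)^{p+1}$, does not close. The elliptic estimate turns $\int_\Omega|\nabla v|^{2(p+1)}v^{-(1+\alpha)(p+1)}$ into $\int_\Omega u^{p+1}v^{-\alpha(p+1)}$, a far stronger singularity.

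Step 4 also does not work as written. A Glaeser-type bound $|\nabla v|\le Cv^{1/2}\|D^2v\|_{L^\infty}^{1/2}$ needs $W^{2,\infty}$ control of $v$. Elliptic theory does not give this from $u\in L^\infty$ (only $W^{2,q}$ for $q<\infty$), let alone with $1/\mu$ scaling. The case $\alpha>\frac12$ is also left as an unspecified ``interpolation''.

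The paper uses no pointwise bound and no Moser iteration. It first bootstraps $\|u\|_{L^p}\le C_p/\mu$ for every $p$ via Gagliardo--Nirenberg. It then uses the variation-of-constants formula with the $L^q\to L^\infty$ estimate for $e^{t(\Delta-1)}\nabla\cdot$, $q>n$. It bounds $\|u\nabla v/v^\alpha\|_{L^q}$ by H\"older, writing $u|\nabla v|v^{-\alpha}=u\,v^{\frac12-\alpha}(|\nabla v|^2/v)^{\frac12}$ and using $\int_\Omega(|\nabla v|^2/v)^m\le C\int_\Omega u^m$. For $\alpha>\frac12$ it also uses $\int_\Omega u/v\le|\Omega|$, obtained by testing the $v$-equation with $1/v$. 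These bounds are of order $2-\alpha\ge1$ in $L^p$ norms of $u$, so the $1/\mu$ decay transfers directly to $L^\infty$.
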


Via the transformation of $z=v^{1-\alpha}$ for $\alpha\in(0,1)$, we obtain the global boundedness and H\"{o}lder regularity of $u$ by dealing with the corresponding non-singular chemotaxis system of $(u,z)$ for $\alpha\in(0,\frac{1}{2})$, and furthermore the estimate of $\int_\Omega |\nabla u|^{q_0}dx$ with some $q_0>n$ for $\alpha\in(0,\frac{n+2}{n^2+4})$ based on the {\it a priori } estimate $\int_\Omega \frac{|\nabla v|^2}{v^2}dx+\int_\Omega \frac{u}{v}dx=|\Omega|$. By means of the qualitative estimate of $\|u\|_{L^\infty(\Omega)}$ with respect to the coefficient $\mu$, we can prove for $\alpha\in(0,\frac{1}{2})$ that
\begin{align*}
\int_\Omega \frac{|\nabla v|^2}{v^{2\alpha}}dx\le \frac{1}{\chi^2}\int_\Omega \frac{|\nabla u|^2}{u^2}dx
\end{align*}
when $\mu>0$ sufficiently large, which will ensures the following long-time behavior of classical solution to \eqref{p}.
\begin{theorem}\label{th2}
Let $n\ge 2$ and $\alpha\in(0,\frac{1}{2})$ with $\chi,r>0$. Then there exists some $\mu_\star>0$ such that
\begin{align}\label{th21}
\|u-\frac{r}{\mu}\|_{L^\infty(\Omega)}+\|v-\frac{r}{\mu}\|_{L^\infty(\Omega)}\rightarrow 0,~~as~t\rightarrow\infty,
\end{align}
provided $\mu>\mu_\star$. Moreover, if $\alpha\in(0,\frac{n+2}{n^2+4})$ with $\mu>\mu_\star$, there exists some $\eta_\star=\eta_\star(r)>0$ and $L_\star=L_\star(\chi,\alpha,r,\mu,\Omega)>0$ such that
\begin{align}\label{th22}
\|u-\frac{r}{\mu}\|_{L^\infty(\Omega)}+\|v-\frac{r}{\mu}\|_{L^\infty(\Omega)}\le L_\star e^{-\eta_\star t},~~t>0.
\end{align}
\end{theorem}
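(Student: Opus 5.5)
We will prove Theorem \ref{th2} with a Lyapunov functional of the type used for logistic Keller--Segel systems. For $\bar u:=\frac{r}{\mu}$, set
\begin{align*}
\mathcal{E}(t):=\int_\Omega\Big(u-\bar u-\bar u\ln\frac{u}{\bar u}\Big)dx\ge 0 .
\end{align*}
Testing the first equation of \eqref{p} by $1-\frac{\bar u}{u}$ and using $r-\mu u=-\mu(u-\bar u)$ gives
\begin{align*}
\frac{d}{dt}\mathcal{E}(t)=-\bar u\int_\Omega\frac{|\nabla u|^2}{u^2}dx+\chi\bar u\int_\Omega\frac{\nabla u\cdot\nabla v}{u\,v^\alpha}dx-\mu\int_\Omega(u-\bar u)^2dx .
\end{align*}
By Young's inequality, the cross term is at most $\frac{\bar u}{2}\int_\Omega\frac{|\nabla u|^2}{u^2}dx+\frac{\chi^2\bar u}{2}\int_\Omega\frac{|\nabla v|^2}{v^{2\alpha}}dx$. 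The key comparison announced before the theorem, $\int_\Omega\frac{|\nabla v|^2}{v^{2\alpha}}dx\le\frac{1}{\chi^2}\int_\Omega\frac{|\nabla u|^2}{u^2}dx$ for large $\mu$, then yields $\mathcal{E}'(t)\le-\mu\int_\Omega(u-\bar u)^2dx$ for $t>t_0$. Hence $\int_{t_0}^\infty\int_\Omega(u-\bar u)^2dxdt<\infty$.

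\textbf{Proving the comparison (main obstacle).} Write $z=v^{1-\alpha}$, so that $\frac{|\nabla v|^2}{v^{2\alpha}}=(1-\alpha)^{-2}|\nabla z|^2$. The plan has three steps.

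First, test $-\Delta v+v=u$ against suitable functions. Combining this with the \emph{a priori} identity $\int_\Omega\frac{|\nabla v|^2}{v^2}dx+\int_\Omega\frac{u}{v}dx=|\Omega|$ and with Theorem \ref{th1} ($u\le L_0/\mu$ for $t>t_0$) will show that $v$ is uniformly $O(1/\mu)$ from above.

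Second, bound $\int_\Omega\frac{|\nabla v|^2}{v^{2\alpha}}dx$ by $\int_\Omega\frac{|\nabla u|^2}{u^2}dx$ with a constant that tends to $0$ as $\mu\to\infty$. To do this, differentiate the elliptic equation, which gives $-\Delta\nabla v+\nabla v=\nabla u$. Testing this against $v^{-2\alpha}\nabla v$ should give
\begin{align*}
\int_\Omega v^{-2\alpha}|\nabla v|^2dx\le \int_\Omega v^{-2\alpha}\nabla u\cdot\nabla v\,dx+\text{(terms with } \nabla v^{-2\alpha}).
\end{align*}
We then write $\nabla u=u\nabla\ln u$ and use $u\le L_0/\mu$ to extract a small factor. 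The terms coming from $\nabla v^{-2\alpha}$ are handled via convexity of $\Omega$, since then $\partial_\nu|\nabla v|^2\le0$ on $\partial\Omega$. We also use the bound $v^{1-2\alpha}\lesssim\mu^{-(1-2\alpha)}$. This is exactly where $\alpha<\frac12$ enters: the power $1-2\alpha$ is positive, so it produces a small factor, and the effective sensitivity $\chi^2 u/v^{2\alpha}$ becomes small as $\mu\to\infty$.

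Third, once the comparison holds, the Lyapunov argument above closes the $L^2$ decay.

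\textbf{From $L^2$ to $L^\infty$.} By the non-singular $(u,z)$ system, $u$ is bounded and uniformly H\"older continuous on $[t_0,\infty)$. Since $\int_{t_0}^\infty\|u-\bar u\|_{L^2(\Omega)}^2dt<\infty$, we get $\|u-\bar u\|_{L^2(\Omega)}\to0$, and Gagliardo--Nirenberg interpolation with the H\"older bound upgrades this to $\|u-\bar u\|_{L^\infty(\Omega)}\to0$. For $v$, elliptic regularity applied to $-\Delta(v-\bar u)+(v-\bar u)=u-\bar u$ gives $\|v-\bar u\|_{L^\infty(\Omega)}\to0$. This proves \eqref{th21}.

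\textbf{Exponential rate for $\alpha<\frac{n+2}{n^2+4}$.} Here the bound on $\int_\Omega|\nabla u|^{q_0}dx$ with $q_0>n$ gives a uniform $W^{1,q_0}$ bound. Since $u\to\bar u$ uniformly, a Taylor expansion shows $\mathcal{E}(t)\le C\int_\Omega(u-\bar u)^2dx$ for large $t$, so $\mathcal{E}'\le-\eta\mathcal{E}$ and $\mathcal{E}$ decays exponentially, with $\eta$ depending only on $\mu\bar u\sim r$. Then $\|u-\bar u\|_{L^2(\Omega)}$ decays exponentially, and interpolating with the $W^{1,q_0}$ bound (Gagliardo--Nirenberg) gives exponential decay in $L^\infty$ with a possibly smaller $\eta_\star$. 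Elliptic regularity transfers this decay to $v$, and enlarging $L_\star$ to cover finite times gives \eqref{th22}.
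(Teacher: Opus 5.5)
Your outer architecture matches the paper's: the functional $\int_\Omega(u-\bar u-\bar u\ln\frac{u}{\bar u})dx$, time-integrability of $\|u-\bar u\|_{L^2(\Omega)}^2$, the upgrade via H\"older regularity of the $(u,z)$ system, and the exponential rate via Taylor expansion plus Gagliardo--Nirenberg with the $W^{1,q_0}$ bound. The gap is in the comparison $\int_\Omega\frac{|\nabla v|^2}{v^{2\alpha}}dx\le\frac{1}{\chi^2}\int_\Omega\frac{|\nabla u|^2}{u^2}dx$, which is the heart of the theorem; it does not go through as you describe.

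\textbf{Why the stated mechanism fails.} The upper bound $v^{1-2\alpha}\lesssim\mu^{-(1-2\alpha)}$ makes $v^{-2\alpha}$ larger, not smaller. With no positive lower bound for $v$ (avoiding one is the point of the paper), $u/v^{2\alpha}$ is not small. After Young's inequality you must absorb $\int_\Omega u^2v^{-4\alpha}|\nabla v|^2dx$. Since $u^2v^{-4\alpha}|\nabla v|^2=u\,v^{1-2\alpha}\cdot\frac{u}{v}\cdot\frac{|\nabla v|^2}{v^{2\alpha}}$, the small factor $\|u\|_{L^\infty(\Omega)}\|v\|_{L^\infty(\Omega)}^{1-2\alpha}$ is useless unless the left-hand side contains the good term $\int_\Omega\frac{u}{v}\frac{|\nabla v|^2}{v^{2\alpha}}dx$.

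\textbf{Why your test does not create that term.} Convexity only disposes of the boundary integral $\int_{\partial\Omega}v^{-2\alpha}\partial_\nu|\nabla v|^2$. The interior term $2\alpha\int_\Omega v^{-2\alpha-1}\nabla v\cdot D^2v\nabla v\,dx$, generated by $\nabla v^{-2\alpha}$, survives. Absorbing it into $\int_\Omega v^{-2\alpha}|D^2v|^2dx$ leaves $+\alpha^2\int_\Omega v^{-2\alpha-2}|\nabla v|^4dx$, which you do not control. Integrating it by parts with $\Delta v=v-u$ instead puts $+\alpha\int_\Omega\frac uv\frac{|\nabla v|^2}{v^{2\alpha}}dx$ and $+\alpha(2\alpha+1)\int_\Omega v^{-2\alpha-2}|\nabla v|^4dx$ on the right, both with the wrong sign.

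\textbf{How the paper gets the good term.} In Lemma \ref{lem53} the paper applies $2\nabla z\cdot\nabla$ to the $z$-equation, i.e.\ integrates \eqref{4110} and uses $\int_\Omega\Delta|\nabla z|^2dx\le0$ from convexity.
\begin{itemize}
\item The term $\frac{\alpha}{1-\alpha}\frac{|\nabla z|^2}{z}$ contributes $-\frac{2\alpha}{1-\alpha}\int_\Omega\frac{|\nabla z|^4}{z^2}dx$. After Young on $\frac{2\alpha}{1-\alpha}\int_\Omega\frac{\nabla z\cdot\nabla|\nabla z|^2}{z}dx$, the quartic terms carry the coefficient $\frac{2\alpha(2\alpha-1)}{(1-\alpha)^2}\le0$.
\item Expanding $\nabla(uz^{-\frac{\alpha}{1-\alpha}})$ produces $-2\alpha\int_\Omega u\frac{|\nabla z|^2}{z^{1/(1-\alpha)}}dx=-2\alpha(1-\alpha)^2\int_\Omega\frac uv\frac{|\nabla v|^2}{v^{2\alpha}}dx$, which is the needed good term.
\item Then $u^2z^{-\frac{2\alpha}{1-\alpha}}\le\|u\|_{L^\infty(\Omega)}^{2(1-\alpha)}(\frac uv)^{2\alpha}$, with $\|u\|_{L^\infty(\Omega)}\le\frac{L_0}{\mu}$ for $t>t_0$. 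Young's inequality with exponents $\frac{1}{2\alpha}$ and $\frac{1}{1-2\alpha}$ leaves only $C_{47}\mu^{-\frac{2(1-\alpha)}{1-2\alpha}}\int_\Omega|\nabla z|^2dx$, which is absorbed for $\mu>\mu_\star$.
\end{itemize}
The quartic coefficient and this Young step are the two places in the lemma where $\alpha<\frac12$ enters.

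\textbf{A repair within your $v$-framework.} Test $-\Delta v+v=u$ against $-\nabla\cdot(v^{-2\alpha}\nabla v)$, and substitute $\Delta v=v-u$ in the resulting term $-2\alpha\int_\Omega v^{-2\alpha-1}|\nabla v|^2\Delta v\,dx$. This yields
\[
\int_\Omega\frac{|\Delta v|^2}{v^{2\alpha}}dx+(1-2\alpha)\int_\Omega\frac{|\nabla v|^2}{v^{2\alpha}}dx+2\alpha\int_\Omega\frac{u}{v}\frac{|\nabla v|^2}{v^{2\alpha}}dx=\int_\Omega\frac{\nabla u\cdot\nabla v}{v^{2\alpha}}dx.
\]
With this identity, your factor $\|u\|_{L^\infty(\Omega)}\|v\|_{L^\infty(\Omega)}^{1-2\alpha}\le(L_0/\mu)^{2-2\alpha}$ does close the comparison for large $\mu$.
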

\begin{remark}\label{r1}
{\rm Recall from \cite{CWY2016,LL2021,ZMT2025,K2026} that the decreasing property in time of the functional $\int_\Omega (u-\frac{r}{\mu}-\frac{r}{\mu}\ln\frac{\mu u}{r})dx$ is crucial to obtain the long-time behavior of classical solution, which can be arrived via controlling $\int_\Omega \frac{|\nabla v|^2}{v^{2\alpha}}dx$ with $\alpha\in(0,1]$ by $\int_\Omega (u-a)^2dx$ with the aid of the uniformly positive bound for $v$ from below. While, we will deal with $\int_\Omega \frac{|\nabla v|^2}{v^{2\alpha}}dx$ for $\alpha\in(0,\frac{1}{2})$ by $\int_\Omega \frac{|\nabla u|^2}{u^2}dx$ based on the qualitative estimate of upper bound of $u(\cdot,t)$ with respected on the coefficient $\mu$ after some time, and then obtain the desired long-time behavior of solutions to \eqref{p}. }
\end{remark}

\section{Preliminaries}
Let $\tilde\mu_0$ be defined in \eqref{c1}. Then we will obtain the global boundedness of classical solution of \eqref{p} via the similar arguments like that in \cite{LK2025} for $n\ge 3$ with $\alpha\in(0,1)$ which is also valid for $n=2$. To avoid repetition, we give it without detailed proof.

\begin{lemma}\label{lem11}
If $n\ge 2$ and $\alpha\in(0,1)$ with $\chi,r>0$, then for $\mu>\tilde\mu_0$ the problem \eqref{p} possesses a uniquely global bounded classical solution, i.e., there exists some $L_1=L_1(\chi,\alpha,r,\mu,\Omega)>0$ such that
\begin{align}\label{lem111}
\|u\|_{L^\infty(\Omega)}\le L_1,~~t>0.
\end{align}
\end{lemma}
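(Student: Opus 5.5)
Local existence and uniqueness come first. They follow from the standard Schauder fixed-point argument, as in \cite{Z2023,LK2025}. Since $u_0>0$, the strong maximum principle applied to the elliptic equation gives $v>0$ in $\bar\Omega\times(0,T_{\max})$, so the singular factor $v^{-\alpha}$ is harmless on every compact time interval. This yields the extensibility criterion: if $T_{\max}<\infty$, then $\|u(\cdot,t)\|_{L^\infty(\Omega)}\to\infty$ as $t\nearrow T_{\max}$. It therefore suffices to bound $\|u\|_{L^p(\Omega)}$ for some $p>\frac n2$ uniformly in time, and then conclude with a Moser--Alikakos iteration, or equivalently with semigroup estimates for $\nabla v$.

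For the $L^p$ bound, test the first equation against $u^{p-1}$ and integrate the cross-diffusion term by parts:
\begin{align*}
\frac1p\frac{d}{dt}\int_\Omega u^p+(p-1)\int_\Omega u^{p-2}|\nabla u|^2
=\chi(p-1)\int_\Omega \frac{u^{p-1}}{v^\alpha}\nabla u\cdot\nabla v+r\int_\Omega u^p-\mu\int_\Omega u^{p+1}.
\end{align*}
I would rewrite the chemotactic integral as $\frac{\chi(p-1)}{p}\int_\Omega v^{-\alpha}\nabla u^p\cdot\nabla v$ and integrate by parts a second time. This produces
\begin{align*}
-\frac{\chi(p-1)}{p}\int_\Omega u^p v^{-\alpha}\Delta v+\frac{\chi\alpha(p-1)}{p}\int_\Omega u^pv^{-\alpha-1}|\nabla v|^2 .
\end{align*}
In the first term, $-\Delta v=u-v$, and the piece $-\int u^pv^{-\alpha}v\le0$ can be discarded. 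What remains is $\int_\Omega u^{p+1}v^{-\alpha}$. By Young's inequality this is at most $\varepsilon\int_\Omega u^{p+1}v^{-1}\cdot(\cdots)$ plus similar pieces. The cleaner route is to use $v^{-\alpha}\le 1+v^{-1}$ together with the weight structure from \cite{LK2025}, namely the functional $\int_\Omega u^p v^{-\beta}$ with a suitably chosen $\beta$. Differentiating this weighted functional produces the terms $\int u^{p+1}v^{-\beta-\alpha}$ and $\int u^pv^{-\beta-1-\alpha}|\nabla v|^2$. The good dissipation term is $\int u^{p-2}v^{-\beta}|\nabla u|^2$. A further positive contribution, $\int u^{p+1}v^{-\beta}$, comes from the identity $-\Delta v=u-v$ via the $\beta$-derivative, since $v_t$ is expressed elliptically.

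The main obstacle is closing the estimate. The gradient term $\int u^pv^{-\beta-1-\alpha}|\nabla v|^2$ must be absorbed by the diffusion and weight terms. The term $\chi\int u^{p+1}v^{-\beta-\alpha}$ must be absorbed by $\mu\int u^{p+1}v^{-\beta}$. For the latter I would apply Young with exponents $\frac1\alpha$ and $\frac1{1-\alpha}$:
\[
\chi u^{p+1}v^{-\beta-\alpha}\le \eta\, u^{p+1}v^{-\beta-1}+C_\eta\,\chi^{\frac1{1-\alpha}}u^{p+1}v^{-\beta}.
\]
This explains where the power $\chi^{1/(1-\alpha)}$ in $\tilde\mu_0$ comes from. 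The remaining term $\eta\int u^{p+1}v^{-\beta-1}$ is absorbed using the quadratic form in $(\nabla u/u,\nabla v/v)$. That form is negative definite precisely when $p$ and $\beta$ are tuned; I would take $p$ slightly larger than $\frac n2$, which accounts for the factor $n/2$ in the definition of $\tilde\mu_0$. Convexity of $\Omega$ enters only if boundary terms $\partial_\nu|\nabla v|^2\le0$ appear. Once $\mu>\tilde\mu_0$, we obtain the inequality $\frac{d}{dt}\int u^pv^{-\beta}+\int u^pv^{-\beta}\le C$.

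Finally, the factor $v^{-\beta}$ has to be removed to reach a plain $L^{p'}$ bound. Since $v\le C\|u\|_{L^1}$-type bounds hold in $W^{1,q}$ for $q<\frac n{n-1}$, and $\int u\le \max\{\int u_0,\frac{r^2|\Omega|}{\mu}\}$ by the logistic ODE, Hölder's inequality gives $\int u^{p'}\le (\int u^pv^{-\beta})^{p'/p}(\int v^{\beta p'/(p-p')})^{\cdots}$ with $p'>\frac n2$ still available. This completes the $L^{p'}$ bound and hence, via the iteration above, the bound \eqref{lem111}.
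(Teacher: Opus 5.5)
Your architecture (local theory with an extensibility criterion, a uniform $L^p$ bound, then $L^\infty$) is fine, and so is your first computation: testing with $u^{p-1}$ and integrating by parts twice gives exactly the two critical terms $\frac{\chi(p-1)}{p}\int_\Omega u^{p+1}v^{-\alpha}dx$ and $\frac{\chi\alpha(p-1)}{p}\int_\Omega u^pv^{-1-\alpha}|\nabla v|^2dx$ of \eqref{222}. The argument breaks when you switch to the weighted functional $\int_\Omega u^pv^{-\beta}dx$. The $v$-equation in \eqref{p} is elliptic, so $\frac{d}{dt}\int_\Omega u^pv^{-\beta}dx$ contains $-\beta\int_\Omega u^pv^{-\beta-1}v_t\,dx$ with $v_t=(I-\Delta)^{-1}u_t$ nonlocal. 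The relation $-\Delta v=u-v$ gives no local formula for this term, since $\Delta v-v+u\equiv 0$. Written out, $v_t=(1+r)v-u-\chi(I-\Delta)^{-1}\nabla\cdot(uv^{-\alpha}\nabla v)-\mu(I-\Delta)^{-1}u^2$. So for $\beta>0$ you pick up $+\beta\int_\Omega u^{p+1}v^{-\beta-1}dx$ and $+\beta\mu\int_\Omega u^pv^{-\beta-1}(I-\Delta)^{-1}u^2dx$ with the wrong sign, plus a nonlocal chemotactic term, and your sketch controls none of them. For $\beta<0$, removing the weight at the end would need control of negative powers of $v$, which is exactly what is unavailable here. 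The helpful contribution you expect from the $\beta$-derivative exists only in the parabolic--parabolic system, where $v_t=\Delta v-v+u$.

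Even without the weight, the closing step is missing its key ingredient. Apply Young's inequality in the form $v^{-\alpha}\le\varepsilon v^{-1}+C_\varepsilon$ and $v^{-1-\alpha}\le\varepsilon v^{-2}+C_\varepsilon v^{-1}$. The singular parts $\int_\Omega u^{p+1}v^{-1}dx$ and $\int_\Omega u^p|\nabla v|^2v^{-2}dx$ are then absorbed by the diffusion through \eqref{lem132} (test the $v$-equation with $u^p/v$). The remainder $C_\varepsilon\int_\Omega u^p|\nabla v|^2v^{-1}dx$, however, is not dominated by any dissipation, and no tuning of a quadratic form in $(\nabla u/u,\nabla v/v)$ removes it. As in \cite{LK2025} and the proof of Lemma~\ref{lem22}, one bounds it by H\"older's inequality and the elliptic estimate \eqref{131} of \cite{K2025}, used in the form \eqref{lem133} at exponent $p+1$. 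This turns it into roughly $C_5L_2^{\frac{1}{p+1}}\int_\Omega u^{p+1}dx$ plus $L^1$-terms. Together with $C_4\int_\Omega u^{p+1}dx$ from the first term, it must be beaten by $\mu\int_\Omega u^{p+1}dx$ for some $p_1>\max\{2,\frac n2\}$. This is precisely where $\tilde\mu_0$ in \eqref{c1} comes from: it is what the condition $C_4+C_5L_2^{\frac{1}{p_1+1}}<\mu$ becomes as $p_1\downarrow\max\{2,\frac n2\}$, which also explains the separate formula for $n=2,3$. In particular, its second summand, the one with $(\chi\alpha)^{\frac{1}{1-\alpha}}$, is produced by this gradient term. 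Your sketch never produces a threshold, so it does not prove the lemma for $\mu>\tilde\mu_0$.

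The fix is to stay with the unweighted $\int_\Omega u^pdx$ and argue as in \eqref{222}--\eqref{226}; the weight-removal step then disappears. Be aware also that the passage from $L^{p}$ with $p>\frac n2$ to $L^\infty$ is not the routine bounded-sensitivity argument. The flux $uv^{-\alpha}\nabla v$ involves $v^{-\alpha}$ with no lower bound on $v$, so the singular terms have to be treated again via \eqref{lem132} and \eqref{lem133}, as in Lemma~\ref{lem23}.
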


Now, we give some fundamental estimates on the global bounded solution $(u,v)$ as follows.
\begin{lemma}\label{lem13}
It holds that
\begin{align}\label{lem131}
\int_\Omega \frac{u}{v}dx+\int_\Omega \frac{|\nabla v|^2}{v^2}dx=|\Omega|,
\end{align}
and for $p>1$ that
\begin{align}\label{lem132}
\frac{1}{2}\int_\Omega u^p\frac{|\nabla v|^2}{v^2}dx+\int_\Omega \frac{u^{p+1}}{v}dx\le 2\int_\Omega|\nabla u^\frac{p}{2}|^2dx+\int_\Omega u^pdx.
\end{align}
In addition, if $p\ge 3$, for $\epsilon_1>0$ there exists some $L_3=L_3(p,\Omega)>0$ such that
\begin{align}\label{lem133}
\int_\Omega \frac{|\nabla v|^{2p}}{v^p}dx\le (L_2+\epsilon_1)\int_\Omega u^pdx+\frac{L_3}{\epsilon_1^{p-1}}\big(\int_\Omega udx\big)^p,
\end{align}
where $L_2=\big(\frac{4(p+1)^2}{p-2}\big)^p\big(\frac{2(p-1)}{p-2}\big)^\frac{p}{2}$.
\end{lemma}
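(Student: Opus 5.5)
For \eqref{lem131}, I would test the elliptic equation $0=\Delta v-v+u$ against $\frac{1}{v}$, which is admissible because the strong maximum principle gives $v>0$ in $\bar\Omega$ for $t>0$. Integrating by parts with $\partial_\nu v=0$ gives $\int_\Omega\frac{\Delta v}{v}dx=\int_\Omega\frac{|\nabla v|^2}{v^2}dx$. Hence
\[0=\int_\Omega\frac{|\nabla v|^2}{v^2}dx-|\Omega|+\int_\Omega\frac{u}{v}dx,\]
which is exactly \eqref{lem131}.

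For \eqref{lem132}, I would test the $v$-equation against $\frac{u^p}{v}$:
\[\int_\Omega \frac{u^{p+1}}{v}dx=\int_\Omega u^pdx-\int_\Omega\frac{u^p\Delta v}{v}dx .\]
Integrating by parts,
\[-\int_\Omega\frac{u^p\Delta v}{v}dx=\int_\Omega\frac{\nabla u^p\cdot\nabla v}{v}dx-\int_\Omega u^p\frac{|\nabla v|^2}{v^2}dx .\]
Moving the last term to the left, the identity becomes
\[\int_\Omega u^p\frac{|\nabla v|^2}{v^2}dx+\int_\Omega\frac{u^{p+1}}{v}dx=\int_\Omega u^pdx+\int_\Omega\frac{\nabla u^p\cdot\nabla v}{v}dx.\]
Next I write $\nabla u^p=2u^{p/2}\nabla u^{p/2}$ and apply Young's inequality:
\[2\int_\Omega u^{p/2}\nabla u^{p/2}\cdot\frac{\nabla v}{v}dx\le\frac12\int_\Omega u^p\frac{|\nabla v|^2}{v^2}dx+2\int_\Omega|\nabla u^{p/2}|^2dx.\]
Absorbing the first term on the right into the left-hand side gives \eqref{lem132}. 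No convexity is needed at this stage.

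For \eqref{lem133}, the natural route is the Winkler-type weighted estimate for Neumann functions on convex domains. For $v>0$ with $\partial_\nu v=0$, integrating by parts and using $\partial_\nu|\nabla v|^2\le0$ on $\partial\Omega$ (this is where convexity enters) gives
\[\int_\Omega\frac{|\nabla v|^{2p}}{v^{p}}dx\le C(p)\int_\Omega v^{p}|D^2\ln v|^{p}dx,\]
with explicit constants of the form $(p+1)^2/(p-2)$ for $p>2$; this matches the structure of $L_2$. Then I would compute
\[D^2\ln v=\frac{D^2v}{v}-\frac{\nabla v\otimes\nabla v}{v^2}\]
and use the pointwise bound $|D^2v|^2\ge\frac1n(\Delta v)^2$, or better an integrated $L^p$ Hessian estimate. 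Via $\Delta v=v-u$ this reduces $\int_\Omega v^p|D^2\ln v|^p dx$ to $\int_\Omega|\Delta v|^p dx$ plus a portion of $\int_\Omega\frac{|\nabla v|^{2p}}{v^p}dx$ that can be absorbed when $p\ge3$. The constant $\big(\frac{2(p-1)}{p-2}\big)^{p/2}$ presumably comes from this absorption or Hölder step. The remaining pieces satisfy $\|\Delta v\|_{L^p}\le\|u\|_{L^p}+\|v\|_{L^p}$ and $\|v\|_{L^p}\le\|u\|_{L^p}$ (testing the $v$-equation by $v^{p-1}$).

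Tracking constants precisely then yields a coefficient slightly above $L_2$ in front of $\int_\Omega u^p dx$. The lower-order contributions are handled by interpolation: the Gagliardo–Nirenberg inequality gives $\|v\|_{L^p}\le\delta\|v\|_{W^{2,p}}+C_\delta\|v\|_{L^1}$, and $\|v\|_{L^1}=\|u\|_{L^1}$. Young's inequality then splits these into $\epsilon_1\int_\Omega u^pdx+L_3\epsilon_1^{-(p-1)}\|u\|_{L^1}^p$.

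I expect the main obstacle to be the convex-domain weighted Hessian inequality, specifically keeping the constants explicit enough to produce exactly $L_2$ while leaving room to absorb the $|\nabla v|^{2p}/v^p$ terms generated by $D^2\ln v$. I would first attempt a direct integration by parts of $\int_\Omega v^{-p}|\nabla v|^{2p}dx$, writing it as $\int_\Omega v^{1-p}|\nabla v|^{2p-2}\nabla v\cdot\frac{\nabla v}{v}dx$ and moving one derivative onto the rest.
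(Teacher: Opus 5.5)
Your proofs of \eqref{lem131} and \eqref{lem132} are correct. For \eqref{lem132} you give a self-contained derivation (test the $v$-equation with $u^p/v$, then apply Young), whereas the paper just cites \cite[Lemma 2.2]{LK2025}.

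The gap is in \eqref{lem133}. A bound $\int_\Omega\frac{|\nabla v|^{2p}}{v^p}dx\le C\int_\Omega u^pdx+\dots$ with \emph{some} constant is routine: your direct integration by parts plus $W^{2,p}$ elliptic regularity gives it. What the statement asserts, and what Lemma~\ref{lem22} later needs to define $\tilde\mu_0$ and $p_1$, is the specific leading constant $L_2$, which depends only on $p$. Your sketch does not produce it, and neither of your routes can.

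The route through $D^2\ln v$ is circular. Integration by parts (which uses only $\partial_\nu v=0$, not convexity) gives
\[
\int_\Omega\frac{|\nabla v|^{2p}}{v^p}dx\le\Big(\frac{2p-2+\sqrt n}{p}\Big)^p\int_\Omega v^p|D^2\ln v|^pdx .
\]
Since $vD^2\ln v=D^2v-\frac{\nabla v\otimes\nabla v}{v}$, any splitting of the right-hand side returns $\int_\Omega\frac{|\nabla v|^{2p}}{v^p}dx$ with coefficient at least $\big(\frac{2p-2+\sqrt n}{p}\big)^p>1$, so nothing can be absorbed.

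The direct integration by parts you mention at the end gives $\int_\Omega\frac{|\nabla v|^{2p}}{v^p}dx\le\big(\frac{2p-2+\sqrt n}{p-1}\big)^p\int_\Omega|D^2v|^pdx$. Then you must bound $\|D^2v\|_{L^p(\Omega)}$ by $\|\Delta v\|_{L^p(\Omega)}$. The pointwise inequality $|\Delta v|^2\le n|D^2v|^2$ goes the wrong way. For $p\neq2$ the $L^p$ Hessian bound is a Calder\'on--Zygmund estimate with a non-explicit constant depending on $n,p,\Omega$. So you end with $C(n,p,\Omega)$ in front of $\int_\Omega u^pdx$, not $L_2+\epsilon_1$. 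The sentence ``tracking constants precisely then yields a coefficient slightly above $L_2$'' is unsupported.

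The missing ingredient, which the paper imports from \cite[Proposition 3.1]{K2025}, is the weighted estimate
\[
\int_\Omega\frac{|\nabla v|^{2p}}{v^k}dx\le C_1\int_\Omega\frac{u^p}{v^{k-p}}dx+C_2\int_\Omega v^{2p-k}dx,\qquad C_1=\Big(\frac{4(p-1)^2}{2p-k-2}\Big)^p\Big(\frac{2(k-1)}{k-2}\Big)^{\frac p2},
\]
valid for $p\ge3$ and $k\in(2,2p-2)$, with $C_2=C_2(p,k,\Omega)$. The paper uses it with $k=p$, so $C_1\le L_2$; the paper's $L_2$ has $(p+1)^2$ where $C_1$ has $(p-1)^2$, which only weakens the bound. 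That is where the constant you tried to reverse-engineer comes from.

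If you want to prove such an estimate yourself, never bound $\Delta v$ by $|D^2v|$. After integrating by parts, substitute $\Delta v=v-u$; this is what produces the $u^p$ term. Control $\nabla v\cdot D^2v\,\nabla v=\frac12\nabla v\cdot\nabla|\nabla v|^2$ by testing the identity $\frac12\Delta|\nabla v|^2=\nabla v\cdot\nabla\Delta v+|D^2v|^2$ against a weight in $v$ and $|\nabla v|$. That is the natural place for the convexity of $\Omega$ to enter, through $\partial_\nu|\nabla v|^2\le0$.

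The lower-order term $C_2\int_\Omega v^pdx$ is then handled as in the paper. Testing the $v$-equation with $v^{p-1}$ gives $\frac{4(p-1)}{p^2}\int_\Omega|\nabla v^{\frac p2}|^2dx\le\int_\Omega u^pdx$. Ehrling's lemma, together with interpolating $\|v\|_{L^{p/2}(\Omega)}$ between $L^1$ and $L^p$, then produces exactly the factor $\epsilon_1^{-(p-1)}$. Your Gagliardo--Nirenberg step between $W^{2,p}$ and $L^1$ would instead give $\epsilon_1^{-\frac{n(p-1)}{2p}}$, which does not have the stated form when $n>2p$.
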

{\bf Proof.}\
Multiplying \eqref{p}$_2$ by $\frac{1}{v}$ and integrating over $\Omega$ by part, we get
\begin{align*}
0&=\int_\Omega \frac{1}{v}(\Delta v-v+u)dx=\int_\Omega \frac{|\nabla v|^2}{v^2}dx+\int_\Omega \frac{u}{v}dx-|\Omega|,
\end{align*}
which concludes \eqref{lem131}. \eqref{lem132} is the direct result established in \cite[Lemma 2.2]{LK2025}. If $p\ge 3$ and $k\in(2,2p-2)$, recall \cite[Proposition 3.1]{K2025} to know
\begin{align}\label{131}
\int_\Omega \frac{|\nabla v|^{2p}}{v^k}dx\le C_1\int_\Omega \frac{u^p}{v^{k-p}}dx+C_2\int_\Omega v^{2p-k}dx,
\end{align}
with $C_1=C_1(p,k)=\big(\frac{4(p-1)^2}{2p-k-2}\big)^p\big(\frac{2(k-1)}{k-2}\big)^\frac{p}{2}$ and some $C_2=C_2(p,k,\Omega)>0$. By the Young inequality with \eqref{p}$_2$, we get for $p>1$ that
\begin{align}\label{132}
0&=-(p-1)\int_\Omega v^{p-2}|\nabla v|^2dx-\int_\Omega v^pdx+\int_\Omega uv^{p-1}dx\nonumber\\
&\le -\frac{4(p-1)}{p^2}\int_\Omega|\nabla v^\frac{p}{2}|^2dx +\int_\Omega u^pdx.
\end{align}
In addition, if $p>2$, it is known by Ehrling's Lemma and the Young inequality that for $\epsilon_0>0$ there exists some $C_3=C_3(\Omega)>0$ such that
\begin{align*}
\int_\Omega v^pdx&\le \epsilon_0\int_\Omega |\nabla v^\frac{p}{2}|^2dx+\frac{C_3}{\epsilon_0}\big(\int_\Omega v^\frac{p}{2}dx\big)^2\\
&\le\epsilon_0\int_\Omega |\nabla v^\frac{p}{2}|^2dx+ \frac{1}{2}\int_\Omega v^pdx+\frac{2^{p-2}C_3^{p-1}}{\epsilon_0^{p-1}}\big(\int_\Omega vdx\big)^p.
\end{align*}
which along with $\int_\Omega vdx\le \int_\Omega udx$ entails that
\begin{align}\label{133}
\int_\Omega v^pdx\le 2\epsilon_0\int_\Omega |\nabla v^\frac{p}{2}|^2dx+\frac{(2C_3)^{p-1}}{\epsilon_0^{p-1}}\big(\int_\Omega udx\big)^p.
\end{align}
Then, we obtain by \eqref{131} for $k=p$ with \eqref{132} and \eqref{133} that
\begin{align*}
\int_\Omega \frac{|\nabla v|^{2p}}{v^p}dx&\le L_2\int_\Omega {u^p}dx+C_2\int_\Omega v^{p}dx\\
&\le (L_2+\frac{C_2p^2}{2(p-1)}\epsilon_0)\int_\Omega u^pdx+\frac{C_2(2C_3)^{p-1}}{\epsilon_0^{p-1}}\big(\int_\Omega udx\big)^p ,
\end{align*}
where $L_2=\big(\frac{4(p+1)^2}{p-2}\big)^p\big(\frac{2(p-1)}{p-2}\big)^\frac{p}{2}$. This concludes \eqref{lem133} with $\epsilon_1=\frac{C_2p^2}{2(p-1)}\epsilon_0$ and some $L_3=L_3(p,\Omega)>0$. Consequently, the proof is complete.   \qquad$\Box$\medskip

\section{Boundedness of $u$ with respect to $\mu$}
This section will establish the qualitative estimate of $\|u\|_{L^\infty(\Omega)}$ with respect to $\mu$. It should be mentioned here that the constants expressed in this section are all independent of $\mu$. Now, we deal with $\|u\|_{L^1(\Omega)}$.
\begin{lemma}\label{lem21}
If $\alpha\in(0,1)$ and $\mu>\tilde\mu_0$, then there exists some $t_1>0$ such that
\begin{align}\label{lem211}
\|u\|_{L^1(\Omega)}\le \frac{2r}{\mu},~~t>t_1.
\end{align}
\end{lemma}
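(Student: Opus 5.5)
Integrating the first equation of \eqref{p} over $\Omega$, the chemotactic term and the Laplacian vanish by the Neumann boundary conditions. Setting $y(t):=\int_\Omega u\,dx$, we obtain
\begin{align*}
y'(t)=r\,y(t)-\mu\int_\Omega u^2dx .
\end{align*}
The plan is to turn this into a closed Bernoulli-type differential inequality for $y$ and then compare with the explicit solution of the corresponding logistic ODE. The classical solution exists globally and $u$ is positive by Lemma \ref{lem11}, since $\mu>\tilde\mu_0$. Hence $y$ is a $C^1$ function on $(0,\infty)$, continuous at $t=0$, with $y(0)=\int_\Omega u_0dx>0$.

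To close the inequality we use the Cauchy--Schwarz inequality in the form $\int_\Omega u^2dx\ge \frac{1}{|\Omega|}y^2$. This gives
\begin{align*}
y'\le r y-\frac{\mu}{|\Omega|}y^2,\qquad t>0.
\end{align*}
By the comparison principle for ODEs, $y(t)\le \bar y(t)$, where $\bar y$ solves the logistic equation $\bar y'=r\bar y-\frac{\mu}{|\Omega|}\bar y^2$ with $\bar y(0)=y(0)$. This solution is explicit:
\begin{align*}
\bar y(t)=\frac{r|\Omega|/\mu}{1+\big(\frac{r|\Omega|}{\mu y(0)}-1\big)e^{-rt}},
\end{align*}
and it converges to $\frac{r|\Omega|}{\mu}$ as $t\to\infty$. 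Consequently $\limsup_{t\to\infty}y(t)\le \frac{r|\Omega|}{\mu}$, so there is $t_1>0$ with $y(t)\le \frac{2r|\Omega|}{\mu}$ for $t>t_1$. Alternatively, one can avoid the explicit formula: whenever $y>\frac{2r|\Omega|}{\mu}$ we have $y'\le -ry$, so $y$ decays exponentially until it enters this region and can never leave it afterwards.

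The main obstacle is normalization. The statement reads $\|u\|_{L^1}\le\frac{2r}{\mu}$ with no factor $|\Omega|$, and the Cauchy--Schwarz step unavoidably produces that factor. I will therefore first check whether the paper normalizes $|\Omega|=1$ or tacitly absorbs $|\Omega|$ into the constant. If neither, the honest conclusion is $\|u\|_{L^1(\Omega)}\le \frac{2r|\Omega|}{\mu}$, which equals the stated bound when $|\Omega|\le 1$ and suffices for the later $\mu$-dependent estimates in any case, since only the $\frac{1}{\mu}$ scaling with a $\mu$-independent constant matters there.

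The time $t_1$ depends on $y(0)$ and $\mu$. This is harmless because the lemma only asserts the existence of some $t_1$; the constant $2r$, by contrast, is independent of $\mu$, as required for this section.
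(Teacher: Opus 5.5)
Your argument is essentially the paper's: integrate the $u$-equation, use H\"older (Cauchy--Schwarz) to get $y'\le ry-\frac{\mu}{|\Omega|}y^2$, and compare with the logistic (Bernoulli) ODE. Your normalization concern is justified. The paper does not normalize $|\Omega|$ and simply drops the factor when it writes $\limsup_{t\to\infty}\|u\|_{L^1(\Omega)}\le\frac{r}{\mu}$. The literal statement in fact fails for $|\Omega|>2$: for constant $u_0$ one has $v\equiv u$ and $u(t)\to\frac{r}{\mu}$, hence $\|u\|_{L^1(\Omega)}\to\frac{r|\Omega|}{\mu}$. So your bound $\frac{2r|\Omega|}{\mu}$ is the correct conclusion, and, as you note, it is all that the later $\mu$-dependent estimates need.
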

{\bf Proof.}\
Integrate \eqref{p}$_1$ over $\Omega$ by part to have
\begin{align*}
\frac{d}{dt}\int_\Omega udx&\le r\int_\Omega udx-\mu \int_\Omega u^2 dx \le r\int_\Omega udx -\frac{\mu}{|\Omega|}\Big(\int_\Omega udx\Big)^2,~~t>0
\end{align*}
by the H\"{o}lder inequality, which along with the Bernoulli inequality yields
\begin{align*}
\limsup_{t\rightarrow\infty}\|u\|_{L^1(\Omega)}\le \frac{r}{\mu},
\end{align*}
and then concludes \eqref{lem211} with some $t_1>0$. \qquad$\Box$\medskip

Via the similar arguments like that in \cite{LK2025}, we will obtain the estimate of $\|u\|_{L^p(\Omega)}$ for some $p>\max\{\frac{n}{2},2\}$ with respect to $\mu$.

\begin{lemma}\label{lem22}
Let $\alpha\in(0,1)$ and $\mu>\mu_0$. Then there exists some $p_1>\max\{2,\frac{n}{2}\}$ and $t_2\ge t_1$ along with $L_4=L_4(\chi,\alpha,r,\Omega,p)>0$ such that for $p\in(1,p_1]$ it admits that
\begin{align}\label{lem221}
\|u\|_{L^p(\Omega)}\le \frac{L_4}{\mu},~~t>t_2.
\end{align}
\end{lemma}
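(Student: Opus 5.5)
We test \eqref{p}$_1$ against $u^{p-1}$ and run an $L^p$ energy argument in which every constant is tracked as independent of $\mu$. The starting point is the $L^1$ bound of Lemma \ref{lem21}. Integration by parts gives
\begin{align*}
\frac1p\frac{d}{dt}\int_\Omega u^pdx+\frac{4(p-1)}{p^2}\int_\Omega|\nabla u^{\frac p2}|^2dx
=(p-1)\chi\int_\Omega \frac{u^{p-1}}{v^\alpha}\nabla u\cdot\nabla v\,dx+r\int_\Omega u^pdx-\mu\int_\Omega u^{p+1}dx .
\end{align*}
Young's inequality splits the cross term into $\frac{2(p-1)}{p^2}\int_\Omega|\nabla u^{p/2}|^2dx$ plus $\frac{(p-1)\chi^2}{2}\int_\Omega u^p\frac{|\nabla v|^2}{v^{2\alpha}}dx$.

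The singular factor is handled by interpolating between the two quantities that \eqref{lem132} controls. Write $\frac{|\nabla v|^2}{v^{2\alpha}}=\big(\frac{|\nabla v|^2}{v^2}\big)^{\alpha}|\nabla v|^{2(1-\alpha)}$. Applying Young's inequality with exponents $\frac1\alpha,\frac1{1-\alpha}$ gives
\begin{align*}
\chi^2 u^p\frac{|\nabla v|^2}{v^{2\alpha}}\le \delta\, u^p\frac{|\nabla v|^2}{v^2}+C_\delta\,\chi^{\frac{2}{1-\alpha}}u^p|\nabla v|^2 .
\end{align*}
The first term is absorbed via \eqref{lem132} into $2\delta\int_\Omega|\nabla u^{p/2}|^2dx+2\delta\int_\Omega u^pdx$, with $\delta$ small relative to $\frac{1}{p}$. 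The remaining term $\int_\Omega u^p|\nabla v|^2dx$ is split by Hölder's inequality as $\|u\|_{L^{p+1}}^{p}\|\nabla v\|_{L^{2(p+1)}}^{2}$. Elliptic $W^{1,q}$ estimates from $0=\Delta v-v+u$ give $\|\nabla v\|_{L^{2(p+1)}}\le C\|u\|_{L^{q}}$ for a suitable $q<p+1$, and Gagliardo--Nirenberg with the $L^1$ bound then controls $\|u\|_{L^q}$. Alternatively, in the fashion of \cite{LK2025}, one can use $\int_\Omega u^p\frac{|\nabla v|^2}{v}dx$ together with \eqref{lem133}, which is presumably the source of the explicit constants in \eqref{c1}.

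The outcome we aim for is a differential inequality
\begin{align*}
\frac{d}{dt}\int_\Omega u^pdx+\int_\Omega u^pdx\le -(\mu-\tilde\mu_0-1)\,p\int_\Omega u^{p+1}dx+C\big(\textstyle\int_\Omega udx\big)^{p+1}+C\int_\Omega u^pdx ,
\end{align*}
with $C$ independent of $\mu$. The coefficient of $\int_\Omega u^{p+1}dx$ stays negative precisely because $\mu>\mu_0=1+\tilde\mu_0$. This forces $p$ to lie below some threshold $p_1$, determined through the explicit form of $\tilde\mu_0$ (the $n$-dependent constants there come from the choice $p\approx\frac n2$ and $p\ge3$ when $n\ge4$). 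We take $p_1>\max\{2,\frac n2\}$ with this threshold respected.

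To close, we use the surplus $\int_\Omega u^{p+1}dx$ and Young's inequality to absorb the $r\int_\Omega u^pdx$ and $C\int_\Omega u^pdx$ terms:
\begin{align*}
C\int_\Omega u^p\le \int_\Omega u^{p+1}+C^{p+1}|\Omega| .
\end{align*}
This bound alone is not of order $\mu^{-p}$, so here is where the scaling matters. By Hölder's inequality, $\int_\Omega u^{p+1}dx\ge |\Omega|^{-1/p}\big(\int_\Omega u^pdx\big)^{\frac{p+1}{p}}$, so $y(t)=\int_\Omega u^pdx$ satisfies
\begin{align*}
y'\le C_1 y-c\,\mu\, y^{\frac{p+1}{p}}+C_2\mu^{-(p+1)}\quad\text{for }t>t_1 ,
\end{align*}
where the last term uses \eqref{lem211}. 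A comparison (ODE) argument then gives $\limsup_{t\to\infty} y\le C\mu^{-p}$. Writing this out, the equilibrium satisfies $c\mu y^{1/p}\sim C_1$, i.e.\ $y^{1/p}\sim 1/\mu$, and the source term is of the same order. This yields \eqref{lem221} for $p=p_1$ after some $t_2\ge t_1$, and the range $p\in(1,p_1)$ follows by Hölder interpolation with the $L^1$ bound.

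The main obstacle is the control of $\int_\Omega u^p\frac{|\nabla v|^2}{v^{2\alpha}}dx$ without any lower bound on $v$. The constants there must be independent of $\mu$, and they must be absorbed by exactly $\mu-\tilde\mu_0$ times $\int_\Omega u^{p+1}dx$. We would try first to follow the computation of \cite{LK2025} verbatim, using \eqref{lem132}--\eqref{lem133}, and to verify that no constant depends on $\mu$.
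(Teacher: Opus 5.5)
\textbf{There is a genuine gap in your treatment of the chemotactic term.}

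Applying Young's inequality against the diffusion turns $(p-1)\chi\int_\Omega\frac{u^{p-1}}{v^\alpha}\nabla u\cdot\nabla v\,dx$ into $\frac{(p-1)\chi^2}{2}\int_\Omega u^p\frac{|\nabla v|^2}{v^{2\alpha}}dx$. Under $u\mapsto su$ this scales like $s^{p+2-2\alpha}$, which is faster than the logistic term $\mu\int_\Omega u^{p+1}dx$ when $\alpha<\frac12$. Your further splitting replaces it by $\int_\Omega u^p|\nabla v|^2dx$, which scales like $s^{p+2}$. With only the $L^1$ bound \eqref{lem211} available, this cannot be absorbed when $n\ge3$. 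Your own chain gives $\|\nabla v\|_{L^{2(p+1)}(\Omega)}\le C\|u\|_{L^q(\Omega)}$ with $\frac1q=\frac1n+\frac1{2(p+1)}$. Interpolating $L^q$ between $L^1$ and $L^{p+1}$ then yields
\begin{align*}
\int_\Omega u^p|\nabla v|^2dx\le C\|u\|_{L^{p+1}(\Omega)}^{p+2\vartheta}\|u\|_{L^1(\Omega)}^{2(1-\vartheta)},\qquad \vartheta=\frac{(2n-2)(p+1)-n}{2np}.
\end{align*}
Here $\vartheta>\frac12$ exactly when $n>2$, so the right side is a superlinear power of $\int_\Omega u^{p+1}dx$.

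The diffusion does not rescue this. For concentrating profiles $u=M\lambda^n\phi(\lambda x)$ of fixed mass, $\int_\Omega u^p|\nabla v|^2dx\sim M^{p+2}\lambda^{np+n-2}$, while $\int_\Omega u^{p+1}dx\sim M^{p+1}\lambda^{np}$ and $\int_\Omega|\nabla u^{p/2}|^2dx\sim M^{p}\lambda^{np-n+2}$. As $\lambda\to\infty$ the bad term dominates both good terms when $n\ge3$. The same computation applies to $\int_\Omega u^p\frac{|\nabla v|^2}{v^{2\alpha}}dx$ itself when $\alpha<\frac12$.

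For $n=2$ the exponent is borderline, and the smallness of $\|u\|_{L^1}$ would close the estimate. However, it would only do so above a non-explicit threshold depending on elliptic constants and $r$, not for all $\mu>\mu_0$. Your fallback via $\int_\Omega u^p\frac{|\nabla v|^2}{v}dx$ and \eqref{lem133} does not apply to your weight either: for $\alpha<\frac12$ the power $v^{-2\alpha}$ does not lie between $v^{-1}$ and $v^{-2}$. So the coefficient $-(\mu-\tilde\mu_0-1)$ is asserted rather than derived.

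\textbf{The missing step.} Instead of Young's inequality, integrate the cross term by parts once more and use $\Delta v=v-u$:
\begin{align*}
(p-1)\chi\int_\Omega \frac{u^{p-1}\nabla u\cdot\nabla v}{v^\alpha}dx=\frac{\chi(p-1)}{p}\int_\Omega\frac{u^{p+1}}{v^\alpha}dx-\frac{\chi(p-1)}{p}\int_\Omega u^pv^{1-\alpha}dx+\frac{\chi(p-1)\alpha}{p}\int_\Omega u^p\frac{|\nabla v|^2}{v^{1+\alpha}}dx.
\end{align*}
The middle term is nonpositive and can be dropped, and the other two scale like $s^{p+1-\alpha}$. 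Young's inequality now interpolates $v^{-\alpha}$ between $v^{0}$ and $v^{-1}$, and $v^{-1-\alpha}$ between $v^{-1}$ and $v^{-2}$.
\begin{itemize}
\item The pieces $\int_\Omega\frac{u^{p+1}}{v}dx$ and $\int_\Omega u^p\frac{|\nabla v|^2}{v^2}dx$ are absorbed into the diffusion by \eqref{lem132}.
\item The piece $\int_\Omega u^p\frac{|\nabla v|^2}{v}dx\le\|u\|_{L^{p+1}(\Omega)}^p\big(\int_\Omega\frac{|\nabla v|^{2(p+1)}}{v^{p+1}}dx\big)^{\frac1{p+1}}$ is controlled by \eqref{lem133}, which uses $p+1\ge3$.
\end{itemize}
What remains is $\big(C_4+C_5L_2^{\frac1{p+1}}+O(\epsilon_1)-\mu\big)\int_\Omega u^{p+1}dx$ plus a multiple of $\big(\int_\Omega u\,dx\big)^{p+1}$. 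The explicit $\tilde\mu_0$ is exactly what makes $C_4+C_5L_2^{1/(p_1+1)}<\mu$ achievable for some $p_1>\max\{2,\frac n2\}$.

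\textbf{Your closing step.} Once this is in place, your Hölder plus ODE-comparison argument with \eqref{lem211} is a valid substitute for the paper's weighted Young inequality $(r+2)\int_\Omega u^p\le\epsilon_1\mu\int_\Omega u^{p+1}+\frac{(r+2)^{p+1}|\Omega|}{(\epsilon_1\mu)^p}$. It needs the surviving $u^{p+1}$-coefficient to be at least $c\mu$ with $c$ independent of $\mu$. The ``$+1$'' in $\mu_0$ supplies this margin, whereas your coefficient $\mu-\tilde\mu_0-1$ degenerates as $\mu\downarrow\mu_0$.
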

{\bf Proof.}\
Multiply \eqref{p}$_1$ by $u^{p-1}$ with $p\ge 2$ and integrate over $\Omega$ by part to have
\begin{align}
\frac{1}{p}\frac{d}{dt}\int_\Omega u^pdx&=-(p-1)\int_\Omega u^{p-2}|\nabla u|^2dx+\chi(p-1)\int_\Omega u^{p-1}\frac{\nabla u\cdot\nabla v}{v^\alpha}dx\nonumber\\
&~~+r\int_\Omega u^pdx-\mu\int_\Omega u^{p+1}dx\nonumber\\
&\le -\frac{1}{p}\int_\Omega u^pdx-\frac{4(p-1)}{p^2}\int_\Omega |\nabla u^\frac{p}{2}|^2dx+\frac{\chi(p-1)\alpha}{p}\int_\Omega{u^{p}}\frac{|\nabla v|^2}{v^{1+\alpha}}dx\nonumber\\
&~~+\frac{\chi(p-1)}{p}\int_\Omega\frac{u^{p+1}}{v^\alpha}dx+(r+\frac{1}{p})\int_\Omega u^pdx-{\mu} \int_\Omega u^{p+1}dx,~~t>0.\label{222}
\end{align}
By the Young inequality with \eqref{lem132}, we know for $\alpha\in(0,1)$ that
\begin{align}\label{223}
\frac{\chi(p-1)}{p}\int_\Omega\frac{u^{p+1}}{v^\alpha}dx&\le \frac{p-1}{p^2}\epsilon_2\int_\Omega \frac{u^{p+1}}{v}dx+{C_4}{\epsilon_2^{-\frac{\alpha}{1-\alpha}}}\int_\Omega u^{p+1}dx\nonumber\\
&\le \frac{2(p-1)}{p^2}\epsilon_2\int_\Omega |\nabla u^\frac{p}{2}|^2dx+\frac{p-1}{p^2}\epsilon_2\int_\Omega u^pdx+C_4{\epsilon_2^{-\frac{\alpha}{1-\alpha}}}\int_\Omega u^{p+1}dx
\end{align}
with $\epsilon_2>0$ and $C_4=(\frac{\chi(p-1)}{p})^\frac{1}{1-\alpha}(\frac{p^2}{p-1})^\frac{\alpha}{1-\alpha}$. Again by the Young inequality with \eqref{lem132} and \eqref{lem133},
\begin{align}\label{224}
\frac{\chi(p-1)\alpha}{p}\int_\Omega{u^{p}}\frac{|\nabla v|^2}{v^{1+\alpha}}dx&\le \frac{p-1}{2p^2}\epsilon_3\int_\Omega u^p\frac{|\nabla v|^2}{v^2}dx+C_5\epsilon_3^{-\frac{\alpha}{1-\alpha}}\int_\Omega u^p\frac{|\nabla v|^2}{v}dx\nonumber\\
&\le \frac{2(p-1)}{p^2}\epsilon_3\int_\Omega|\nabla u^\frac{p}{2}|^2dx+\frac{p-1}{p^2}\epsilon_3\int_\Omega u^pdx\nonumber\\
&~~+C_5\epsilon_3^{-\frac{\alpha}{1-\alpha}}\big(\int_\Omega u^{p+1}dx\big)^\frac{p}{p+1}\big(\int_\Omega \frac{|\nabla v|^{2(p+1)}}{v^{p+1}}dx\big)^\frac{1}{p+1}\nonumber\\
&\le \frac{2(p-1)}{p^2}\epsilon_3\int_\Omega|\nabla u^\frac{p}{2}|^2dx+\frac{p-1}{p^2}\epsilon_3\int_\Omega u^pdx\nonumber\\
&~~+C_5\epsilon_3^{-\frac{\alpha}{1-\alpha}}\big(\int_\Omega u^{p+1}dx\big)^\frac{p}{p+1} \Big[(L_2+\epsilon_1)\int_\Omega u^{p+1}dx+\frac{L_3}{\epsilon_1^{p}}\big(\int_\Omega udx\big)^{p+1}\Big]^\frac{1}{p+1}\nonumber\\
&\le \frac{2(p-1)}{p^2}\int_\Omega|\nabla u^\frac{p}{2}|^2dx+\frac{p-1}{p^2}\int_\Omega u^pdx\nonumber\\
&~~+C_5\epsilon_3^{-\frac{\alpha}{1-\alpha}} \big((L_2+\epsilon_1)^\frac{1}{p+1}+\epsilon_1\big)\int_\Omega u^{p+1}dx\nonumber\\
&~~+ \frac{L_3C_5^{p+1}\epsilon_3^{-\frac{(p+1)\alpha}{1-\alpha}}}{\epsilon_1^{2p}}\big(\int_\Omega udx\big)^{p+1}
\end{align}
with $\epsilon_3>0$ and $C_5=(\frac{\chi\alpha (p-1)}{p})^\frac{1}{1-\alpha}(\frac{2p}{p-1})^\frac{\alpha}{1-\alpha}$. In addition, we have by the Young inequality that
\begin{align}\label{225}
(r+\frac{1}{p}+\frac{2(p-1)}{p^2})\int_\Omega u^pdx\le (r+2)\int_\Omega u^pdx\le \epsilon_1\mu\int_\Omega u^{p+1}dx+\frac{(r+2)^{p+1}|\Omega|}{(\epsilon_1\mu)^p}
\end{align}
A combination of \eqref{222}--\eqref{225} with $\epsilon_2=\epsilon_3=1$ entails that
\begin{align}\label{226}
\frac{1}{p}\frac{d}{dt}\int_\Omega u^pdx&\le -\frac{1}{p}\int_\Omega u^pdx+\Big(C_4+C_5 \big((L_2+\epsilon_1)^\frac{1}{p+1}+\epsilon_1\big)-(1-\epsilon_1)\mu\Big)\int_\Omega u^{p+1}dx\nonumber\\
&~~+\frac{L_3C_5^{p+1}}{\epsilon_1^{2p+1}}\big(\int_\Omega udx\big)^{p+1}+\frac{(r+2)^{p+1}|\Omega|}{(\epsilon_1\mu)^p},~~t>0.
\end{align}
If $\mu>\tilde\mu_0$, there exists some $p_1>\max\{2,\frac{n}{2}\big\}$ such that
$$C_4+C_5 L_2^\frac{1}{p_1+1}-\mu=\frac{\chi(p_1-1)}{p_1}(\chi p_1)^\frac{\alpha}{1-\alpha}+2(2\chi \alpha p_1)^\frac{1}{1-\alpha}(\frac{2p_1}{p_1-1})^\frac{1}{2}-\mu<0,$$
which yields that there exists some $\epsilon_1>0$ such that $C_4+C_5 \big((L_2+\epsilon_1)^\frac{1}{p_1+1}+\epsilon_1\big)-(1-\epsilon_1)\mu<0$. Hence, for $\mu>\mu_0$, we obtain by \eqref{lem211} that
\begin{align*}
\frac{1}{p_1}\frac{d}{dt}\int_\Omega u^{p_1}dx&\le -\frac{1}{p_1}\int_\Omega u^{p_1}dx+\frac{C_6}{\mu^{p_1+1}}+\frac{C_6}{\mu^{p_1}}\le -\frac{1}{p_1}\int_\Omega u^{p_1}dx+\frac{2C_6}{\mu^{p_1}},~~t>t_1
\end{align*}
with some $C_6=C_6(\chi,\alpha,r,\Omega,p_1)>0$, which along with the Bernoulli inequality entails that
\begin{align*}
\limsup_{t\rightarrow\infty}\|u\|_{L^{p_1}(\Omega)}\le \frac{(2C_6p_1)^\frac{1}{p_1}}{\mu}.
\end{align*}
This yields that there exists some $t_2\ge t_1$ such that
\begin{align*}
\|u\|_{L^{p_1}(\Omega)}\le \frac{2(2C_6p_1)^\frac{1}{p_1}}{\mu},~~t>t_2,
\end{align*}
and then concludes \eqref{lem221} by the Young inequality with $L_4=2(2C_6p_1)^\frac{1}{p_1}|\Omega|^\frac{p_1-p}{pp_1}$. \qquad$\Box$\medskip

Here, we deal with $\|u\|_{L^p(\Omega)}$ for $p>1$  with respect to $\mu$.
\begin{lemma}\label{lem23}
Let $\alpha\in(0,1)$ and $\mu>\mu_0$. Then for $p>1$ there exists some $t_3\ge t_2$ along with some $L_5=L_5(\chi,\alpha,r,\Omega,p)>0$ such that
\begin{align}\label{lem231}
\| u\|_{L^p(\Omega)}\le \frac{L_5}{\mu},~~t>t_3.
\end{align}
\end{lemma}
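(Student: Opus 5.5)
For $p\in(1,p_1]$ the statement is Lemma \ref{lem22}, so only large $p$ needs work. The plan is to bootstrap from $\|u\|_{L^{p_1}(\Omega)}\le L_4/\mu$ with $p_1>\max\{2,\frac n2\}$, using the elliptic equation to control the singular cross term in the $L^p$ testing identity, while keeping every constant independent of $\mu$. For $p<p_1$ one can also interpolate with Lemma \ref{lem21}.

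\textbf{Step 1 (testing identity).} Fix $p>p_1$ and test \eqref{p}$_1$ with $u^{p-1}$. Since $\mu>\mu_0$ and $\alpha<1$, I would not redo the sharp-constant argument of Lemma \ref{lem22}. Instead I would use the crude bound
\begin{align*}
\frac{u^{p-1}}{v^\alpha}|\nabla u||\nabla v|
\le \epsilon u^{p-2}|\nabla u|^2+C_\epsilon u^{p}\frac{|\nabla v|^2}{v^{2\alpha}}.
\end{align*}
I would then split $v^{-2\alpha}|\nabla v|^2$ by Young's inequality into $\delta\,|\nabla v|^2/v^2+C_\delta|\nabla v|^2$. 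The part $\delta\int_\Omega u^p|\nabla v|^2/v^2\,dx$ is absorbed via \eqref{lem132} into $2\delta\int_\Omega|\nabla u^{\frac p2}|^2dx+\delta\int_\Omega u^pdx$.

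\textbf{Step 2 (the non-singular remainder).} The remaining term is $\int_\Omega u^p|\nabla v|^2dx$, which contains no singularity. Since $p_1>\frac n2$, elliptic $W^{2,p_1}$ regularity for $0=\Delta v-v+u$ gives $\|\nabla v\|_{L^{q}(\Omega)}\le C\|u\|_{L^{p_1}(\Omega)}\le CL_4/\mu$ for some $q>n$, with $C$ depending only on $\Omega$. Then Hölder's inequality and the Gagliardo--Nirenberg inequality give
\begin{align*}
\int_\Omega u^p|\nabla v|^2dx\le \|\nabla v\|^2_{L^q(\Omega)}\|u^{\frac p2}\|^2_{L^{\frac{2q}{q-2}}(\Omega)}.
\end{align*}
Because $q>n$, the exponent $\frac{2q}{q-2}$ is strictly below $\frac{2n}{n-2}$. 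So this is bounded by $\eta\|\nabla u^{\frac p2}\|_{L^2(\Omega)}^2+C_\eta\|u^{\frac p2}\|^2_{L^{r}(\Omega)}$ for small $r$, say $r=2/p$, which gives a power of $\int_\Omega u\,dx$. Since $\mu>\mu_0\ge1$, the factor $(L_4/\mu)^2\le L_4^2$ may simply be treated as a $\mu$-independent constant.

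\textbf{Step 3 (scaling and the main obstacle).} Collecting terms should give
\begin{align*}
\frac{d}{dt}\int_\Omega u^pdx+\int_\Omega u^pdx\le C\Big(\int_\Omega u\,dx\Big)^p+C\int_\Omega u^pdx-\mu p\int_\Omega u^{p+1}dx,
\end{align*}
with $C$ independent of $\mu$. This is where I expect the main difficulty: the bound must come out as $1/\mu$ and not merely $O(1)$. The term $C\int_\Omega u^p\,dx$ must be handled carefully. Splitting it by Young's inequality as $\frac12\mu\int_\Omega u^{p+1}dx+C\mu^{-p}$ yields exactly the $\mu^{-p}$ scaling, as in \eqref{225}. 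Then Lemma \ref{lem21} gives $(\int_\Omega u\,dx)^p\le(2r/\mu)^p$ for $t>t_1$, so the whole right-hand side is at most $C'\mu^{-p}$.

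A Gronwall/ODE comparison argument then gives $\limsup_{t\to\infty}\int_\Omega u^pdx\le C'\mu^{-p}$, hence some $t_3\ge t_2$ with $\|u\|_{L^p(\Omega)}\le L_5/\mu$ for $t>t_3$. If the Gagliardo--Nirenberg step in Step 2 produces an extra power of $\|u\|_{L^1(\Omega)}$ with the wrong homogeneity, I would instead interpolate $\|u^{\frac p2}\|$ against $\|u\|_{L^{p_1}(\Omega)}\le L_4/\mu$ rather than against $L^1$. Every term is then homogeneous of degree $p$ in $1/\mu$.
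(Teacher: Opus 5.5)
Your argument is correct, but it follows a different route from the paper's.

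\textbf{The paper's route.} The paper does not treat the cross term afresh. It reuses \eqref{223}--\eqref{224} from Lemma \ref{lem22}, and hence both \eqref{lem132} and \eqref{lem133}. What remains is $C_7\int_\Omega u^{p+1}dx+C_8\int_\Omega u^pdx+C_9(\int_\Omega u\,dx)^{p+1}$ together with half of the dissipation. It then \emph{discards} the logistic term $-\mu\int_\Omega u^{p+1}dx$ altogether. Both $\int_\Omega u^{p+1}dx$ and $\int_\Omega u^pdx$ are controlled by Gagliardo--Nirenberg interpolation between $\|\nabla u^{\frac p2}\|_{L^2(\Omega)}$ and $\|u\|_{L^{p_1}(\Omega)}\le L_4/\mu$; see \eqref{232}--\eqref{233}. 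The factor $\mu^{-p}$ comes purely from homogeneity: the superlinear exponent $\frac{(p+1)(1-a)}{p-(p+1)a}p\ge p$ is harmless because $\mu>1$. The condition $p_1>\frac n2$ is exactly what makes $\frac{p+1}{p}a<1$.

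\textbf{Your route.} You split $v^{-2\alpha}\le\delta v^{-2}+C_\delta$, so only \eqref{lem132} is needed. The nonsingular remainder $\int_\Omega u^p|\nabla v|^2dx$ is handled by elliptic $W^{2,p_1}$ regularity. Here $p_1>\frac n2$ reappears as $q>n$, which is precisely the condition that the interpolation exponent in your Gagliardo--Nirenberg step lies below $1$. Since no $\int_\Omega u^{p+1}dx$ term is generated, the logistic damping can absorb $C\int_\Omega u^pdx$ with an explicit $O(\mu^{-p})$ remainder, as in \eqref{225}. Everything else is $O(\|u\|_{L^1(\Omega)}^p)=O(\mu^{-p})$ by Lemma \ref{lem21}.

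\textbf{Comparison.} Your route has lighter bookkeeping: no \eqref{lem133}, no H\"older step with $|\nabla v|^{2(p+1)}/v^{p+1}$, and no superlinear interpolation. It also makes clear that only an $O(1)$ bound on $\|\nabla v\|_{L^q(\Omega)}$ plus the decay of $\|u\|_{L^1(\Omega)}$ is needed. The price is invoking Neumann elliptic $L^p$ theory. The paper instead stays within the inequalities already established in Section 2 and does not use the logistic term at this stage.

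\textbf{Two small remarks.} First, \eqref{lem132} actually gives $4\delta\int_\Omega|\nabla u^{\frac p2}|^2dx+2\delta\int_\Omega u^pdx$ rather than $2\delta$ and $\delta$; this is immaterial. Second, your closing fallback is unnecessary. With the lower norm taken in $L^{\frac 2p}(\Omega)$ one has $\|u^{\frac p2}\|_{L^{2/p}(\Omega)}^2=\|u\|_{L^1(\Omega)}^p$, which is exactly of degree $p$. The paper likewise uses Gagliardo--Nirenberg with lower Lebesgue exponents that may fall below $1$.
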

{\bf Proof.}\
If $\alpha\in(0,1)$ and $\mu>\mu_0$ with $p\ge 2$, we know by \eqref{222}, \eqref{223} and \eqref{224} with $\epsilon_2=\epsilon_3=\frac{1}{2}$, and \eqref{lem133} with $\epsilon_1=1$ that
\begin{align}\label{231}
\frac{1}{p}\frac{d}{dt}\int_\Omega u^pdx&\le -\frac{1}{p}\int_\Omega u^pdx-\frac{4(p-1)}{p^2}\int_\Omega |\nabla u^\frac{p}{2}|^2dx+\frac{\chi(p-1)\alpha}{p}\int_\Omega{u^{p}}\frac{|\nabla v|^2}{v^{1+\alpha}}dx\nonumber\\
&~~+\frac{\chi(p-1)}{p}\int_\Omega\frac{u^{p+1}}{v^\alpha}dx+(r+\frac{1}{p})\int_\Omega u^pdx\nonumber\\
&\le -\frac{1}{p}\int_\Omega u^pdx-\frac{2(p-1)}{p^2}\int_\Omega |\nabla u^\frac{p}{2}|^2dx+C_7\int_\Omega u^{p+1}dx\nonumber\\
&~~+C_8\int_\Omega u^pdx+C_9\big(\int_\Omega udx\big)^{p+1},~~t>t_2
\end{align}
with $C_7=2^\frac{\alpha}{1-\alpha}\Big(C_4+C_5 \big((L_2+1)^\frac{1}{p+1}+1\big)\Big), C_8=r+\frac{2p-1}{p^2}, C_9=L_3C_5^{p+1}2^\frac{(p+1)\alpha}{1-\alpha}$. Applying the Gaglirado-Nirenberg inequality and the Poincar\'{e} inequality with \eqref{lem221} for some $p_1>\max\{2,\frac{n}{2}\}$, we know for $p>p_1$ that
\begin{align}\label{232}
\int_\Omega u^{p+1}&=\|u^\frac{p}{2}\|_{L^\frac{2(p+1)}{p}(\Omega)}^\frac{2(p+1)}{p}\nonumber\\
&\le C_{GN}\|\nabla u^\frac{p}{2}\|_{L^2(\Omega)}^{\frac{2(p+1)}{p}a}\|u^\frac{p}{2}\|_{L^\frac{2p_1}{p}(\Omega)}^{\frac{2(p+1)}{p}(1-a)}
+C_{GN}\|u^\frac{p}{2}\|_{L^\frac{2p_1}{p}(\Omega)}^\frac{2(p+1)}{p}\nonumber\\
&\le \frac{p-1}{C_7p^2}\|\nabla u^\frac{p}{2}\|_{L^2(\Omega)}^2+C_{GN}\|u\|_{L^{p_1}(\Omega)}^{p+1}
+C_{10}\|u\|_{L^{p_1}(\Omega)}^{\frac{(p+1)(1-a)}{p-(p+1)a}p},
\end{align}
where $a=\frac{\frac{pn}{2p_1}-\frac{pn}{2(p+1)}}{1-\frac{n}{2}+\frac{pn}{2p_1}}\in(0,1)$ and $\frac{p+1}{p}a<1$ with some $C_{10}=C_{GN}^\frac{p}{p-(p+1)a}(\frac{p-1}{C_7p^2})^{-\frac{(p+1)a}{p-(p+1)a}}$, and
\begin{align}\label{233}
\int_\Omega u^{p}&=\|u^\frac{p}{2}\|_{L^2(\Omega)}^2\nonumber\\
&\le C_{GN}\|\nabla u^\frac{p}{2}\|_{L^2(\Omega)}^{2b}\|u^\frac{p}{2}\|_{L^\frac{2p_1}{p}(\Omega)}^{2(1-b)}
+C_{GN}\|u^\frac{p}{2}\|_{L^\frac{2p_1}{p}(\Omega)}^2\nonumber\\
&\le \frac{p-1}{C_8p^2}\|\nabla u^\frac{p}{2}\|_{L^2(\Omega)}^2+C_{11}\|u\|_{L^{p_1}(\Omega)}^{p}
,
\end{align}
where $b=\frac{\frac{pn}{2p_1}-\frac{n}{2}}{1-\frac{n}{2}+\frac{pn}{2p_1}}\in(0,1)$ with some $C_{11}=C_{GN}+C_{GN}^\frac{1}{1-b}(\frac{p-1}{C_8p^2})^{-\frac{b}{1-b}}$. Hence, if $\alpha\in(0,1)$ and $\mu>\mu_0$, a combination of \eqref{231}--\eqref{233} with \eqref{lem211} and \eqref{lem221} shows for $p>p_1$ that
\begin{align*}
\frac{1}{p}\frac{d}{dt}\int_\Omega u^pdx&\le-\frac{1}{p}\int_\Omega u^pdx+C_9\| u\|_{L^1(\Omega)}^{p+1}+C_{GN}\|u\|_{L^{p_1}(\Omega)}^{p+1}
+C_{10}\|u\|_{L^{p_1}(\Omega)}^{\frac{1-a}{1-\frac{p+1}{p}a}(p+1)}+C_{11}\|u\|_{L^{p_1}(\Omega)}^{p}\nonumber\\
&\le -\frac{1}{p}\int_\Omega u^pdx+\frac{C_9(r+1)^{p+1}}{\mu^{p+1}}
+\frac{C_{GN}L_4^{p+1}}{\mu^{p+1}}+\frac{C_{10}L_4^{\frac{(p+1)(1-a)}{p-(p+1)a}p}}
{\mu^{\frac{(p+1)(1-a)}{p-(p+1)a}p}}+\frac{C_{11}L_4^{p}}{\mu^{p}}\nonumber\\
&\le -\frac{1}{p}\int_\Omega u^pdx+\frac{C_{12}}{\mu^{p}},~~t>t_2
\end{align*}
with $C_{12}=C_9(2r)^{p+1}+C_{GN}L_4^{p+1}+C_{10}L_4^{\frac{(p+1)(1-a)}{p-(p+1)a}p}+C_{11}L_4^{p}$, which together with the Bernoulli inequality entails that
\begin{align*}
\limsup_{t\rightarrow\infty}\|u\|_{L^p(\Omega)}\le \frac{(C_{12}p)^\frac{1}{p}}{\mu}.
\end{align*}
This yields that there exists some $t_3\ge t_2$ such that
\begin{align*}
\|u\|_{L^p(\Omega)}\le \frac{2(C_{12}p)^\frac{1}{p}}{\mu},~~t>t_3,
\end{align*}
and then concludes \eqref{lem231} with $L_5=2(C_{12}p)^\frac{1}{p}$ by the Young inequality. \qquad$\Box$\medskip

{\bf Proof of Theorem \ref{th1}}\
 According to the representation of $u$ as follows
\begin{align}\label{241}
u(x,t)&={\rm e}^{(t-t_3-1)(\Delta-1)}u(x,t_3+1)-\chi\int_{t_3+1}^{t}{\rm e}^{(t-s)(\Delta-1)}\nabla\cdot(u\frac{\nabla v}{v^\alpha})ds\nonumber\\
&~~+(1+r) \int_{t_3+1}^t{\rm e}^{(t-s)(\Delta-I)}uds-\mu\int_{t_3+1}^t{\rm e}^{(t-s)(\Delta-1)} u^2 ds,~~x\in\Omega,~t>t_3+2,
\end{align}
it is known from \cite[Lemma 1.3]{W2010} that
\begin{align}\label{242}
\|u\|_{L^{\infty}(\Omega)}&\le\|{\rm e}^{(t-t_3-1)(\Delta-1)}u(\cdot,t_3+1)\|_{L^{\infty}(\Omega)}+\chi\int_{t_3+1}^t\|{\rm e}^{(t-s)(\Delta-1)}\nabla\cdot(u\frac{\nabla v}{v^\alpha})\|_{L^{\infty}(\Omega)}ds\nonumber\\
&~~+(1+r)\int_{t_3+1}^t\|{\rm e}^{(t-s)(\Delta-1)}u\|_{L^{\infty}(\Omega)}ds\nonumber\\
&\le C_{13}\big(1+(t-t_3-1)^{-\frac{n}{2(n+1)}}\big){\rm e}^{-\lambda_1(t-t_3-1)}\|u(\cdot,t_3+1)-\overline{u}(\cdot,t_3+1)\|_{L^{n+1}(\Omega)}\nonumber\\
&~~+\|{\rm e}^{(t-t_3-1)(\Delta-1)}\bar u(\cdot,t_3+1)\|_{L^{\infty}(\Omega)}+{\chi C_{13}}\int_{t_3+1}^t(1+(t-s)^{-\frac{q+n}{2q}}){\rm e}^{-\lambda_1(t-s)}\|u\frac{\nabla v}{v^\alpha}\|_{L^q(\Omega)}ds\nonumber\\
&~~+(1+r)C_{13}\int_{t_3+1}^t\big(1+(t-s)^{-\frac{n}{2(n+1)}}\big){\rm e}^{-\lambda_1(t-s)}\|u-\overline{u}\|_{L^{n+1}(\Omega)}ds\nonumber\\
&~~+(1+r)\int_{t_3+1}^t\|{\rm e}^{(t-s)(\Delta-1)} \bar u\|_{L^{\infty}(\Omega)}ds\nonumber\\
 &\le 4C_{13}\|u(\cdot, t_3+1)\|_{L^{n+1}(\Omega)}+\frac{1}{|\Omega|}\|u(\cdot,t_3+1)\|_{L^1(\Omega)}+C_{14}\sup_{t>{t_3+2}}\|u\frac{\nabla v}{v^\alpha}\|_{L^{q}(\Omega)}\nonumber\\
 &~~+C_{15}\sup_{t>{t_3+2}}\|u\|_{L^{n+1}(\Omega)}+\frac{1+r}{|\Omega|}\sup_{t>{t_3+2}}\|u\|_{L^{1}(\Omega)},~~t>t_3+2,
\end{align}
where $\bar u=\frac{1}{|\Omega|}\int_\Omega udx$ and $\lambda_1>0$ is the first nonzero
eigenvalue of $-\Delta $ in $\Omega$ under Neumann boundary conditions, and $C_{13}=C_{13}(\Omega)>0$ along with $C_{14}={\chi C_{13}}\int_0^\infty(1+\sigma^{-\frac{q+n}{2q}}){\rm e}^{-\lambda_1\sigma}d\sigma$ with some $q>n$ determined below, and $C_{15}=2(1+r)C_{13}\int_0^\infty(1+\sigma^{-\frac{n}{2(n+1)}}){\rm e}^{-\lambda_1\sigma}d\sigma$.

If $\alpha\in(0,\frac{1}{2}]$ and $\mu>\mu_0$, letting $q=n+1$, we get by the H\"{o}lder inequality with \eqref{131} for $k=p$ that
\begin{align}\label{243}
\int_\Omega (u\frac{|\nabla v|}{v^\alpha})^{n+1}dx&\le \Big(\int_\Omega u^{(2-\alpha)(n+1)}dx\Big)^\frac{1}{2-\alpha}\Big(\int_\Omega v^{(2-\alpha)(n+1)}dx\Big)^\frac{1-2\alpha}{2(2-\alpha)}\Big(\int_\Omega \big(\frac{|\nabla v|^2}{v}\big)^{(2-\alpha)(n+1)}dx\Big)^\frac{1}{2(2-\alpha)}\nonumber\\
&\le \Big(\int_\Omega u^{(2-\alpha)(n+1)}dx\Big)^\frac{3-2\alpha}{2-\alpha}\Big(C_1\int_\Omega u^{(2-\alpha)(n+1)}dx+C_2\int_\Omega v^{(2-\alpha)(n+1)}dx\Big)^\frac{1}{2(2-\alpha)} \nonumber\\
&\le C_{16}\|u\|_{L^{(2-\alpha)(n+1)}(\Omega)}^{(2-\alpha)(n+1)}
\end{align}
with $C_{16}=2+[C_1\big((2-\alpha)(n+1)\big)+C_2\big((2-\alpha)(n+1),\Omega\big)]^\frac{1}{2(2-\alpha)}$ due to $\int_\Omega v^{(2-\alpha)(n+1)}dx\le \int_\Omega u^{(2-\alpha)(n+1)}dx$, and then
by \eqref{242} with \eqref{243}, \eqref{lem211} and \eqref{lem231} that
\begin{align}\label{244}
\|u\|_{L^\infty(\Omega)}\le \frac{C_{17}}{\mu}+\frac{C_{18}}{\mu^{2-\alpha}}\le\frac{C_{19}}{\mu},~~t>t_3+2,
\end{align}
with $C_{17}=(4C_{13}+C_{15})L_5(\chi,\alpha,r,\Omega,n+1)+\frac{2r(r+2)}{|\Omega|}$, $C_{18}=C_{14}C_{17}^\frac{1}{n}L_5(\chi,\alpha,r,\Omega,(2-\alpha)(n+1))$, and $C_{19}=C_{17}+C_{18}$.

If $\alpha\in(\frac{1}{2},\frac{n+2}{2n})$ and $\mu>\mu_0$, with selecting $q=\frac{2\alpha}{2\alpha-1}>n$, again by the H\"{o}lder inequality with \eqref{131} for $k=p$ and \eqref{lem131}, we obtain that
 \begin{align}\label{2431}
\int_\Omega (u\frac{|\nabla v|}{v^\alpha})^\frac{2\alpha}{2\alpha-1}dx&\le \Big(\int_\Omega u^\frac{2\alpha(2-\alpha)}{(1-\alpha)(2\alpha-1)}dx\Big)^\frac{(3-2\alpha)(1-\alpha)}{2(2-\alpha)}\Big(\int_\Omega \big(\frac{|\nabla v|^2}{v}\big)^{\frac{2\alpha(2-\alpha)}{(1-\alpha)(2\alpha-1)}}dx\Big)^\frac{1-\alpha}{2(2-\alpha)}\Big(\int_\Omega \frac{u}{v}dx\Big)^\alpha\nonumber\\
&\le |\Omega|^\alpha \Big(\int_\Omega u^\frac{2\alpha(2-\alpha)}{(1-\alpha)(2\alpha-1)}dx\Big)^\frac{(3-2\alpha)(1-\alpha)}{2(2-\alpha)}\Big(C_1\int_\Omega u^{\frac{2\alpha(2-\alpha)}{(1-\alpha)(2\alpha-1)}}dx+C_2\int_\Omega v^{\frac{2\alpha(2-\alpha)}{(1-\alpha)(2\alpha-1)}}dx\Big)^\frac{1-\alpha}{2(2-\alpha)}\nonumber\\
&\le C_{20}\| u\|_{L^{\frac{2\alpha(2-\alpha)}{(1-\alpha)(2\alpha-1)}}(\Omega)}^\frac{2\alpha(2-\alpha)}{2\alpha-1}
\end{align}
with $C_{20}=|\Omega|^\alpha\big[C_1\big(\frac{2\alpha(2-\alpha)}{(1-\alpha)(2\alpha-1)}\big)
+C_2\big(\frac{2\alpha(2-\alpha)}{(1-\alpha)(2\alpha-1)},\Omega\big)\big]^\frac{1}{2(2-\alpha)}$ due to $\int_\Omega v^{\frac{2\alpha(2-\alpha)}{(1-\alpha)(2\alpha-1)}}dx\le \int_\Omega u^{\frac{2\alpha(2-\alpha)}{(1-\alpha)(2\alpha-1)}}dx$. Hence, it is known by \eqref{242} with \eqref{2431}, \eqref{lem211} and \eqref{lem231} that
\begin{align}\label{244}
\|u\|_{L^\infty(\Omega)}\le \frac{C_{21}}{\mu}+\frac{C_{22}}{\mu^{2-\alpha}}\le\frac{C_{23}}{\mu},~~t>t_3+2,
\end{align}
with $C_{21}=(4C_{13}+C_{15})L_5(\chi,\alpha,r,\Omega,n+1)+\frac{2r(r+2)}{|\Omega|}$, $C_{22}=C_{14}C_{17}^\frac{1}{n}L_5(\chi,\alpha,r,\Omega,\frac{2\alpha(2-\alpha)}{(1-\alpha)(2\alpha-1)})$, and $C_{23}=C_{21}+C_{22}$.

Consequently, if $\alpha\in(0,\frac{n+2}{2n})$ and $\mu>\mu_0$, we obtain \eqref{th11} with $L_0=\max\{C_{20},C_{23}\}$ and $t_0=t_3+2$.
\qquad$\Box$\medskip

\section{H\"{o}lder regularity for $\alpha\in(0,\frac{1}{2})$}
Let $(u,v)$ be the global bounded classical solution to \eqref{p} mentioned in Lemma \ref{lem11}. Denote $z:=v^{1-\alpha}$ for $\alpha\in(0,1)$. Then
\begin{equation}\label{pp}
\begin{cases}
u_t=\Delta u-\frac{\chi}{1-\alpha}\nabla\cdot(u\nabla z)+ru-\mu u^2,&~~x\in\Omega,~t>0,\\
0=\Delta z+\frac{\alpha}{1-\alpha}\frac{|\nabla z|^2}{z}-(1-\alpha) z+(1-\alpha) u{z^{-\frac{\alpha}{1-\alpha}}},&~~x\in\Omega,~t>0,\\
\frac{\partial u}{\partial\nu}=\frac{\partial z}{\partial\nu}=0,&~~x\in\partial\Omega,~t>0,\\
u(x,0)=u_0(x),&~~x\in\Omega.
\end{cases}
\end{equation}
We will establish the H\"{o}lder regularity of $u$ and estimate of $\nabla u$ in $L^p$-norm to obtain the convergence of $(u,v)$.  At first, we estimate $\int_\Omega |\nabla z|^{2q}dx$ for $q\ge 3$.

\begin{lemma}\label{lem41}
If $\alpha\in(0,\frac{1}{2})$ and $\mu>\tilde\mu_0$, it holds for $q\ge 3$ that
\begin{align}\label{lem411}
\int_\Omega |\nabla z|^{2q}dx&\le-\frac{1}{q}\int_\Omega |\nabla|\nabla z|^q|^2dx+4(1-\alpha)^2(q+n)\int_\Omega u^2z^{-\frac{2\alpha}{1-\alpha}}|\nabla z|^{2(q-1)}dx.
\end{align}
\end{lemma}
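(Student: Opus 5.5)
We test the elliptic equation for $z$ in \eqref{pp} against $|\nabla z|^{2q-2}\Delta z$, or equivalently apply $\nabla z\cdot\nabla$ to it and multiply by $|\nabla z|^{2q-2}$. The tools are the Bochner identity $\frac12\Delta|\nabla z|^2=\nabla z\cdot\nabla\Delta z+|D^2z|^2$ and the convexity of $\Omega$, which gives $\frac{\partial|\nabla z|^2}{\partial\nu}\le 0$ on $\partial\Omega$. We write the second equation as
$$\Delta z=-\frac{\alpha}{1-\alpha}\frac{|\nabla z|^2}{z}+(1-\alpha)z-(1-\alpha)uz^{-\frac{\alpha}{1-\alpha}},$$
multiply by $-\nabla\cdot(|\nabla z|^{2q-2}\nabla z)$ and integrate over $\Omega$. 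Integrating by parts, the left side becomes $\int_\Omega|\nabla z|^{2q-2}\nabla z\cdot\nabla\Delta z\,dx$ up to sign. By Bochner, this equals
$$\frac12\int_\Omega|\nabla z|^{2q-2}\Delta|\nabla z|^2\,dx-\int_\Omega|\nabla z|^{2q-2}|D^2z|^2\,dx.$$
After one more integration by parts (the boundary term has a good sign by convexity), this produces the dissipation $-\frac{q-1}{2}\int_\Omega|\nabla z|^{2q-4}|\nabla|\nabla z|^2|^2\,dx$, which is a multiple of $-\frac1q\int_\Omega|\nabla|\nabla z|^q|^2\,dx$, together with $-\int_\Omega|\nabla z|^{2q-2}|D^2z|^2\,dx$.

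The linear term $(1-\alpha)z$ tested against $-\nabla\cdot(|\nabla z|^{2q-2}\nabla z)$ gives exactly $(1-\alpha)\int_\Omega|\nabla z|^{2q}\,dx$. This is the left-hand side of \eqref{lem411}, up to the factor $1-\alpha$, which is absorbed since $1-\alpha\in(\frac12,1)$.

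The source term $-(1-\alpha)uz^{-\frac{\alpha}{1-\alpha}}$ is handled without differentiating $u$. We keep it as $(1-\alpha)\int_\Omega uz^{-\frac{\alpha}{1-\alpha}}\nabla\cdot(|\nabla z|^{2q-2}\nabla z)\,dx$. We expand the divergence as $|\nabla z|^{2q-2}\Delta z+(q-1)|\nabla z|^{2q-4}\nabla|\nabla z|^2\cdot\nabla z$ and use $|\Delta z|\le\sqrt n|D^2z|$. Young's inequality then bounds it by $\frac14\int_\Omega|\nabla z|^{2q-2}|D^2z|^2\,dx$, plus a small multiple of $\int_\Omega|\nabla z|^{2q-4}|\nabla|\nabla z|^2|^2\,dx$, plus $C(q+n)(1-\alpha)^2\int_\Omega u^2z^{-\frac{2\alpha}{1-\alpha}}|\nabla z|^{2q-2}\,dx$. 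Tracking the constants should yield the factor $4(1-\alpha)^2(q+n)$.

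The main obstacle is the singular nonlinear term $\frac{\alpha}{1-\alpha}\frac{|\nabla z|^2}{z}$, which has no good sign and involves $z^{-1}$. We would integrate by parts once to get $\frac{\alpha}{1-\alpha}\int_\Omega\nabla(|\nabla z|^2/z)\cdot\nabla z\,|\nabla z|^{2q-2}\,dx$. This splits into
$$-\frac{\alpha}{1-\alpha}\int_\Omega\frac{|\nabla z|^{2q+2}}{z^2}\,dx,$$
which is good, and a cross term $\frac{\alpha}{1-\alpha}\int_\Omega z^{-1}|\nabla z|^{2q-2}\nabla|\nabla z|^2\cdot\nabla z\,dx$. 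By Young, the cross term is bounded by $\frac{\alpha}{1-\alpha}\int_\Omega\frac{|\nabla z|^{2q+2}}{z^2}\,dx$ plus $\frac{\alpha}{4(1-\alpha)}\int_\Omega|\nabla z|^{2q-4}|\nabla|\nabla z|^2|^2\,dx$. The first piece exactly cancels the good term. The second is absorbed because $\frac{\alpha}{4(1-\alpha)}<\frac14$ for $\alpha<\frac12$, while the available dissipation is of order $\frac{q-1}{2}\ge1$ for $q\ge3$. This is precisely where $\alpha<\frac12$ and $q\ge3$ enter.

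Collecting terms and discarding the leftover nonpositive Hessian contribution gives \eqref{lem411}.
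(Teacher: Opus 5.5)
Your proposal is correct and follows the paper's proof essentially step for step. Like the paper, you apply $\nabla z\cdot\nabla$ to the $z$-equation with weight $|\nabla z|^{2q-2}$, use the Bochner identity and convexity for the boundary term, cancel the $|\nabla z|^{2q+2}/z^2$ contribution of the singular term by Young's inequality, and move the derivative off $u$ in the source term before using $|\Delta z|^2\le n|D^2z|^2$. (Your opening claim that testing against $|\nabla z|^{2q-2}\Delta z$ is equivalent is not right, but the multiplier you actually use, $-\nabla\cdot(|\nabla z|^{2q-2}\nabla z)$, is the correct one.)

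To close the constants, which you leave as "should yield", a concrete choice works:
\begin{itemize}
\item Work with the doubled identity, so the left side is $2(1-\alpha)\int_\Omega|\nabla z|^{2q}dx\ge\int_\Omega|\nabla z|^{2q}dx$.
\item Absorb the gradient part of the source term using $\frac{q-1}{q^2}\int_\Omega|\nabla|\nabla z|^q|^2dx$.
\item This leaves the dissipation coefficient $\frac{1}{q^2}\bigl(3(q-1)-\frac{2\alpha}{1-\alpha}\bigr)\ge\frac1q$ and a source coefficient at most $4(1-\alpha)^2(q+n)$.
\end{itemize}
This final check, together with $2(1-\alpha)>1$, is where $q\ge3$ and $\alpha<\frac12$ are actually used. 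The absorption of the singular cross term needs much less, so your remark that it is "precisely where" these hypotheses enter is misplaced.
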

{\bf Proof.}\
Based on the equality $\Delta|\nabla z|^2=2\nabla z\cdot\nabla\Delta z+2|D^2z|^2$, testing $z$-equation by $2\nabla z\cdot\nabla$ to get
\begin{align}\label{4110}
0&=2\nabla z\cdot\nabla(\Delta z+\frac{\alpha}{1-\alpha}\frac{|\nabla z|^2}{z}-(1-\alpha) z+(1-\alpha) uz^{-\frac{\alpha}{1-\alpha}})\nonumber\\
&=\Delta|\nabla z|^2-2|D^2z|^2+\frac{2\alpha}{1-\alpha}\nabla z\cdot\frac{\nabla|\nabla z|^2}{z}\nonumber\\
&~~-\frac{2\alpha}{1-\alpha}\frac{|\nabla z|^4}{z^2}-2(1-\alpha)|\nabla z|^2+2(1-\alpha)\nabla z\cdot\nabla (uz^{-\frac{\alpha}{1-\alpha}}),
\end{align}
which multiplying $|\nabla z|^{2(q-1)}$ with $q\ge 2$ and integrating over $\Omega$ by part along with the convexity of $\Omega$ yields
\begin{align}\label{411}
0&=\int_\Omega \Delta|\nabla z|^2|\nabla z|^{2(q-1)}dx-2\int_\Omega |\nabla z|^{2(q-1)}|D^2z|^2dx\nonumber\\
&~~-2(1-\alpha)\int_\Omega |\nabla z|^{2q}dx-\frac{2\alpha}{1-\alpha}\int_\Omega \frac{|\nabla z|^{2(q+1)}}{z^2}dx\nonumber\\
&~~+\frac{2\alpha}{1-\alpha}\int_\Omega |\nabla z|^{2(q-1)}\nabla z\cdot\frac{\nabla|\nabla z|^2}{z}dx+2(1-\alpha)\int_\Omega |\nabla z|^{2(q-1)}\nabla z\cdot\nabla(uz^{-\frac{\alpha}{1-\alpha}})dx\nonumber\\
&\le-\frac{4}{q^2}(q-1-\frac{\alpha}{2(1-\alpha)})\int_\Omega |\nabla|\nabla z|^q|^2dx-2(1-\alpha)\int_\Omega |\nabla z|^{2q}dx\nonumber\\
&~~-2\int_\Omega |\nabla z|^{2(q-1)}|D^2z|^2dx+2(1-\alpha)\int_\Omega |\nabla z|^{2(q-1)}\nabla z\cdot(uz^{-\frac{\alpha}{1-\alpha}})dx.
\end{align}
By the pointwise inequality $|\Delta z|^2\le n|D^2z|^2$, the last term of \eqref{411} can be estimated as
\begin{align}\label{414}
2(1-\alpha)\int_\Omega |\nabla z|^{2(q-1)}\nabla z&\cdot\nabla(uz^{-\frac{\alpha}{1-\alpha}})dx=-2(1-\alpha)\int_\Omega uz^{-\frac{\alpha}{1-\alpha}}|\nabla z|^{2(q-1)}\Delta zdx\nonumber\\
&~~-2(1-\alpha)(q-1)\int_\Omega uz^{-\frac{\alpha}{1-\alpha}}|\nabla z|^{2(q-2)}\nabla|\nabla z|^2\cdot\nabla zdx\nonumber\\
&\le 2\int_\Omega |\nabla z|^{2(q-1)}|D^2 z|^2dx+\frac{q-1}{q^2}\int_\Omega |\nabla|\nabla z|^q|^2dx\nonumber\\
&~~+(1-\alpha)^2(4q-4+\frac{n}{2})\int_\Omega u^2z^{-\frac{2\alpha}{1-\alpha}}|\nabla z|^{2(q-1)}.
\end{align}
Hence, if $\alpha\in(0,\frac{1}{2})$ with $\mu>\tilde\mu_0$, then for $q\ge 3$ we get by \eqref{411} and \eqref{414} that
\begin{align*}
\int_\Omega |\nabla z|^{2q}dx&\le-\frac{2}{q^2}(q-1-\frac{\alpha}{1-\alpha})\int_\Omega |\nabla|\nabla z|^q|^2dx\nonumber\\
&~~+(1-\alpha)^2(4q-4+n)\int_\Omega u^2z^{-\frac{2\alpha}{1-\alpha}}|\nabla z|^{2(q-1)}\nonumber\\
&\le -\frac{1}{q}\int_\Omega |\nabla|\nabla z|^q|^2dx+4(1-\alpha)^2(q+n)\int_\Omega u^2z^{-\frac{2\alpha}{1-\alpha}}|\nabla z|^{2(q-1)}dx.
\end{align*}
This completes the proof of \eqref{lem411}. \qquad$\Box$\medskip

For convenience, let $\theta:=\frac{2\alpha}{1-2\alpha}$ for $\alpha\in(0,\frac{1}{2})$, and $\theta_0:=\min\{\theta,1\}$. Then the following recursive relation on $\int_\Omega |\nabla z|^{2q}dx+1$ can be obtained.
\begin{lemma}\label{lem42}
If $\alpha\in(0,\frac{1}{2})$ and $\mu>\tilde\mu_0$, then for $q>\max\{1+\theta,n\}$ it admits that
\begin{align}\label{lem421}
\int_\Omega |\nabla z|^{2q}dx+1&\le
 C_{30}q^\frac{2n+2+\theta_0}{2-\theta_0}\Big(\int_\Omega |\nabla z|^{q}dx+1\Big)^2
\end{align}
with some $C_{30}>0$ independent of $q$.
\end{lemma}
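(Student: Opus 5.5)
We start from Lemma \ref{lem41}, which gives for $q\ge 3$
\begin{align*}
\int_\Omega |\nabla z|^{2q}dx+\frac{1}{q}\int_\Omega |\nabla|\nabla z|^q|^2dx\le 4(1-\alpha)^2(q+n)\int_\Omega u^2z^{-\frac{2\alpha}{1-\alpha}}|\nabla z|^{2(q-1)}dx .
\end{align*}
The whole task is to absorb the right-hand side, with explicit polynomial dependence on $q$. The singular factor is $z^{-\frac{2\alpha}{1-\alpha}}=v^{-2\alpha}$.

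To handle it without a lower bound on $v$, we use the a priori identity \eqref{lem131}. It gives $\int_\Omega \frac{u}{v}dx\le|\Omega|$, and, since $u\le L_1$ by Lemma \ref{lem11}, we have $u^2v^{-2\alpha}\le L_1^{2-2\alpha}\,(u/v)^{2\alpha}$. Hölder's inequality with exponents $\frac{1}{2\alpha}$ and $\frac{1}{1-2\alpha}$ (legitimate because $\alpha<\frac12$) then yields
\begin{align*}
\int_\Omega u^2z^{-\frac{2\alpha}{1-\alpha}}|\nabla z|^{2(q-1)}dx\le C\Big(\int_\Omega |\nabla z|^{\frac{2(q-1)}{1-2\alpha}}dx\Big)^{1-2\alpha}
= C\big\|\,|\nabla z|^q\big\|_{L^{\frac{2(q-1)}{q(1-2\alpha)}}}^{\frac{2(q-1)}{q}} .
\end{align*}
This is where $\theta=\frac{2\alpha}{1-2\alpha}$ enters, since $\frac{1}{1-2\alpha}=1+\theta$. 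For $q>1+\theta$ the exponent $\frac{2(q-1)}{q(1-2\alpha)}$ stays below $2+\theta$, and it is then subcritical relative to $H^1$ for $w:=|\nabla z|^q$.

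Next we apply the Gagliardo–Nirenberg inequality to $w$, interpolating between $\|\nabla w\|_{L^2}$ and $\|w\|_{L^1}=\int_\Omega|\nabla z|^qdx$:
\begin{align*}
\|w\|_{L^s}\le C_{GN}\|\nabla w\|_{L^2}^{a}\|w\|_{L^1}^{1-a}+C_{GN}\|w\|_{L^1}, \qquad s=\frac{2(q-1)}{q(1-2\alpha)} .
\end{align*}
Here the exponent satisfies $a\,\frac{2(q-1)}{q}<2$; the condition $q>\max\{1+\theta,n\}$ is used to make this strictly subcritical, with a margin independent of $q$. We then apply Young's inequality. The gradient part is absorbed into $\frac1q\int_\Omega|\nabla w|^2dx$, and what remains is a constant times a power of $q$ times $(\int_\Omega|\nabla z|^q+1)^{\beta}$, where $\beta\le 2$.

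Tracking the constants gives the power of $q$. The prefactor $4(q+n)$ combines with the Young weight $1/q$, and the Young conjugate exponent is at most $\frac{2}{2-\theta_0}$ type. Together these produce a factor $q^{\gamma}$ with $\gamma\le\frac{2n+2+\theta_0}{2-\theta_0}$. Using $\theta_0=\min\{\theta,1\}$ caps the loss uniformly, so that the constant does not depend on $q$.

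The main obstacle is the Gagliardo–Nirenberg/Young bookkeeping. We need the Young exponent bounded uniformly in $q$, and for the resulting power of $(\int|\nabla z|^q+1)$ to be at most $2$, after enlarging it using $X+1\ge1$. We also need the Gagliardo–Nirenberg constant $C_{GN}$ to be independent of $q$ as $s$ varies in a compact range of $(1,2+\theta]$. This is exactly the step I would verify first, computing $a$ explicitly in terms of $n$, $q$ and $\alpha$.
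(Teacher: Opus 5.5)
Your route has a genuine gap at the Hölder step.

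\textbf{Where it fails.} Paying for $v^{-2\alpha}$ with $u^2v^{-2\alpha}\le L_1^{2-2\alpha}(u/v)^{2\alpha}$ and $\int_\Omega\frac{u}{v}dx\le|\Omega|$ multiplies the integrability of $|\nabla z|^{2(q-1)}$ by $\frac{1}{1-2\alpha}=1+\theta$. So $w=|\nabla z|^q$ must be controlled in $L^s$ with $s=\frac{2(q-1)(1+\theta)}{q}$. As $q\to\infty$ this tends to $2+2\theta=\frac{2}{1-2\alpha}$; it does not stay below $2+\theta$ as you claim. For $n\ge3$ the limit exceeds the Sobolev exponent $\frac{2n}{n-2}$ as soon as $\alpha>\frac1n$. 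Equivalently, the Gagliardo--Nirenberg exponent $a=\frac{2n}{n+2}(1-\frac1s)$ tends to $a_\infty=\frac{n(1+2\alpha)}{n+2}$, and $a_\infty\ge1$ exactly when $\alpha\ge\frac1n$. Hence for $\alpha\in[\frac1n,\frac12)$ the interpolation between $\|\nabla w\|_{L^2(\Omega)}$ and $\|w\|_{L^1(\Omega)}$ is unavailable for large $q$. At $\alpha=\frac1n$ it degenerates: the Young exponent grows like $q$, giving factors $q^{cq}$ that ruin the Moser iteration. The hypothesis $q>\max\{1+\theta,n\}$ cannot repair this, and $q>1+\theta$ plays no role in your argument at all, which is a symptom.

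\textbf{Even where it works.} For $n=2$, or $n\ge3$ with $\alpha<\frac1n$, your bookkeeping yields the power $q^{\frac{1+a_\infty}{1-a_\infty}}$. This depends on $\alpha$, blows up as $\alpha\to\frac1n$ (or $\alpha\to\frac12$ when $n=2$), and generally exceeds $\frac{2n+2+\theta_0}{2-\theta_0}$. For example, $n=2$, $\alpha=0.4$ gives $19$ versus $7$. So the claim that $\theta_0$ ``caps the loss'' is not derived from anything in your scheme.

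\textbf{The missing idea.} Do not use the $L^1$ bound on $u/v$. Instead, spend part of the gradient power to cancel the singular weight exactly. Since $z^{-\frac{2\alpha}{1-\alpha}}|\nabla z|^{2\theta}=(1-\alpha)^{2\theta}\big(\frac{|\nabla v|^2}{v}\big)^{\theta}$, the integrand equals $(1-\alpha)^{2\theta}u^2\big(\frac{|\nabla v|^2}{v}\big)^\theta|\nabla z|^{2(q-1-\theta)}$; this is precisely where $q>1+\theta$ is needed. Then proceed as the paper does:
\begin{itemize}
\item Apply Hölder with exponents $\frac{n}{\theta_0}$ and $\frac{n}{n-\theta_0}$.
\item Bound $\int_\Omega\big(\frac{|\nabla v|^2}{v}\big)^{\theta n/\theta_0}dx$ by $C\int_\Omega u^{\theta n/\theta_0}dx\le C$, using the weighted elliptic estimate \eqref{131} (cf. \cite[Lemma 3.8]{K2026}) together with $u\le L_1$.
\item What remains is $C\||\nabla z|^q\|_{L^{2n/(n-\theta_0)}(\Omega)}^{2(q-1-\theta)/q}$.
\end{itemize}
Because $\theta_0\le1$, the exponent $\frac{2n}{n-\theta_0}\le\frac{2n}{n-1}$ is subcritical for every $\alpha\in(0,\frac12)$. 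The Gagliardo--Nirenberg exponent $a=\frac{n+\theta_0}{n+2}$ is then independent of $q$, with $\frac{a}{1-a}=\frac{n+\theta_0}{2-\theta_0}$. Young's inequality with weight of order $\frac{1}{q(q+n)}$ gives exactly the factor $q^{1+\frac{2(n+\theta_0)}{2-\theta_0}}=q^{\frac{2n+2+\theta_0}{2-\theta_0}}$, and the resulting powers of $\int_\Omega|\nabla z|^qdx$ are below $2$.
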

{\bf Proof.}\
For $\alpha\in(0,\frac{1}{2})$ and $\mu>\tilde\mu_0$, if $q>1+\theta$, we obtain by \eqref{lem111} and \cite[Lemma 3.8]{K2026} along with the definition of $z$ that
\begin{align}\label{421}
\int_\Omega u^2z^{-\frac{2\alpha}{1-\alpha}}|\nabla z|^{2(q-1)}dx&=(1-\alpha)^{2\theta}\int_\Omega u^2\frac{|\nabla v|^{2\theta}}{v^\theta}|\nabla z|^{2(q-1-\theta)}dx\nonumber\\
&\le \|u\|_{L^\infty(\Omega)}^2 \Big(\int_\Omega \big(\frac{|\nabla v|^2}{v}\big)^\frac{\theta n}{\theta_0}dx\Big)^\frac{\theta_0}{n}\Big(\int_\Omega |\nabla z|^\frac{2(q-1-\theta)n}{n-\theta_0}dx\Big)^\frac{n-\theta_0}{n}\nonumber\\
&\le \|u\|_{L^\infty(\Omega)}^2 \Big(C_{24}\int_\Omega u^\frac{\theta n}{\theta_0}dx\Big)^\frac{\theta_0}{n}\Big(\int_\Omega |\nabla z|^\frac{2(q-1-\theta)n}{n-\theta_0}dx\Big)^\frac{n-\theta_0}{n}\nonumber\\
&\le (C_{24}|\Omega|)^\frac{\theta_0}{n}\|u\|_{L^\infty(\Omega)}^{2+\theta}\Big(\int_\Omega |\nabla z|^\frac{2(q-1-\theta)n}{n-\theta_0}dx\Big)^\frac{n-\theta_0}{n}\nonumber\\
&\le C_{24}\Big(\int_\Omega |\nabla z|^\frac{2(q-1-\theta)n}{n-\theta_0}dx\Big)^\frac{n-\theta_0}{n}
\end{align}
with some $C_{24}=C_{24}(\Omega)>0$ and $C_{25}=(C_{24}|\Omega|)^\frac{\theta_0}{n} L_1^{2+\theta}$. By the Gaglirado-Nirenberg inequality and the Poincar\'{e} inequality, it holds that
\begin{align}\label{422}
\Big(\int_\Omega |\nabla z|^\frac{2(q-1-\theta)n}{n-\theta_0}dx\Big)^\frac{n-\theta_0}{n}&\le|\Omega|^\frac{(1+\theta)(n-\theta_0)}{qn}
\Big(\int_\Omega |\nabla z|^\frac{2nq}{n-\theta_0}dx\Big)^\frac{(n-\theta_0)(q-1-\theta)}{nq}\nonumber\\
&\le C_{26}\||\nabla z|^q\|_{L^\frac{2n}{n-\theta_0}(\Omega)}^\frac{2(q-1-\theta)}{q}\nonumber\\
&\le C_{26} \big(C_{27}\||\nabla z|^q\|_{W^{1,2}(\Omega)}^a\||\nabla z|^q\|_{L^1(\Omega)}^{1-a}\big)^\frac{2(q-1-\theta)}{q}\nonumber\\
&\le C_{28}\Big(\|\nabla|\nabla z|^q\|_{L^2(\Omega)}^\frac{2(q-1-\theta)a}{q}\||\nabla z|^q\|_{L^1(\Omega)}^\frac{2(q-1-\theta)(1-a)}{q}+\||\nabla z|^q\|_{L^1(\Omega)}^\frac{2(q-1-\theta)}{q}\Big)
\end{align}
where $a=\frac{n+\theta_0}{n+2}\in(0,1)$ due  to $\theta_0\in(0,1]$, with $C_{26}=(1+|\Omega|)^\frac{(1+\theta)(n-\theta_0)}{n}$ and $C_{27}=C_{27}(\Omega)>0$ and $C_{28}=2C_{26}(1+C_{27})^2$. Then, we get from by \eqref{421} with \eqref{422} and the Young inequality with $\epsilon_2>0$ that
\begin{align}\label{423}
\int_\Omega u^2z^{-\frac{2\alpha}{1-\alpha}}|\nabla z|^{2(q-1)}dx&\le  C_{29}\Big(\|\nabla|\nabla z|^q\|_{L^2(\Omega)}^\frac{2(q-1-\theta)a}{q}\||\nabla z|^q\|_{L^1(\Omega)}^\frac{2(q-1-\theta)(1-a)}{q}+\||\nabla z|^q\|_{L^1(\Omega)}^\frac{2(q-1-\theta)}{q}\Big)\nonumber\\
&\le \epsilon_2\|\nabla|\nabla z|^q\|_{L^2(\Omega)}^2+C_{29}\||\nabla z|^q\|_{L^1(\Omega)}^\frac{2(q-1-\theta)}{q}\nonumber\\
&~~+\epsilon_2^{-\frac{(q-1-\theta)a}{q-(q-1-\theta)a}}C_{29}^{\frac{q}{q-(q-1-\theta)a}}\||\nabla z|^q\|_{L^1(\Omega)}^{\frac{2(q-1-\theta)(1-a)}{q-(q-1-\theta)a}}
\end{align}
with $C_{29}=C_{24}C_{28}$.  If $q>1+\theta$,  it is known that
$$\frac{(q-1-\theta)a}{q-(q-1-\theta)a}<\frac{n+\theta_0}{2-\theta_0},~~\frac{q}{q-(q-1-\theta)a}<\frac{n+2}{2-\theta_0}$$
and
$$\frac{2(q-1-\theta)(1-a)}{q-(q-1-\theta)a}<2.$$
Hence, if $\alpha\in(0,\frac{1}{2})$ and $\mu>\tilde\mu_0$, for $q>\max\{n,1+\theta\}$ with selecting $\epsilon_2=\frac{1}{4(1-\alpha)^2q(q+n)}$,
we obtain by \eqref{lem411} and \eqref{423} that
\begin{align}\label{425}
\int_\Omega |\nabla z|^{2q}dx+1&\le 4(1-\alpha)^2(q+n)C_{29}\||\nabla z|^q\|_{L^1(\Omega)}^\frac{2(q-1-\theta)}{q}\nonumber\\
&~~+4(1-\alpha)^2(q+n)\big(4(1-\alpha)^2q(q+n)\big)^{\frac{(q-1-\theta)a}{q-(q-1-\theta)a}}
C_{29}^{\frac{q}{q-(q-1-\theta)a}}\||\nabla z|^q\|_{L^1(\Omega)}^{\frac{2(q-1-\theta)(1-a)}{q-(q-1-\theta)a}}+1\nonumber\\
&\le 8(1-\alpha)^2C_{29}q\||\nabla z|^q\|_{L^1(\Omega)}^\frac{2(q-1-\theta)}{q}\nonumber\\
&~~+(8(1-\alpha)^2)^{\frac{q}{q-(q-1-\theta)a}}C_{29}^{\frac{q}{q-(q-1-\theta)a}}
q^{1+\frac{2(q-1-\theta)a}{q-(q-1-\theta)a}}\||\nabla z|^q\|_{L^1(\Omega)}^{\frac{2(q-1-\theta)(1-a)}{q-(q-1-\theta)a}}+1\nonumber\\
&\le  C_{30}q^\frac{2n+2+\theta_0}{2-\theta_0}\Big(\int_\Omega |\nabla z|^{q}dx+1\Big)^2
\end{align}
with $C_{30}=8(1-\alpha)^2C_{29}+
(8(1-\alpha)^2(1+C_{29}))^\frac{n+2}{2-\theta_0}$ independent of $q$.
This completes the proof of \eqref{lem421}. \qquad$\Box$\medskip

According to Lemma \ref{lem42}, the boundedness of $\nabla z$ in $L^\infty$ for $\alpha\in(0,\frac{1}{2})$ can be obtained via the Moser iteration.
\begin{lemma}\label{lem43}
If $\alpha\in(0,\frac{1}{2})$ and $\mu>\tilde\mu_0$, there exist some $L_6=L_6(\chi,\alpha,r,\mu,\Omega)>0$ such that
\begin{align}\label{lem431}
\|\nabla z\|_{L^\infty(\Omega)}\le L_6.
\end{align}
\end{lemma}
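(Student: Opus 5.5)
The plan is a Moser-type iteration built on the recursive inequality of Lemma \ref{lem42}, applied along the sequence $q_k:=2^k q_*$. Here $q_*>\max\{1+\theta,n\}$ is fixed, for instance $q_*=2^{k_0}$ for a suitable integer $k_0$.

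Set $M_k:=\int_\Omega |\nabla z|^{q_k}dx+1$, with $A:=C_{30}$ and $\beta:=\frac{2n+2+\theta_0}{2-\theta_0}$. Lemma \ref{lem42} with $q=q_{k-1}$ gives
\begin{align*}
M_k\le A\,q_{k-1}^{\beta}M_{k-1}^2=A\,q_*^{\beta}2^{\beta(k-1)}M_{k-1}^2,\qquad k\ge 1.
\end{align*}
Because $C_{30}$ does not depend on $q$, iterating this yields
\begin{align*}
M_k\le \prod_{j=0}^{k-1}\big(Aq_*^\beta 2^{\beta j}\big)^{2^{k-1-j}}M_0^{2^k}.
\end{align*}
Taking the power $1/q_k=1/(2^kq_*)$ then gives
\begin{align*}
M_k^{1/q_k}\le \big(Aq_*^\beta\big)^{\frac{1}{q_*}\sum_j 2^{-j-1}}2^{\frac{\beta}{q_*}\sum_j j2^{-j-1}}M_0^{1/q_*}.
\end{align*}
Both series converge, so $\|\nabla z\|_{L^{q_k}(\Omega)}\le C M_0^{1/q_*}$ with $C$ independent of $k$. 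Letting $k\to\infty$ gives $\|\nabla z\|_{L^\infty(\Omega)}\le CM_0^{1/q_*}$.

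It remains to bound $M_0=\int_\Omega|\nabla z|^{q_*}dx+1$ uniformly in $t>0$. Write $\nabla z=(1-\alpha)v^{-\alpha}\nabla v$, so $|\nabla z|^{q_*}=(1-\alpha)^{q_*}\big(\frac{|\nabla v|^2}{v}\big)^{q_*/2}v^{(1-2\alpha)q_*/2}$. Since $\alpha<\frac12$, the power of $v$ is nonnegative. Elliptic comparison for $0=\Delta v-v+u$ gives $\|v\|_{L^\infty}\le\|u\|_{L^\infty}\le L_1$ by \eqref{lem111}. Then \eqref{131} with $p=k=q_*/2$, or Lemma \ref{lem13} with $p\ge3$, bounds $\int_\Omega \frac{|\nabla v|^{q_*}}{v^{q_*/2}}dx$ by $C\int_\Omega u^{q_*/2}dx+C\int_\Omega v^{q_*/2}dx\le C(L_1)$. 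This gives $M_0\le C(\chi,\alpha,r,\mu,\Omega)$ for all $t>0$, and \eqref{lem431} follows.

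The main obstacle is the bookkeeping in the iteration. Two points need checking. First, every $q_{k-1}$ must satisfy the hypothesis $q>\max\{1+\theta,n\}$ of Lemma \ref{lem42}, which holds because the sequence is increasing from $q_*$. Second, the polynomial growth $q^{\beta}$ must be absorbed by the doubly exponential normalisation $2^{-k}$; this is guaranteed since $\sum j2^{-j}<\infty$. The uniform bound on the starting integral is routine, given the sign condition $\alpha<\frac12$ that was already used to define $\theta$.
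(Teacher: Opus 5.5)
Your proposal is correct and follows essentially the same route as the paper: a Moser iteration of Lemma \ref{lem42} along dyadic exponents, with the polynomial factor $q^{\beta}$ absorbed because $\sum_j j2^{-j}<\infty$. The only difference is the base case. You write $|\nabla v|v^{-\alpha}=\big(\frac{|\nabla v|^2}{v}\big)^{1/2}v^{\frac{1}{2}-\alpha}$ and use $\|v\|_{L^\infty(\Omega)}\le\|u\|_{L^\infty(\Omega)}$, whereas the paper uses $\|\nabla v\|_{L^\infty(\Omega)}\le C_{31}$ from elliptic regularity, the splitting $|\nabla v|^{1-2\alpha}\big(\frac{|\nabla v|^2}{v}\big)^{\alpha}$, and \eqref{131} at $p=k=\alpha 2^{k_0}\ge 3$. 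In your version, add $q_*\ge 6$ to the choice of $q_*$, so that \eqref{131}, which needs $p\ge 3$ and $k\in(2,2p-2)$, applies with $p=k=q_*/2$.
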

{\bf Proof.}\
 If $\alpha\in(0,\frac{1}{2})$ and $\mu>\tilde\mu_0$, based on the elliptic regularity theory with the boundedness of $u$ in \eqref{lem111}, there exists some $C_{31}=C_{31}(\chi,\alpha,r,\mu,\Omega)>0$ such that
\begin{align}\label{4260}
\|\nabla v\|_{L^\infty(\Omega)}\le C_{31}.
\end{align}
Denote
$k_0:=\inf\big\{{k\in\mathbb{N}}|2^k\ge \max\{n,1+\theta,\frac{3}{\alpha}\}\big\}$, we get by \eqref{131} for $k=p$ and \eqref{4260} that
\begin{align}\label{426}
\Big(\int_\Omega |\nabla z|^{2^{k_0}}dx+1\Big)^\frac{1}{2^{k_0}}&\le(1-\alpha)\Big(\int_\Omega \frac{|\nabla v|^{2^{k_0}}}{v^{\alpha{2^{k_0}}}}dx\Big)^\frac{1}{2^{k_0}}+1\nonumber\\
&\le \|\nabla v\|_{L^\infty(\Omega)}^{1-2\alpha}\Big(\int_\Omega \big(\frac{|\nabla v|^2}{v}\big)^{\alpha{2^{k_0}}}dx\Big)^\frac{1}{2^{k_0}}+1\nonumber\\
&\le \|\nabla v\|_{L^\infty(\Omega)}^{1-2\alpha}\Big(C_1\int_\Omega u^{\alpha{2^{k_0}}}dx+C_2\int_\Omega v^{\alpha{2^{k_0}}}dx\Big)^\frac{1}{2^{k_0}}+1\nonumber\\
&\le C_{32}
\end{align}
with $C_{32}=1+|\Omega|^\frac{1}{{2^{k_0}}}
C_{31}^{1-2\alpha}L_1^{\alpha}\big(C_1(\alpha{2^{k_0}})+C_2(\alpha{2^{k_0}},\Omega)\big)^\frac{1}{2^{k_0}}$ due to $\int_\Omega v^{\alpha{2^{k_0}}}dx\le \int_\Omega u^{\alpha{2^{k_0}}}dx$. Now, let $q_k:=2^k$ for $k=k_0,k_0+1,\cdots,$
and
\begin{align*}
M(2^k)=\Big(\int_\Omega |\nabla z|^{2^k}dx+1\Big)^\frac{1}{2^k}, ~~k=k_0,k_0+1,\cdots,
\end{align*}
then it is obtained by \eqref{lem421} and \eqref{426} that
\begin{align*}
M(2^{k+1})&\le C_{46}^\frac{1}{2^{k+1}}2^\frac{(2n+2+\theta_0)k}{(2-\theta_0)2^{k+1}}M(2^k)\nonumber\\
&\le C_{46}^{{\sum_{i={k_0}}^{k}}\frac{1}{2^{i+1}}}\cdot2^{\frac{2n+6-\theta_0}{2-\theta_0}
\sum_{i=k_0}^{k}\frac{i}{2^{i+1}}}\cdot M(2^{k_0})\nonumber\\
&=C_{32}C_{46}^{\frac{1}{2^{k_0}}
}2^{\frac{(2n+2+\theta_0)(k_0+1)}{(2-\theta_0) 2^{k_0}}}.
\end{align*}
Letting $k\rightarrow\infty$, then
\begin{align*}
\|\nabla z\|_{L^\infty(\Omega)}\le  L_6
\end{align*}
with $ L_6=C_{32}C_{46}^{\frac{1}{2^{k_0}}}2^{\frac{(2n+2+\theta_0)(k_0+1)}{(2-\theta_0)2^{k_0}}},$
which concludes \eqref{lem431}. \qquad$\Box$\medskip

Based on the boundedness of $u,\nabla z$, we will get the desired H\"{o}lder regularity of $(u,v)$.

\begin{lemma}\label{lem44}
If $\alpha\in(0,\frac{1}{2})$ and $\mu>\tilde\mu_0$, there exists some $\delta_0\in(0,1)$ and $L_7=L_7(\chi,\alpha,r,\mu,\Omega)>0$ such that
\begin{align}\label{lem441}
\|u\|_{C^{\delta_0,\frac{\delta_0}{2}}(\bar\Omega \times[t,t+1])}+\|v\|_{C^{\delta_0,\frac{\delta_0}{2}}(\bar\Omega \times[t,t+1])}\le L_7~~for~all~ t>1.
\end{align}
\end{lemma}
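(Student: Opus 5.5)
We will treat the $u$-equation of \eqref{pp} as a linear parabolic equation in divergence form with bounded coefficients. Write it as
\begin{align*}
u_t=\nabla\cdot\big(\nabla u-\tfrac{\chi}{1-\alpha}u\nabla z\big)+ru-\mu u^2 .
\end{align*}
By Lemma \ref{lem11} we have $\|u\|_{L^\infty(\Omega)}\le L_1$ for all $t>0$, and by Lemma \ref{lem43} we have $\|\nabla z\|_{L^\infty(\Omega)}\le L_6$. Hence the flux perturbation $b(x,t):=\frac{\chi}{1-\alpha}u\nabla z$ and the source $f:=ru-\mu u^2$ are both bounded in $L^\infty(\Omega\times(0,\infty))$, with bounds depending only on $\chi,\alpha,r,\mu,\Omega$.

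The plan is to invoke the standard H\"older regularity result for bounded weak solutions of quasilinear parabolic equations of the form $u_t=\nabla\cdot a(x,t,u,\nabla u)+f$ under Neumann conditions, such as Porzio--Vespri or Ladyzhenskaya--Solonnikov--Ural'tseva. The structure conditions are easily checked:
\begin{align*}
a\cdot\nabla u\ge \tfrac12|\nabla u|^2-C,\qquad |a|\le |\nabla u|+C,\qquad |f|\le C .
\end{align*}
To avoid the initial layer, we apply the result on the cylinders $\bar\Omega\times[t-1,t+1]$ for $t>1$. It yields $\delta_1\in(0,1)$ and a constant $C$, both independent of $t$, with $\|u\|_{C^{\delta_1,\delta_1/2}(\bar\Omega\times[t,t+1])}\le C$. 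Since all bounds are uniform in time, translation invariance makes the estimate uniform for $t>1$.

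For $v$, we use the elliptic equation $-\Delta v+v=u(\cdot,t)$ with Neumann data. Since $u$ is bounded, $W^{2,p}$ estimates for every $p<\infty$ give $\|v(\cdot,t)\|_{C^{1+\delta}(\bar\Omega)}\le C$, uniformly in $t$, which controls the spatial H\"older norm. For the time regularity, we note that $v(\cdot,t)-v(\cdot,s)=(-\Delta+1)^{-1}(u(\cdot,t)-u(\cdot,s))$. The maximum principle then gives
\begin{align*}
\|v(\cdot,t)-v(\cdot,s)\|_{L^\infty(\Omega)}\le\|u(\cdot,t)-u(\cdot,s)\|_{L^\infty(\Omega)}\le C|t-s|^{\delta_1/2}.
\end{align*}
Combining the spatial and temporal bounds and taking $\delta_0:=\min\{\delta_1,\delta\}$ yields \eqref{lem441}.

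The main obstacle is making sure the H\"older constant for $u$ is genuinely independent of $t$. This requires the $L^\infty$ bounds on the drift $u\nabla z$ to be global in time. That is exactly why we pass to $z=v^{1-\alpha}$: the original drift $u\nabla v/v^\alpha$ is not known to be bounded without a positive lower bound for $v$, whereas Lemma \ref{lem43} supplies $\nabla z\in L^\infty$ for $\alpha\in(0,\frac12)$. We will also check that the cited regularity theorem allows a convex smooth domain with Neumann boundary and only bounded measurable lower-order coefficients; Porzio--Vespri covers this case.
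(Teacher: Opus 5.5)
Your proposal is correct and follows essentially the same route as the paper: you rewrite the $u$-equation of \eqref{pp} in divergence form, verify the Porzio--Vespri structure conditions from $\|u\|_{L^\infty(\Omega)}\le L_1$ and $\|\nabla z\|_{L^\infty(\Omega)}\le L_6$, and treat $v$ by elliptic regularity. Your maximum-principle bound $\|v(\cdot,t)-v(\cdot,s)\|_{L^\infty(\Omega)}\le\|u(\cdot,t)-u(\cdot,s)\|_{L^\infty(\Omega)}$ for the temporal H\"older continuity of $v$ is a useful addition, since the paper's appeal to elliptic regularity and embeddings alone only gives the spatial part.
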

{\bf Proof.} The H\"{o}lder regularity of $v$ in \eqref{lem441} can be obtain by the standard elliptic regularity
theory along with boundedness of $u$ in \eqref{lem111} and the Imbedding Theorem. Rewrite the first equation of \eqref{pp} as
\begin{align*}
u_t&=\nabla\cdot(a(x,t,u,\nabla u))+b(x,t,u),~~x\in\Omega,~t>0,
\end{align*}
where $a(x,t,u,\nabla u)=\nabla u-\frac{\chi}{1-\alpha}u\nabla z$ and $b(x,t,u)=ru-\mu u^2$.
Based on the boundedness of $u$ in \eqref{lem111} and the Young inequality, we know
\begin{align}\label{441}
a(x,t,u,\nabla u)\cdot\nabla u=|\nabla u|^2-\frac{\chi}{1-\alpha}u\nabla u\cdot\nabla z\ge \frac{1}{2}|\nabla u|^2-C_{33}|\nabla z|^2
\end{align}
with $C_{33}=\frac{\chi^2}{2(1-\alpha)^2} L_1^2$, and
\begin{align}\label{442}
|a(x,t,u,\nabla u)|\le |\nabla u|+\frac{\chi}{1-\alpha}|u\nabla z|\le |\nabla u|+C_{34}|\nabla z|
\end{align}
with $C_{34}=\frac{\chi }{1-\alpha}L_1$, and
\begin{align}\label{443}
|b(x,t,u)|\le ru+\mu u^2\le C_{35}
\end{align}
with $C_{35}=r L_1+\mu L_1^2$. Hence, a combination of \eqref{441}--\eqref{443} entails the H\"{o}lder estimate of $u$ in \eqref{lem441} based on the classical regularity estimate established in \cite[Theorem 1.3]{PV1993} due to $\nabla z\in L^\infty((0,\infty),L^\infty(\Omega))$ by \eqref{lem431}. Hence, the proof is complete. \qquad$\Box$\medskip

In order to obtain the exponential convergence of $(u,v)$, it is necessary to establish the estimate of  $\nabla u$ in $L^{q_0}$-norm for some $q_0>n$ which can be arrived by dealing with $\int_\Omega |\Delta z|^{2p}dx$.

\begin{lemma}\label{lem45}
If $\alpha\in(0,\frac{1}{2})$ with $\mu>\tilde\mu_0$, then for $p\in(1,\min\{\frac{n+2}{2},\frac{1}{2\alpha}\})$ it admits that
\begin{align}\label{lem451}
\int_\Omega |\Delta z|^{2p}dx\le L_8,~~
\end{align}
with some $L_8=L_8(\chi,\alpha,r,\mu,\Omega)>0$.
\end{lemma}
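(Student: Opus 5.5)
The most direct route is to read $\Delta z$ off the $z$-equation in \eqref{pp}. That equation gives the pointwise identity
\begin{align*}
\Delta z=-\frac{\alpha}{1-\alpha}\frac{|\nabla z|^2}{z}+(1-\alpha)z-(1-\alpha)uz^{-\frac{\alpha}{1-\alpha}},
\end{align*}
so it suffices to bound each of the three terms in $L^{2p}(\Omega)$ uniformly in time. The middle term is harmless: $z=v^{1-\alpha}$ and $\|v\|_{L^\infty(\Omega)}\le C$ by elliptic regularity together with \eqref{lem111}. The first and third terms are singular where $v$ is small. For them I will use $z^{-\frac{\alpha}{1-\alpha}}=v^{-\alpha}$ and $\frac{|\nabla z|^2}{z}=(1-\alpha)^2\frac{|\nabla v|^2}{v^{1+\alpha}}$. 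The aim is to express everything through quantities controlled by Lemma \ref{lem13} and Lemma \ref{lem43}, in particular \eqref{lem131}, which gives $\int_\Omega \frac{u}{v}dx\le|\Omega|$ and $\int_\Omega\frac{|\nabla v|^2}{v^2}dx\le |\Omega|$.

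For the third term, \eqref{lem111} gives
\begin{align*}
\int_\Omega u^{2p}v^{-2p\alpha}dx\le L_1^{2p-1}\int_\Omega u\,v^{-2p\alpha}dx .
\end{align*}
Since $2p\alpha<1$ by the restriction $p<\frac{1}{2\alpha}$, write $u\,v^{-2p\alpha}=\big(\frac{u}{v}\big)^{2p\alpha}u^{1-2p\alpha}$ and apply H\"older with exponents $\frac{1}{2p\alpha}$ and $\frac{1}{1-2p\alpha}$. Then \eqref{lem131} and the $L^\infty$ bound on $u$ bound this term. This is exactly where the constraint $p<\frac{1}{2\alpha}$ enters.

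For the first term, the bound $\|\nabla z\|_{L^\infty(\Omega)}\le L_6$ from \eqref{lem431} gives
\begin{align*}
\int_\Omega\frac{|\nabla z|^{4p}}{z^{2p}}dx\le L_6^{2p}\int_\Omega\frac{|\nabla z|^{2p}}{z^{2p}}dx=(1-\alpha)^{2p}L_6^{2p}\int_\Omega\frac{|\nabla v|^{2p}}{v^{2p}}dx .
\end{align*}
This weight is too singular for a direct bound. Instead I would keep the $L^\infty$ bound only on part of the gradient. With $|\nabla v|=(1-\alpha)^{-1}v^{\alpha}|\nabla z|$, write
\begin{align*}
\frac{|\nabla z|^{4p}}{z^{2p}}=(1-\alpha)^{4p-2}\,|\nabla z|^{4p-2}\,\frac{|\nabla v|^2}{v^2}\,v^{2-2p(1-\alpha)}\cdot\big(\text{bounded}\big),
\end{align*}
then use $\|\nabla z\|_{L^\infty(\Omega)}\le L_6$ and the bound $\int_\Omega\frac{|\nabla v|^2}{v^2}dx\le|\Omega|$ from \eqref{lem131}. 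This works provided the leftover power of $v$ is nonnegative, which reduces to an inequality like $p(1-\alpha)\le 1+\alpha$ or a variant. I am not certain the admissible range coincides with $p<\min\{\frac{n+2}{2},\frac{1}{2\alpha}\}$. If the exponents do not close this way, the fallback is \eqref{131} with suitable $k$ and $p$, i.e.\ $\int_\Omega \frac{|\nabla v|^{2p}}{v^{k}}dx\le C_1\int_\Omega \frac{u^p}{v^{k-p}}dx+C_2\int_\Omega v^{2p-k}dx$. The term $\int_\Omega\frac{u^p}{v^{k-p}}dx$ is then treated by the same H\"older trick as the third term. The requirement $k\in(2,2p-2)$ is presumably the source of the other restriction $p<\frac{n+2}{2}$, possibly combined with Gagliardo--Nirenberg to reach $\int_\Omega|\nabla u|^{q_0}dx$ afterwards.

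The main obstacle is this first term, because the singular factor $z^{-2p}$ has to be absorbed by the $v$-integrability coming from \eqref{lem131} and \eqref{131}. I will first try the $L_6$ interpolation, matching powers so that only $\frac{|\nabla v|^2}{v^2}$ and $\frac{u}{v}$ remain, and switch to \eqref{131} if a negative power of $v$ survives. Summing the three bounds then gives \eqref{lem451} with $L_8$ depending on $L_1$, $L_6$ and $|\Omega|$.
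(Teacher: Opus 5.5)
Your reduction to the three terms of the $z$-equation is the same as the paper's, which reaches \eqref{451} by testing with $|\Delta z|^{2(p-1)}\Delta z$. Your handling of $\int_\Omega v^{2(1-\alpha)p}dx$ and of $\int_\Omega u^{2p}v^{-2\alpha p}dx$ via \eqref{lem131} and $2\alpha p<1$ is correct. The gap is the first term, $\int_\Omega \frac{|\nabla v|^{4p}}{v^{2(1+\alpha)p}}dx$.

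\textbf{Your primary route fails for every $p>1$.} Since $|\nabla z|^2=(1-\alpha)^2v^{-2\alpha}|\nabla v|^2$ (you dropped the factor $v^{-2\alpha}$), the correct identity is
\[
\frac{|\nabla z|^{4p}}{z^{2p}}=(1-\alpha)^2|\nabla z|^{4p-2}\,\frac{|\nabla v|^2}{v^2}\,v^{2(1-\alpha)(1-p)},
\]
and the leftover exponent $2(1-\alpha)(1-p)$ is negative. This is structural, not bookkeeping. Write $\frac{|\nabla z|^{4p}}{z^{2p}}=|\nabla z|^{2p}\big(\frac{|\nabla z|}{z}\big)^{2p}$. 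Then \eqref{lem131} only gives $\frac{\nabla z}{z}\in L^2(\Omega)$, and an $L^\infty$ bound on $\nabla z$ cannot supply the missing integrability of $\big(\frac{|\nabla z|}{z}\big)^{2p}$.

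\textbf{Your fallback, applied directly, also fails in part of the range.} It means using \eqref{131} with gradient exponent $2p$ and $k=2(1+\alpha)p$. The requirement $k<4p-2$ is equivalent to $p>\frac{1}{1-\alpha}$. This is a lower bound on $p$, not the origin of $p<\frac{n+2}{2}$ as you guessed. Meanwhile the term $\int_\Omega\frac{u^{2p}}{v^{2\alpha p}}dx$ that it produces needs $2\alpha p\le 1$. Both conditions hold for some $p$ only when $\alpha<\frac13$; in that case H\"older on $\Omega$ covers the smaller $p$. For $\alpha\in[\frac13,\frac12)$ your argument does not close.

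\textbf{The missing idea} is to raise the gradient power first, by interpolating against $\frac{|\nabla v|^2}{v^2}$, whose integral is at most $|\Omega|$. The paper uses Young's inequality with exponents $\frac{n+1}{n+2-2p}$ and $\frac{n+1}{2p-1}$:
\[
\frac{|\nabla v|^{4p}}{v^{2(1+\alpha)p}}=\Big(\frac{|\nabla v|^2}{v^2}\Big)^{\frac{n+2-2p}{n+1}}\cdot\frac{|\nabla v|^{4p-\frac{2(n+2-2p)}{n+1}}}{v^{2p(1+\alpha)-\frac{2(n+2-2p)}{n+1}}}\le\frac{|\nabla v|^2}{v^2}+\frac{|\nabla v|^{2(n+2)}}{v^{\tilde p}},
\]
where $\tilde p=\frac{(2(1+\alpha)(n+1)+4)p-2(n+2)}{2p-1}$.
\begin{itemize}
\item The restriction $p<\frac{n+2}{2}$ is exactly what makes these Young exponents admissible.
\item One checks that $\tilde p>2$ for $p>1$, and that $\tilde p<n+3$ is equivalent to $2\alpha p<1$.
\item Hence \eqref{131} applies with gradient exponent $n+2$ and $k=\tilde p\in(2,2n+2)$.
\item If $\tilde p\le n+2$, the resulting $\int_\Omega\frac{u^{n+2}}{v^{\tilde p-n-2}}dx$ is bounded by $\|u\|_{L^\infty(\Omega)}$ and $\|v\|_{L^\infty(\Omega)}$.
\item Otherwise $\tilde p-n-2<1$, and the same $\frac{u}{v}$ trick you used for the third term applies.
\end{itemize}
With this step inserted your proof goes through, and Lemma \ref{lem43} is not needed.
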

{\bf Proof.}\
Multiply \eqref{pp}$_2$ by $|\Delta z|^{2(p-1)}\Delta z$ with $p>1$ to have
\begin{align*}
0&=\int_\Omega |\Delta z|^{2(p-1)}\Delta z\big(\Delta z+\frac{\alpha}{1-\alpha}\frac{|\nabla z|^2}{z}-(1-\alpha) z+(1-\alpha) u{z^{-\frac{\alpha}{1-\alpha}}}\big)dx\nonumber\\
&=\int_\Omega |\Delta z|^{2p}dx+\frac{\alpha}{1-\alpha}\int_\Omega |\Delta z|^{2(p-1)}\Delta z\frac{|\nabla z|^2}{z}dx-(1-\alpha)\int_\Omega z|\Delta z|^{2(p-1)}\Delta zdx\nonumber\\
&~~+(1-\alpha)\int_\Omega |\Delta z|^{2(p-1)}\Delta z uz^{-\frac{\alpha}{1-\alpha}}dx.
\end{align*}
By the Young inequality, we get
\begin{align*}
\int_\Omega |\Delta z|^{2p}dx\le \frac{3}{4}\int_\Omega |\Delta z|^{2p}dx+4^{2p-1}\int_\Omega (\frac{|\nabla z|^2}{z})^{2p}dx+4^{2p-1}\int_\Omega z^{2p}dx+4^{2p-1}\int_\Omega (uz^{-\frac{\alpha}{1-\alpha}})^{2p}dx,
\end{align*}
which along with the definition of $z=v^{1-\alpha}$ shows that
\begin{align}\label{451}
\int_\Omega |\Delta z|^{2p}dx\le 4^{2p}\int_\Omega \frac{|\nabla v|^{4p}}{v^{2(1+\alpha)p}}dx+4^{2p}\int_\Omega v^{2(1-\alpha)p}dx+4^{2p}\int_\Omega \frac{u^{2p}}{v^{2\alpha p}}dx.
\end{align}
If $\alpha\in(0,\frac{1}{2})$ and $\mu>\tilde\mu_0$ with $p\in(1,\min\{\frac{1}{2\alpha},\frac{n+2}{2}\})$, it is known by the Young inequality with \eqref{lem11} and \eqref{lem131} that
\begin{align}\label{452}
\int_\Omega \frac{u^{2p}}{v^{2\alpha p}}dx\le \|u\|_{L^\infty(\Omega)}^{(1-2\alpha)p}\int_\Omega (\frac{u}{v})^{2\alpha p}dx\le 2|\Omega|L_1^{(1-2\alpha)p},
\end{align}
and
\begin{align}\label{453}
\int_\Omega \frac{|\nabla v|^{4p}}{v^{2(1+\alpha)p}}dx&=\int_\Omega \big(\frac{|\nabla v|^2}{v^2}\big)^\frac{n+2-2p}{n+1}\frac{|\nabla v|^{4p-\frac{2(n+2-2p)}{n+1}}}{v^{2p(1+\alpha)-\frac{2(n+2-2p)}{n+1}}}\nonumber\\
&\le \int_\Omega \frac{|\nabla v|^2}{v^2}dx+\int_\Omega \frac{|\nabla v|^{2(n+2)}}{v^\frac{(2(1+\alpha)(n+1)+4)p-2(n+2)}{2p-1}}dx\nonumber\\
&\le |\Omega|+\int_\Omega \frac{|\nabla v|^{2(n+2)}}{v^\frac{(2(1+\alpha)(n+1)+4)p-2(n+2)}{2p-1}}dx.
\end{align}
Let $\tilde p:=\frac{(2(1+\alpha)(n+1)+4)p-2(n+2)}{2p-1}$. Then for $\alpha\in(0,\frac{1}{2})$ and $\mu>\tilde\mu_0$ with $p\in(1,\min\{\frac{1}{2\alpha},\frac{n+2}{2}\})$, it is known that $2<\tilde p<n+3\le 2n+2$ and then by \eqref{131} that
\begin{align*}
\int_\Omega \frac{|\nabla v|^{2(n+2)}}{v^{\tilde p}}dx&\le C_1\int_\Omega \frac{u^{n+2}}{v^{\tilde p-n-2}}dx+C_2\int_\Omega v^{2(n+2)-\tilde p}dx
\end{align*}
which along with \eqref{lem111} yields that
\begin{align}\label{454}
\int_\Omega \frac{|\nabla v|^{2(n+2)}}{v^{\tilde p}}dx\le C_1L_1^{n+2}C_{36}^{n+2-\tilde p}|\Omega|+C_2C_{36}^{2n+2-\tilde p}|\Omega|
\end{align}
for the case of $\tilde p\in(2,n+2]$ via the fact that $\|v\|_{L^\infty(\Omega)}\le C_{36} $ due to the elliptic regularity theory with \eqref{lem111} with some $C_{36}=C_{36}(\chi,\alpha,r,\mu,\Omega )>0$, and by \eqref{lem131} that
\begin{align}\label{455}
\int_\Omega \frac{|\nabla v|^{2(n+2)}}{v^{\tilde p}}dx&\le C_1L_1^{2(n+2)-\tilde p}\int_\Omega (\frac{u}{v})^{\tilde p-n-2}dx+C_2C_{36}^{2(n+2)-\tilde p}|\Omega|\nonumber\\
&\le 2C_1L_1^{2(n+2)-\tilde p}|\Omega|+C_2C_{36}^{2(n+2)-\tilde p}|\Omega|
\end{align}
for the case of $\tilde p\in(n+2,n+3)$. Hence, for $\alpha\in(0,\frac{1}{2})$ and $\mu>\tilde\mu_0$ with $p\in(1,\min\{\frac{1}{2\alpha},\frac{n+2}{2}\})$, a combination of \eqref{451}--\eqref{455} shows that
\begin{align*}
\int_\Omega |\Delta z|^{2p}dx\le L_8
\end{align*}
with some $L_8=L_8(\chi,\alpha,r,\mu,\Omega)>0$, and then completes the proof of \eqref{lem451}.
\qquad$\Box$\medskip

Now, we establish the desired estimate of  $\nabla u$ in $L^{q_0}$-norm for some $q_0>n$.
\begin{lemma}\label{lem46}
If $\alpha\in(0,\frac{n+2}{n^2+4})$ with $\mu>\tilde\mu_0$, then there exists some $q_0>n$ and $L_9=L_9(\chi,\alpha,r,\mu,\Omega)>0$ such that
\begin{align}\label{lem461}
\int_\Omega |\nabla u|^{q_0}dx\le L_9,~~t>0.
\end{align}
\end{lemma}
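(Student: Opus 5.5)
We will test the $u$-equation of \eqref{pp} against $-\nabla\cdot(|\nabla u|^{q_0-2}\nabla u)$. Here $q_0>n$ is chosen with $q_0\le 2p$ for some admissible $p\in(1,\min\{\frac{n+2}{2},\frac{1}{2\alpha}\})$ from Lemma \ref{lem45}. Such a choice exists exactly when $\min\{n+2,\frac1\alpha\}>n$, i.e.\ always for $n=2$ and whenever $\alpha<\frac1n$ in general. The hypothesis $\alpha<\frac{n+2}{n^2+4}$ is what leaves room for an interpolation exponent, as explained below. On a convex domain $\frac{\partial |\nabla u|^2}{\partial\nu}\le 0$. The standard computation then gives
\begin{align*}
\frac{1}{q_0}\frac{d}{dt}\int_\Omega|\nabla u|^{q_0}dx+\frac{2(q_0-2)}{q_0^2}\int_\Omega\big|\nabla|\nabla u|^{\frac{q_0}{2}}\big|^2dx+\frac12\int_\Omega|\nabla u|^{q_0-2}|D^2u|^2dx\le C\int_\Omega|\nabla u|^{q_0-2}\big|\nabla\cdot(u\nabla z)\big|^2dx+C\int_\Omega|\nabla u|^{q_0}dx.
\end{align*}
In this estimate the logistic terms have been absorbed using $\|u\|_{L^\infty(\Omega)}\le L_1$ from Lemma \ref{lem11}.

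Next we expand $\nabla\cdot(u\nabla z)=\nabla u\cdot\nabla z+u\Delta z$. Lemma \ref{lem43} gives $|\nabla z|\le L_6$, so the first piece contributes $C\int_\Omega|\nabla u|^{q_0}dx$. The second piece gives $C\int_\Omega|\nabla u|^{q_0-2}|\Delta z|^2dx$. By H\"older this is at most $C\|\Delta z\|_{L^{2p}(\Omega)}^2\big(\int_\Omega|\nabla u|^{\frac{(q_0-2)p}{p-1}}dx\big)^{\frac{p-1}{p}}$, and Lemma \ref{lem45} makes the first factor bounded by $L_8^{1/p}$. The term $\int_\Omega|\nabla u|^{\frac{(q_0-2)p}{p-1}}dx$ is then handled by Gagliardo--Nirenberg applied to $w=|\nabla u|^{q_0/2}$. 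We interpolate between $\|\nabla w\|_{L^2(\Omega)}$ and a low norm of $\nabla u$. For that low norm we use a uniform bound on $\int_\Omega|\nabla u|^2dx$, obtained by the same testing procedure with exponent $2$, where only $\|\Delta z\|_{L^{2}(\Omega)}$ appears (Lemma \ref{lem45} with $p$ close to $1$). Alternatively, the H\"older bound of Lemma \ref{lem44} supplies the needed control. The outcome is
\[
C\int_\Omega|\nabla u|^{q_0-2}|\Delta z|^2dx\le\frac{q_0-2}{q_0^2}\int_\Omega|\nabla w|^2dx+C.
\]
This absorption works precisely when $\frac{(q_0-2)p}{(p-1)q_0}<\frac{n}{n-2}$ (or no restriction if $n=2$), up to the exponent bookkeeping in Young's inequality. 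The term $C\int_\Omega|\nabla u|^{q_0}dx$ is treated the same way and also produces an extra $-\int_\Omega|\nabla u|^{q_0}dx$ on the left.

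The result is $y'+y\le C$ for $y=\int_\Omega|\nabla u|^{q_0}dx$, valid for $t>1$. Since $u_0\in C^1(\bar\Omega)$ and the solution is classical, $y$ is bounded on $[0,1]$. An ODE comparison then gives \eqref{lem461}.

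The main obstacle is the exponent compatibility. We need $q_0>n$, $q_0\le 2p$, $p<\min\{\frac{n+2}{2},\frac1{2\alpha}\}$, and the Gagliardo--Nirenberg condition above, all at once. We expect that expressing $p$ as close to $\frac1{2\alpha}$ and $q_0$ slightly above $n$ reduces these requirements to an inequality equivalent to $\alpha<\frac{n+2}{n^2+4}$. We will first verify this algebra in the limiting case $q_0=n$, $p=\frac1{2\alpha}$, and then choose $q_0$ and $p$ by continuity.
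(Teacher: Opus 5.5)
Your overall scheme matches the paper's. You take a uniform bound on $\int_\Omega|\nabla u|^2dx$, then compute $\frac{d}{dt}\int_\Omega|\nabla u|^{q_0}dx$ on the convex domain, split $\nabla\cdot(u\nabla z)=\nabla u\cdot\nabla z+u\Delta z$, and apply Gagliardo--Nirenberg to $w=|\nabla u|^{q_0/2}$ against $\|w\|_{L^{4/q_0}(\Omega)}=\|\nabla u\|_{L^2(\Omega)}^{q_0/2}$. However, your exponent selection does not cover the stated range.

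\textbf{The gap.} You impose $q_0\le 2p$. Together with $q_0>n$ and $p<\frac{1}{2\alpha}$, this forces $\alpha<\frac1n$, as you note yourself. For $n\ge3$ one has $\frac1n<\frac{n+2}{n^2+4}$, so for $\alpha\in[\frac1n,\frac{n+2}{n^2+4})$ your scheme has no admissible $(q_0,p)$. Your closing expectation is also false. The constraint $q_0\le 2p$ is exactly $\frac{(q_0-2)p}{p-1}\le q_0$, so your Sobolev condition $\frac{(q_0-2)p}{(p-1)q_0}<\frac{n}{n-2}$ is then automatic. Everything therefore collapses to $\alpha<\frac1n$, not to $\alpha<\frac{n+2}{n^2+4}$. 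In the paper that threshold comes from a different step. It splits $|\nabla u|^{2(q-1)}|\Delta z|^2\le|\nabla u|^{\frac{2(q-1)p_0}{p_0-1}}+|\Delta z|^{2p_0}$ pointwise by Young. Absorbing the first term forces $p_0>\frac{2+qn}{2+n}$, and with $q=\frac{q_0}{2}>\frac n2$ and $p_0<\frac1{2\alpha}$ this gives exactly $\alpha<\frac{n+2}{n^2+4}$.

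\textbf{The repair.} Drop $q_0\le 2p$. Dominating the H\"older term by $\int_\Omega|\nabla u|^{q_0}dx$ is convenient but not needed. Your H\"older step is in fact more economical than the paper's Young splitting. It yields $\|\Delta z\|_{L^{2p}(\Omega)}^2\|w\|_{L^s(\Omega)}^{2(q_0-2)/q_0}$ with $s=\frac{2(q_0-2)p}{(p-1)q_0}$. The power of $\|\nabla w\|_{L^2(\Omega)}$ is then $\frac{2a(q_0-2)}{q_0}<2$ for every GN exponent $a\in(0,1]$. So the only requirement really is $s<\frac{2n}{n-2}$, with none for $n=2$. At $q_0=n$ this reads $p>\frac{n^2}{4(n-1)}$. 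This value always lies below $\frac{n+2}{2}$, and lies below $\frac1{2\alpha}$ iff $\alpha<\frac{2(n-1)}{n^2}$. Since $\frac{2(n-1)}{n^2}-\frac{n+2}{n^2+4}=\frac{(n-2)(n^2-2n+4)}{n^2(n^2+4)}\ge0$, the hypothesis suffices, and continuity then gives some $q_0>n$. With this change your argument proves the lemma, even on a slightly larger $\alpha$-range than the paper's.

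\textbf{Smaller remarks.} Your $L^2$ bound from testing with $-\Delta u$ closes directly via $\int_\Omega|\nabla u|^2dx\le\|u\|_{L^2(\Omega)}\|\Delta u\|_{L^2(\Omega)}$ and $\|u\|_{L^\infty(\Omega)}\le L_1$. This is simpler than the paper's coupling with $\int_\Omega u\ln u\,dx$. The suggested alternative via Lemma \ref{lem44} does not work, since H\"older continuity of $u$ gives no $L^r$ control of $\nabla u$.
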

{\bf Proof.}\
By the Young inequality with \eqref{lem431}, \eqref{lem111} and \eqref{lem451}, we know that
\begin{align}\label{461}
\frac{1}{2}\frac{d}{dt}\int_\Omega |\nabla u|^2dx&=\int_\Omega \nabla u\cdot\nabla (\Delta u-\frac{\chi}{1-\alpha}\nabla \cdot(u\nabla z)+ru-\mu u^2)dx\nonumber\\
&=-\int_\Omega |\Delta u|^2dx+\frac{\chi}{1-\alpha}\int_\Omega \Delta u(\nabla u\cdot\nabla z+u\Delta z)dx+r\int_\Omega |\nabla u|^2dx-2\mu\int_\Omega u|\nabla u|^2dx\nonumber\\
&\le \frac{\chi^2}{2(1-\alpha)^2}\int_\Omega |\nabla z|^2|\nabla u|^2dx+\frac{\chi^2}{2(1-\alpha)^2}\int_\Omega u^2|\Delta z|^2dx+r\int_\Omega |\nabla u|^2dx-2\mu\int_\Omega u|\nabla u|^2dx\nonumber\\
&\le\big(\frac{\chi^2}{2(1-\alpha)^2}L_6^2+r\big)\int_\Omega |\nabla u|^2dx-2\mu\int_\Omega u|\nabla u|^2dx+\frac{\chi^2L_1^2}{2(1-\alpha)^2}\int_\Omega |\Delta z|^2dx\nonumber\\
&\le -\frac{1}{2}\int_\Omega |\nabla u|^2dx+C_{37}\int_\Omega \frac{|\nabla u|^2}{u}dx+C_{38},~~t>0
\end{align}
with $C_{37}=\frac{1}{8\mu}(\frac{\chi^2}{2(1-\alpha)^2}L_6^2+r+\frac{1}{2})^2$ and $C_{38}=\frac{\chi^2L_1^2}{2(1-\alpha)^2}L_8$. In addition, a direct calculation with \eqref{lem111} and \eqref{lem431} shows that
\begin{align}\label{462}
\frac{d}{dt}\int_\Omega u\ln udx&=\int_\Omega \ln u(\Delta u-\frac{\chi}{1-\alpha}\nabla\cdot(u\nabla z)+ru-\mu u^2)dx+r\int_\Omega udx-\mu\int_\Omega u^2dx\nonumber\\
&\le -\int_\Omega \frac{|\nabla u|^2}{u}dx+\frac{\chi}{1-\alpha}\int_\Omega \nabla u\cdot\nabla zdx+\int_\Omega (ru\ln u+ru-\mu u^2-\mu u^2\ln u)dx\nonumber\\
&\le -\int_\Omega u\ln u dx-\frac{1}{2}\int_\Omega \frac{|\nabla u|^2}{u}dx+\frac{\chi^2}{2(1-\alpha)^2}\int_\Omega u|\nabla z|^2dx+C_{39}\nonumber\\
&\le -\int_\Omega u\ln udx-\frac{1}{2}\int_\Omega \frac{|\nabla u|^2}{u}dx+C_{40},~~t>0
\end{align}
with $C_{39}=|\Omega|\max_{s>0}((r+1)s\ln s+rs-\mu s^2-\mu s^2\ln s)$ and $C_{40}=C_{39}+L_1L_6^2|\Omega|$. Hence, we get from \eqref{461} and \eqref{462} that
\begin{align*}
\frac{d}{dt}\Big\{\frac{1}{2}\int_\Omega |\nabla u|^2dx+2C_{37}\int_\Omega u\ln udx\Big\}\le -\Big\{\frac{1}{2}\int_\Omega |\nabla u|^2dx+2C_{37}\int_\Omega u\ln udx\Big\}+C_{38}+2C_{37}C_{40}
\end{align*}
for $t>0$, which yields that
\begin{align}\label{463}
\int_\Omega |\nabla u|^2dx&\le \max\big\{2C_{38}+4C_{37}C_{40},\int_\Omega |\nabla u_0|^2dx+4C_{37}\int_\Omega u_0\ln u_0dx \big\}-4C_{37}\int_\Omega u\ln udx\nonumber\\
&\le C_{41},~~t>0
\end{align}
with some $C_{41}>0$ due to $-s\ln s\le \frac{1}{e}$ for $s>0$.

Furthermore, based on the identity $2\nabla u\cdot\nabla \Delta u=\Delta |\nabla u|^2-2|D^2u|^2$, we know for $q>1$ that
\begin{align}\label{464}
\frac{d}{dt}\int_\Omega |\nabla u|^{2q}dx&=2q\int_\Omega |\nabla u|^{2(q-1)}\nabla u\cdot\nabla (\Delta u-\frac{\chi}{1-\alpha}\nabla \cdot(u\nabla z)+ru-\mu u^2)dx\nonumber\\
&=q\int_\Omega |\nabla u|^{2(q-1)}\Delta |\nabla u|^2dx-2q\int_\Omega |\nabla u|^{2(q-1)}|D^2u|^2dx\nonumber\\
&~~-\frac{2q\chi}{1-\alpha}\int_\Omega |\nabla z|^{2(q-1)}\nabla u\cdot\nabla [\nabla\cdot(u\nabla z)]dx+2qr\int_\Omega |\nabla z|^{2q}dx-2\mu q\int_\Omega u|\nabla u|^{2q}dx\nonumber\\
&\le -\frac{4(q-1)}{q}\int_\Omega |\nabla|\nabla u|^q|^2dx-2q\int_\Omega |\nabla u|^{2(q-1)}|D^2u|^2dx+2qr\int_\Omega |\nabla z|^{2q}dx\nonumber\\
&~~+\frac{2q\chi}{1-\alpha}\int_\Omega |\nabla z|^{2(q-1)}\Delta u \nabla\cdot(u\nabla z)dx\nonumber\\
&~~+\frac{2q(q-1)\chi}{1-\alpha}\int_\Omega |\nabla z|^{2(q-2)}\nabla|\nabla u|^2\cdot\nabla u\nabla\cdot(u\nabla z)dx,~~t>0.
\end{align}
By the Young inequality with \eqref{lem111} and \eqref{lem431}, we know that
\begin{align}\label{465}
\int_\Omega |\nabla z|^{2(q-1)}\Delta u \nabla\cdot(u\nabla z)dx&\le \epsilon_1\int_\Omega |\nabla u|^{2(q-1)}|\Delta u|^2dx+\frac{1}{4\epsilon_1}\int_\Omega |\nabla u|^{2(q-1)}(\nabla u\cdot\nabla z+u\Delta z)^2dx\nonumber\\
&\le n\epsilon_1\int_\Omega |\nabla u|^{2(q-1)}|D^2 u|^2dx+\frac{1}{2\epsilon_1}\int_\Omega |\nabla u|^{2q}|\nabla z|^2dx\nonumber\\
&~~+\frac{1}{2\epsilon_1}\int_\Omega u^2|\nabla u|^{2(q-1)}|\Delta z|^2dx\nonumber\\
&\le n\epsilon_1\int_\Omega |\nabla u|^{2(q-1)}|D^2 u|^2dx+\frac{L_6^2}{2\epsilon_1}\int_\Omega |\nabla u|^{2q}dx\nonumber\\
&~~+\frac{L_1^2}{2\epsilon_1}\int_\Omega |\nabla u|^{2(q-1)}|\Delta z|^2dx,
\end{align}
due to $|\Delta u|^2\le n|D^2 u|^2$ with $\epsilon_1>0$, and
\begin{align}\label{466}
\int_\Omega |\nabla z|^{2(q-2)}\nabla|\nabla u|^2\cdot\nabla u\nabla\cdot(u\nabla z)dx&\le \epsilon_2\int_\Omega |\nabla u|^{2(q-2)}|\nabla  |\nabla u|^2|^2dx\nonumber\\
&~~+\frac{1}{4\epsilon_2}\int_\Omega |\nabla u|^{2(q-1)}(\nabla u\cdot\nabla z+u\Delta z)^2dx\nonumber\\
&\le \frac{4\epsilon_2}{q^2}\int_\Omega |\nabla|\nabla u|^q|^2dx+\frac{L_6^2}{2\epsilon_2}\int_\Omega |\nabla u|^{2q}dx\nonumber\\
&~~+\frac{L_1^2}{2\epsilon_2}\int_\Omega |\nabla u|^{2(q-1)}|\Delta z|^2dx.
\end{align}
Taking $\epsilon_1=\frac{1-\alpha}{n\chi}$ and $\epsilon_2=\frac{1-\alpha}{4\chi}$, we know from \eqref{464}--\eqref{466} and \eqref{lem451} for $p_0\in(1,\min\{\frac{n+2}{2},\frac{1}{2\alpha}\})$ that
\begin{align}\label{467}
\frac{d}{dt}\int_\Omega |\nabla u|^{2q}dx&\le -\frac{2(q-1)}{q}\int_\Omega |\nabla|\nabla u|^q|^2dx+C_{42}\int_\Omega |\nabla u|^{2q}dx+C_{43}\int_\Omega |\nabla u|^{2(q-1)}|\Delta z|^2dx\nonumber\\
&\le -\frac{2(q-1)}{q}\int_\Omega |\nabla|\nabla u|^q|^2dx+C_{42}\int_\Omega |\nabla u|^{2q}dx+C_{43}\int_\Omega |\nabla u|^\frac{2(q-1)p_0}{p_0-1}dx+C_{43}\int_\Omega |\Delta z|^{2p_0}dx\nonumber\\
&\le -\frac{2(q-1)}{q}\int_\Omega |\nabla|\nabla u|^q|^2dx+C_{42}\int_\Omega |\nabla u|^{2q}dx+C_{43}\int_\Omega |\nabla u|^\frac{2(q-1)p_0}{p_0-1}dx+C_{44}
\end{align}
with $C_{42}=2qr+\frac{q\chi^2L_6^2}{(1-\alpha)^2}(n+4q-4)$, $C_{43}=\frac{q\chi^2L_1^2}{(1-\alpha)^2}(n+4q-4)$ and $C_{44}=C_{43}L_8$ for $t>0$. Applying the Gaglirado-Nirenberg inequality and the Poincar\'{e} inequality with \eqref{463}, we get for $q>1$ that
\begin{align}\label{468}
\int_\Omega |\nabla u|^{2q}dx&=\||\nabla u|^q\|_{L^2(\Omega)}^2\nonumber\\
&\le C_{GN}\|\nabla |\nabla u|^q\|_{L^2(\Omega)}^{2a_1}\||\nabla u|^q\|_{L^\frac{2}{q}(\Omega)}^{2(1-a_1)}+C_{GN}\||\nabla u|^q\|_{L^\frac{2}{q}(\Omega)}^2\nonumber\\
&\le \epsilon_3\int_\Omega |\nabla|\nabla u|^q|^2dx+C(\epsilon_3),
\end{align}
due to $a_1=\frac{\frac{qn}{2}-\frac{n}{2}}{1-\frac{n}{2}+\frac{qn}{2}}\in(0,1)$, with $\epsilon_3>0$ and $C(\epsilon_3)=\epsilon_3^{-\frac{a_1}{1-a_1}}C_{GN}^\frac{1}{1-a_1}C_{41}^q+C_{GN}C_{41}^q$. If $q\in(1,\min\{\frac{n+2-4\alpha}{2\alpha n},\frac{n+4}{2}\})$ with  $p_0\in\big(\frac{2+qn}{2+n},\min\{\frac{n+2}{2},\frac{1}{2\alpha},\frac{1}{(2-q)_+}\}\big)$, then we know
$$a_2=\frac{\frac{qn}{2}-\frac{q(p_0-1)n}{2(q-1)p_0}}{1-\frac{n}{2}+\frac{qn}{2}}\in(0,1)~~{\rm and}~~\frac{(q-1)p_0}{q(p_0-1)}a_2<1,$$
and then by the Gaglirado-Nirenberg inequality and the Poincar\'{e} inequality with \eqref{463} that
\begin{align}\label{469}
\int_\Omega |\nabla u|^\frac{2(q-1)p_0}{p_0-1}dx&=\||\nabla u|^q\|_{L^\frac{2(q-1)p_0}{q(p_0-1)}(\Omega)}^\frac{2(q-1)p_0}{q(p_0-1)}\nonumber\\
&\le C_{GN}\|\nabla |\nabla u|^q\|_{L^2(\Omega)}^{\frac{2(q-1)p_0}{q(p_0-1)}a_2}\||\nabla u|^q\|_{L^\frac{2}{q}(\Omega)}^{\frac{2(q-1)p_0}{q(p_0-1)}(1-a_2)}+C_{GN}\||\nabla u|^q\|_{L^\frac{2}{q}(\Omega)}^\frac{2(q-1)p_0}{q(p_0-1)}\nonumber\\
&\le \epsilon_4\int_\Omega |\nabla|\nabla u|^q|^2dx+C(\epsilon_4)
\end{align}
with $\epsilon_4>0$ and $C(\epsilon_4)=\epsilon_4^{-\frac{{(q-1)p_0}a_2}{q(p_0-1)-{(q-1)p_0}a_2}}
C_{GN}^\frac{q(p_0-1)}{q(p_0-1)-{(q-1)p_0}a_2}C_{41}^\frac{(q-1)p_0(1-a_2)}{q(p_0-1)-{(q-1)p_0}a_2}
+C_{GN}C_{41}^\frac{(q-1)p_0}{p_0-1}$. Selecting $\epsilon_3=\frac{q-1}{(C_{42}+1)q}$ and $\epsilon_4=\frac{q-1}{C_{43}q}$, then for $q\in(1,\min\{\frac{n+2-4\alpha}{2\alpha n},\frac{n+4}{2}\})$ we obtain from \eqref{467}, \eqref{468} and \eqref{469} that
\begin{align}\label{4610}
\frac{d}{dt}\int_\Omega |\nabla u|^{2q}dx&\le-\int_\Omega |\nabla u|^{2q}dx+C_{45},~~t>0,
\end{align}
with $C_{45}=(C_{42}+1)C(\frac{q-1}{(C_{42}+1)q})+C_{43}C(\frac{q-1}{C_{43}q})+C_{44}$, which yields
\begin{align*}
\int_\Omega |\nabla u|^{2q}dx\le C_{46},~~t>0
\end{align*}
with $C_{46}=\max\{C_{45},\int_\Omega |\nabla u_0|^{2q}dx\}$. Consequently, if $\alpha\in(0,\frac{n+2}{n^2+4})$ with $\mu>\tilde\mu_0$, then for $q_0\in(n, \min\{\frac{n+2-4\alpha}{\alpha n},{n+4}\})$ the estimate \eqref{lem461} is valid with some $L_9=L_9(\chi,\alpha,r,\mu,\Omega)>0$. \qquad$\Box$\medskip

\section{Long-time behavior for $\alpha\in(0,\frac{1}{2})$}

Let $a:=\frac{r}{\mu}$. We at first estimate $\frac{d}{dt}\int_\Omega (u-a-a\ln \frac{u}{a})dx$.
\begin{lemma}\label{lem51}
For $\alpha\in(0,1)$ and $\mu>\mu_0$, it holds that
\begin{align}\label{lem511}
\frac{d}{dt}\int_\Omega (u-a-a\ln \frac{u}{a})dx\le-\frac{a}{2}\int_\Omega \frac{|\nabla u|^2}{u^2}dx+\frac{\chi^2a}{2(1-\alpha)^2}\int_\Omega{|\nabla z|^2}dx-\mu\int_\Omega (u-a)^2dx
\end{align}
for $t>0$.
\end{lemma}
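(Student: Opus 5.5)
Since $(u,z)$ is a classical solution of \eqref{pp} with $u>0$, the plan is a direct computation of the time derivative of the Lyapunov-type functional, followed by one Young inequality. Differentiating in time and substituting the first equation of \eqref{pp}, we get
\begin{align*}
\frac{d}{dt}\int_\Omega \Big(u-a-a\ln\frac{u}{a}\Big)dx=\int_\Omega\Big(1-\frac{a}{u}\Big)\Big(\Delta u-\frac{\chi}{1-\alpha}\nabla\cdot(u\nabla z)+ru-\mu u^2\Big)dx .
\end{align*}
The constant $1$ does not contribute to the diffusion and cross-diffusion terms, because both are in divergence form with homogeneous Neumann data ($\partial_\nu u=\partial_\nu z=0$).

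Next we integrate by parts in the remaining two transport terms. For the diffusion term,
\begin{align*}
-a\int_\Omega \frac{\Delta u}{u}dx=-a\int_\Omega\frac{|\nabla u|^2}{u^2}dx .
\end{align*}
For the cross-diffusion term,
\begin{align*}
\frac{a\chi}{1-\alpha}\int_\Omega \frac{\nabla\cdot(u\nabla z)}{u}dx=\frac{a\chi}{1-\alpha}\int_\Omega \frac{\nabla u\cdot\nabla z}{u}dx .
\end{align*}
By Young's inequality, the latter is at most
\begin{align*}
\frac{a}{2}\int_\Omega\frac{|\nabla u|^2}{u^2}dx+\frac{\chi^2 a}{2(1-\alpha)^2}\int_\Omega |\nabla z|^2dx .
\end{align*}
Together these produce exactly the first two terms on the right of \eqref{lem511}.

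For the logistic part we use $r=\mu a$ and factor:
\begin{align*}
\Big(1-\frac{a}{u}\Big)(ru-\mu u^2)=\frac{u-a}{u}\,\mu u(a-u)=-\mu(u-a)^2 .
\end{align*}
This gives the last term $-\mu\int_\Omega(u-a)^2dx$.

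There is no real obstacle here; the argument is pure bookkeeping. The only points needing care are the following.
\begin{itemize}
\item Justifying the integrations by parts requires positivity and smoothness of $u$, which Lemma \ref{lem11} and the strong maximum principle provide, together with $z=v^{1-\alpha}$ being smooth and positive.
\item One must check that the Young splitting constant is $\tfrac12$, so that the coefficients match \eqref{lem511}.
\end{itemize}
Note that no restriction on $\alpha\in(0,1)$ beyond the well-definedness of $z$ is needed, and the assumption $\mu>\mu_0$ enters only through the global existence of the classical solution.
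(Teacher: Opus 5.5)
Your proposal is correct and matches the paper's proof: substitute the $u$-equation of \eqref{pp}, integrate by parts to obtain $-a\int_\Omega\frac{|\nabla u|^2}{u^2}dx+\frac{\chi a}{1-\alpha}\int_\Omega\frac{\nabla u}{u}\cdot\nabla z\,dx$, and write the logistic part as $-\mu(u-a)^2$ using $r=\mu a$. Then Young's inequality with weight $\frac12$ gives \eqref{lem511}. Your added remarks on positivity of $u$ and $z$ are appropriate, and so is the observation that $\mu>\mu_0$ is used only to guarantee the global classical solution.
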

{\bf Proof.}\
A direct calculation with the Young inequality shows that
\begin{align*}
\frac{d}{dt}\int_\Omega (u-a-a\ln \frac{u}{a})dx&=\int_\Omega \frac{u-a}{u}(\Delta u-\frac{\chi}{1-\alpha}\nabla\cdot(u\nabla z)+ru-\mu u^2)dx\nonumber\\
&=-a\int_\Omega \frac{|\nabla u|^2}{u^2}dx+\frac{\chi a}{1-\alpha}\int_\Omega \frac{\nabla u}{u}\cdot \nabla zdx-\mu\int_\Omega(u-a)^2dx\nonumber\\
&\le -\frac{a}{2}\int_\Omega \frac{|\nabla u|^2}{u^2}dx+\frac{\chi^2a}{2(1-\alpha)^2}\int_\Omega |\nabla z|^2dx-\mu\int_\Omega(u-a)^2dx,~~t>0.
\end{align*}
This completes the proof. \qquad$\Box$\medskip

When $\alpha\in(0,\frac{1}{2})$, we can control $\int_\Omega |\nabla z|^2dx$ by $\int_\Omega \frac{|\nabla u|^2}{u^2}dx$ after some time based on the qualitative estimate of $\|u\|_{L^\infty(\Omega)}$ with respect to $\mu$.

\begin{lemma}\label{lem53}
If $\alpha\in(0,\frac{1}{2})$, there exists some $\mu_\star\ge \mu_0$ such that
\begin{align}\label{lem531}
\int_\Omega |\nabla z(x,t)|^2dx\le \frac{(1-\alpha)^2}{\chi^2}\int_\Omega \frac{|\nabla u(x,t)|^2}{u(x,t)^2}dx,
\end{align}
provided $\mu>\mu_\star$ for $t>t_0$ with $t_0$ determined in Theorem \ref{th1}.
\end{lemma}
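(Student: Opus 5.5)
The plan is to reduce \eqref{lem531} to a weighted estimate for $v$ and to close it with the $\mu$-dependent $L^\infty$ bound of Theorem \ref{th1}. Since $z=v^{1-\alpha}$, we have $\nabla z=(1-\alpha)v^{-\alpha}\nabla v$, so \eqref{lem531} is equivalent to
\begin{align*}
\int_\Omega \frac{|\nabla v|^2}{v^{2\alpha}}dx\le \frac{1}{\chi^2}\int_\Omega \frac{|\nabla u|^2}{u^2}dx,~~t>t_0.
\end{align*}
I would obtain this by testing the second equation of \eqref{p} against powers of $v$ and $u$, and then comparing the two resulting identities through a monotonicity argument.

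First, test $0=\Delta v-v+u$ with $v^{1-2\alpha}$. Since $1-2\alpha>0$, this is legitimate for the positive classical solution. Integration by parts gives
\begin{align*}
(1-2\alpha)\int_\Omega \frac{|\nabla v|^2}{v^{2\alpha}}dx=\int_\Omega (u-v)v^{1-2\alpha}dx.
\end{align*}
The key observation is that $s\mapsto s^{1-2\alpha}$ is increasing. Hence $(u-v)(v^{1-2\alpha}-u^{1-2\alpha})\le 0$ pointwise, and so
\begin{align*}
\int_\Omega (u-v)v^{1-2\alpha}dx\le\int_\Omega (u-v)u^{1-2\alpha}dx.
\end{align*}
Next, test the $v$-equation with $u^{1-2\alpha}$, which is smooth since $u>0$. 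Using $u-v=-\Delta v$ and the Neumann condition, we get
\begin{align*}
\int_\Omega (u-v)u^{1-2\alpha}dx=(1-2\alpha)\int_\Omega u^{-2\alpha}\nabla u\cdot\nabla v\,dx.
\end{align*}
Dividing by $1-2\alpha>0$ then yields
\begin{align*}
\int_\Omega \frac{|\nabla v|^2}{v^{2\alpha}}dx\le \int_\Omega u^{-2\alpha}\nabla u\cdot\nabla v\,dx.
\end{align*}

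Now apply Young's inequality with the weights chosen to match the left-hand side:
\begin{align*}
|u^{-2\alpha}\nabla u\cdot\nabla v|\le \frac12 v^{-2\alpha}|\nabla v|^2+\frac12 v^{2\alpha}u^{-4\alpha}|\nabla u|^2 .
\end{align*}
Absorbing the first term gives
\begin{align*}
\int_\Omega \frac{|\nabla v|^2}{v^{2\alpha}}dx\le\int_\Omega v^{2\alpha}u^{2-4\alpha}\frac{|\nabla u|^2}{u^2}dx.
\end{align*}
Here $2-4\alpha>0$ precisely because $\alpha<\frac12$. By the elliptic comparison principle, $\|v\|_{L^\infty(\Omega)}\le\|u\|_{L^\infty(\Omega)}$. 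For $t>t_0$, \eqref{th11} then gives $v^{2\alpha}u^{2-4\alpha}\le (L_0/\mu)^{2-2\alpha}$, with $L_0$ independent of $\mu$. Therefore
\begin{align*}
\int_\Omega |\nabla z|^2dx\le (1-\alpha)^2\Big(\frac{L_0}{\mu}\Big)^{2-2\alpha}\int_\Omega\frac{|\nabla u|^2}{u^2}dx,~~t>t_0.
\end{align*}
Choosing $\mu_\star:=\max\{\mu_0,L_0\chi^{\frac{1}{1-\alpha}}\}$ ensures $(L_0/\mu)^{2-2\alpha}\le\chi^{-2}$ whenever $\mu>\mu_\star$, which is exactly \eqref{lem531}.

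The main obstacle is the possible degeneracy of $v$ from below. A direct Hölder splitting of $v^{-2\alpha}|\nabla v|^2$ via \eqref{lem131} produces a bound that is only sublinear in $\int_\Omega|\nabla u|^2/u^2dx$, which is useless for small gradients. The monotonicity comparison between $v^{1-2\alpha}$ and $u^{1-2\alpha}$ is what avoids any lower bound for $v$. The points I would double-check are the sign $1-2\alpha>0$, used when dividing and in the comparison, and the fact that only positive powers of $u$ and $v$ remain after the Young step.
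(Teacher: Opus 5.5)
Your argument is correct, and it takes a genuinely different route from the paper.

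The paper works with the $z$-equation. It integrates the Bochner-type identity \eqref{4110} over $\Omega$ and drops $\int_{\partial\Omega}\partial_\nu|\nabla z|^2$ by convexity. It then absorbs the cross term $\frac{2\alpha}{1-\alpha}\int_\Omega\nabla z\cdot\frac{\nabla|\nabla z|^2}{z}dx$ into the $|D^2z|^2$ and $\frac{|\nabla z|^4}{z^2}$ terms. Next it handles $2(1-\alpha)\int_\Omega\nabla z\cdot\nabla(uz^{-\frac{\alpha}{1-\alpha}})dx$ by Young's inequality and the bound $\|u\|_{L^\infty(\Omega)}\le L_0/\mu$. A second Young inequality against the favorable term $-2\alpha\int_\Omega u\frac{|\nabla z|^2}{z^{1/(1-\alpha)}}dx$ produces $C_{47}\mu^{-\frac{2(1-\alpha)}{1-2\alpha}}\int_\Omega|\nabla z|^2dx$, which is absorbed into $2(1-\alpha)\int_\Omega|\nabla z|^2dx$ once $\mu$ is large.

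You instead stay with $0=\Delta v-v+u$ and test it with the first-order quantities $v^{1-2\alpha}$ and $u^{1-2\alpha}$. You connect the two resulting identities through the pointwise monotonicity $(u-v)(v^{1-2\alpha}-u^{1-2\alpha})\le0$, and a single Young inequality plus the $L^\infty$ bounds finishes.

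What each route buys:
\begin{itemize}
\item Your route uses no second derivatives, no convexity of $\Omega$ and no absorption in $\mu$.
\item It gives the explicit threshold $\mu_\star=\max\{\mu_0,L_0\chi^{\frac{1}{1-\alpha}}\}$, and even a constant $(1-\alpha)^2(L_0/\mu)^{2-2\alpha}$ that decays in $\mu$.
\item It does need the extra (standard) comparison $\|v\|_{L^\infty(\Omega)}\le\|u\|_{L^\infty(\Omega)}$. The paper sidesteps this by exploiting the negative term.
\item The paper's route has the merit of reusing the identity already set up for Lemma \ref{lem41}.
\end{itemize}

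In both proofs $\alpha<\frac12$ is used essentially. For you it enters through the monotonicity of $s\mapsto s^{1-2\alpha}$ and the sign of $2-4\alpha$. For the paper it enters through the Young exponent $\frac{1}{1-2\alpha}$ and the absorption $\frac{\alpha^2}{(1-\alpha)^2}\le\frac{\alpha}{1-\alpha}$.
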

{\bf Proof.}\
It is known by \eqref{4110} and the convexity of $\Omega$ that
\begin{align}\label{531}
2(1-\alpha)\int_\Omega |\nabla z|^2dx&\le -2\int_\Omega |D^2z|^2dx-\frac{2\alpha}{1-\alpha}\int_\Omega\frac{|\nabla z|^4}{z^2}dx\nonumber\\
&~~+ \frac{2\alpha}{1-\alpha}\int_\Omega\nabla z\cdot\frac{\nabla|\nabla z|^2}{z}dx+2(1-\alpha)\int_\Omega \nabla z\cdot\nabla (uz^{-\frac{\alpha}{1-\alpha}})dx
\end{align}
Since $\nabla|\nabla z|^2=2 D^2z\cdot\nabla z$, we get by the Young inequality that
\begin{align}\label{532}
\frac{2\alpha}{1-\alpha}\int_\Omega\nabla z\cdot\frac{\nabla|\nabla z|^2}{z}dx\le 2\int_\Omega |D^2 z|^2dx+\frac{2\alpha^2}{(1-\alpha)^2}\int_\Omega \frac{|\nabla z|^4}{z^2}dx.
\end{align}
In addition, for $\alpha\in(0,\frac{1}{2})$ and $\mu>\mu_0$, we get by the Young inequality with \eqref{244} that
\begin{align}\label{533}
2(1-\alpha)\int_\Omega \nabla z\cdot\nabla (uz^{-\frac{\alpha}{1-\alpha}})dx&=2(1-\alpha)\int_\Omega \frac{\nabla u\cdot\nabla z}{z^\frac{\alpha}{1-\alpha}}dx-2\alpha\int_\Omega u\frac{|\nabla z|^2}{z^\frac{1}{1-\alpha}}dx\nonumber\\
&\le \frac{(1-\alpha)^2}{2\chi^2}\int_\Omega \frac{|\nabla u|^2}{u^2}dx+{2\chi^2}\int_\Omega u^2\frac{|\nabla z|^2}{z^\frac{2\alpha}{1-\alpha}}dx-2\alpha \int_\Omega u\frac{|\nabla z|^2}{z^\frac{1}{1-\alpha}}dx\nonumber\\
&\le \frac{(1-\alpha)^2}{2\chi^2}\int_\Omega \frac{|\nabla u|^2}{u^2}dx+2\chi^2\|u\|_{L^\infty(\Omega)}^{2(1-\alpha)}\int_\Omega u^{2\alpha}\frac{|\nabla z|^{2}}{z^\frac{2\alpha}{1-\alpha}}dx-2\alpha \int_\Omega u\frac{|\nabla z|^2}{z^\frac{1}{1-\alpha}}dx\nonumber\\
&\le\frac{(1-\alpha)^2}{2\chi^2}\int_\Omega \frac{|\nabla u|^2}{u^2}dx+\frac{2\chi^2 L_0^{2(1-\alpha)}}{\mu^{2(1-\alpha)}}\int_\Omega u^{2\alpha}\frac{|\nabla z|^{2}}{z^\frac{2\alpha}{1-\alpha}}dx-2\alpha \int_\Omega u\frac{|\nabla z|^2}{z^\frac{1}{1-\alpha}}dx\nonumber\\
&\le \frac{(1-\alpha)^2}{2\chi^2}\int_\Omega \frac{|\nabla u|^2}{u^2}dx+\frac{C_{47}}{\mu^\frac{2(1-\alpha)}{1-2\alpha}}\int_\Omega |\nabla z|^2dx
\end{align}
with $C_{47}=(\frac{1}{2\alpha})^\frac{2\alpha}{1-2\alpha}
(2\chi^2)^\frac{1}{1-2\alpha}L_0^\frac{2(1-\alpha)}{1-2\alpha}$ for $t>t_0$.
Let $\mu_\star=\max\{\mu_0, (2C_{47})^\frac{1-2\alpha}{2(1-\alpha)}\}$. Then for $\mu>\mu_\star$, it is known that
\begin{align*}
2(1-\alpha)-\frac{C_{47}}{\mu^\frac{2(1-\alpha)}{1-2\alpha}}\ge \frac{1}{2},
\end{align*}
and then by the combination of \eqref{531}--\eqref{533} that
\begin{align*}
\int_\Omega |\nabla z|^2dx\le \frac{(1-\alpha)^2}{\chi^2}\int_\Omega \frac{|\nabla u|^2}{u^2}dx
\end{align*}
for $t>t_0$. This completes the proof of \eqref{lem531}. \qquad$\Box$\medskip

Now, we establish that $(u,v)$ converges to $(\frac{r}{\mu},\frac{r}{\mu})$ as $t\rightarrow\infty$ if $\alpha\in(0,\frac{1}{2})$ and $\mu>0$ sufficiently large.
\begin{lemma}\label{lem54}
For $\alpha\in(0,\frac{1}{2})$ and $\mu>\mu_\star$, then
\begin{align}\label{lem541}
\|u-\frac{r}{\mu}\|_{L^\infty(\Omega)}+\|v-\frac{r}{\mu}\|_{L^\infty(\Omega)}\rightarrow 0,~~as~t\rightarrow\infty.
\end{align}
\end{lemma}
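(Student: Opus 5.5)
Combining Lemma \ref{lem51} with Lemma \ref{lem53}, for $\mu>\mu_\star$ and $t>t_0$ we have
\begin{align*}
\frac{d}{dt}\int_\Omega \Big(u-a-a\ln \frac{u}{a}\Big)dx\le -\mu\int_\Omega (u-a)^2dx,
\end{align*}
since the term $\frac{\chi^2a}{2(1-\alpha)^2}\int_\Omega|\nabla z|^2dx$ is bounded by $\frac{a}{2}\int_\Omega\frac{|\nabla u|^2}{u^2}dx$. The functional $E(t):=\int_\Omega (u-a-a\ln\frac{u}{a})dx$ is nonnegative because $s-a-a\ln\frac{s}{a}\ge 0$ for $s>0$. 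Integrating over $(t_0,\infty)$ therefore yields
\begin{align*}
\int_{t_0}^\infty\int_\Omega (u-a)^2dxdt\le \frac{E(t_0)}{\mu}<\infty,
\end{align*}
where $E(t_0)$ is finite because $u(\cdot,t_0)$ is bounded and positive by the strong maximum principle.

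Next I would upgrade this space-time integrability to pointwise-in-time decay. By Lemma \ref{lem44}, $u$ is uniformly H\"older continuous on $\bar\Omega\times[t,t+1]$ for $t>1$. Hence $t\mapsto \int_\Omega(u-a)^2dx$ is uniformly continuous on $(1,\infty)$, and being integrable it must tend to $0$. To pass to the $L^\infty$-norm I would argue by contradiction. If $\|u(\cdot,t_k)-a\|_{L^\infty(\Omega)}\ge\varepsilon$ along some $t_k\to\infty$, the H\"older bound gives a ball (relative to $\bar\Omega$) of radius $\rho$, independent of $k$, on which $|u(\cdot,t_k)-a|\ge\varepsilon/2$. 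The regularity of $\partial\Omega$ makes the measure of such sets bounded below by $c\rho^n$, so $\int_\Omega (u(\cdot,t_k)-a)^2dx\ge c'>0$, which is a contradiction. An Arzel\`a--Ascoli argument would work equally well. This gives $\|u-a\|_{L^\infty(\Omega)}\to0$.

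For $v$, the function $w:=v-a$ satisfies $-\Delta w+w=u-a$ with homogeneous Neumann boundary conditions. Elliptic $W^{2,p}$ estimates with $p>\frac n2$, or simply the comparison principle, give $\|v-a\|_{L^\infty(\Omega)}\le\|u-a\|_{L^\infty(\Omega)}$. The comparison principle applies because $a\pm\|u-a\|_{L^\infty}$ are constant super- and subsolutions. This proves \eqref{lem541}.

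The main obstacle is making the passage from $\int_{t_0}^\infty\int_\Omega(u-a)^2<\infty$ to uniform convergence rigorous. This rests on the uniform-in-time H\"older bound of Lemma \ref{lem44}, and the lower bound for the measure of the neighbourhood near $\partial\Omega$ has to be handled carefully. A minor point is confirming that the $\mu$-dependence in Lemma \ref{lem53} is consistent, i.e.\ that $\mu_\star\ge\mu_0$, so that Lemma \ref{lem51} and Theorem \ref{th1} both apply.
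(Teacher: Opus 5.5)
Your proposal is correct and follows the paper's route. You combine Lemmas \ref{lem51} and \ref{lem53} into the dissipation inequality for $\int_\Omega (u-a-a\ln\frac{u}{a})dx$ and integrate to get $\int_{t_0}^\infty\int_\Omega (u-a)^2dxds<\infty$. You then use the uniform H\"older bound of Lemma \ref{lem44} to upgrade this to $\|u-a\|_{L^\infty(\Omega)}\to0$, and conclude for $v$ by the comparison (maximum) principle. You also supply details the paper leaves implicit: the uniform continuity of $t\mapsto\int_\Omega(u-a)^2dx$, the measure lower bound near $\partial\Omega$, and the check that $\mu_\star\ge\mu_0>\tilde\mu_0$, so that Lemma \ref{lem44} and Theorem \ref{th1} both apply.
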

{\bf Proof.}\
For $\alpha\in(0,\frac{1}{2})$ and $\mu>\mu_\star$, we obtain from \eqref{lem511} and \eqref{lem531} that
\begin{align}\label{541}
\frac{d}{dt}\int_\Omega (u-a-a\ln \frac{u}{a})dx\le-\mu\int_\Omega (u-a)^2dx,~~t>t_0,
\end{align}
which integrating from $t_0$ to $t$ yields
\begin{align*}
\int_{t_0}^t\int_\Omega(u-a)^2dxds\le \int_\Omega \big(u(\cdot,t_0)-a-a\ln\frac{u(\cdot,t_0)}{a}\big)dx\le C_{48},~~t>t_0
\end{align*}
with some $C_{48}=C_{48}(r,\mu)>0$, and then by the Beppo Levi theorem that
\begin{align*}
\int_{t_0}^\infty\int_\Omega (u-a)^2dxds\le C_{48}
\end{align*}
This together with the Schauder estimate of $u$ in \eqref{lem441} entails that
\begin{align}\label{542}
\|u-a\|_{L^\infty(\Omega)}\rightarrow 0~~as~t\rightarrow\infty.
\end{align}
and moveover that $v\rightarrow \frac{r}{\mu}$ in $L^\infty(\Omega)$ as $t\rightarrow\infty$ by the Maximum principle. The proof of \eqref{lem541} is complete. \qquad$\Box$\medskip

If $\alpha\in(0,\frac{n+2}{n^2+4})$ and $\mu>0$ sufficiently large, it can be proved that the solution $(u,v)$ furthermore enjoys the exponential convergence.
\begin{lemma}\label{lem55}
If $\alpha\in(0,\frac{n+2}{n^2+4})$ and $\mu>\mu_\star$, there exists some $\eta_\star>0$ and $L_\star=L_\star(\chi,\alpha,r,\mu,\Omega)>0$ such that
\begin{align}\label{lem551}
\|u-a\|_{L^\infty(\Omega)}\le L_\star e^{-\eta_\star t},~~t>0.
\end{align}
\end{lemma}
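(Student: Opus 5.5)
The plan is to turn the dissipation inequality \eqref{541} into an exponential decay of the entropy-like functional $E(t):=\int_\Omega (u-a-a\ln\frac{u}{a})dx$. After that, $L^2$ decay is lifted to $L^\infty$ decay by interpolating against the gradient bound of Lemma \ref{lem46}.

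\textbf{Step 1: comparability of $E$ and $\int_\Omega (u-a)^2dx$.} By Lemma \ref{lem54} there is $t_4>t_0$ with $\|u-a\|_{L^\infty(\Omega)}\le \frac a2$ for $t>t_4$. Put $\phi(s)=s-a-a\ln\frac sa$, so that $\phi(a)=\phi'(a)=0$ and $\phi''(s)=\frac{a}{s^2}$. A Taylor expansion on $[\frac a2,\frac{3a}2]$ gives
\begin{align*}
\frac{2}{9a}(u-a)^2\le \phi(u)\le \frac{2}{a}(u-a)^2,\qquad t>t_4.
\end{align*}
Inserting the upper bound into \eqref{541} yields $E'(t)\le -\mu\int_\Omega(u-a)^2dx\le -\frac{\mu a}{2}E(t)=-\frac r2E(t)$. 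Hence $E(t)\le E(t_4)e^{-\frac r2(t-t_4)}$, and the lower comparison then gives
\[
\int_\Omega(u-a)^2dx\le C e^{-\frac r2 t},\qquad t>t_4 .
\]
This rate depends only on $r$, which matches $\eta_\star=\eta_\star(r)$.

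\textbf{Step 2: passing to $L^\infty$.} Take $q_0>n$ from Lemma \ref{lem46}, so that $\|\nabla(u-a)\|_{L^{q_0}(\Omega)}\le L_9^{1/q_0}$ uniformly in time. Apply the Gagliardo--Nirenberg inequality
\[
\|w\|_{L^\infty(\Omega)}\le C_{GN}\|w\|_{W^{1,q_0}(\Omega)}^{\lambda}\|w\|_{L^2(\Omega)}^{1-\lambda}+C_{GN}\|w\|_{L^2(\Omega)},
\]
with $\lambda=\frac{n/2}{1-n/q_0+n/2}\in(0,1)$, which is where $q_0>n$ is needed. Use it with $w=u-a$; the $W^{1,q_0}$ norm is bounded by Lemma \ref{lem46} together with Lemma \ref{lem11}. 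This gives $\|u-a\|_{L^\infty(\Omega)}\le C e^{-\frac{r(1-\lambda)}{4}t}$ for $t>t_4$. I would take $\eta_\star=\frac{r(1-\lambda)}{4}$; strictly this depends on $n$ and $q_0$ as well, and those can be fixed once $\alpha$ is fixed. On $[0,t_4]$, the bound $\|u-a\|_{L^\infty(\Omega)}\le L_1+a$ from Lemma \ref{lem11} lets us enlarge $L_\star$ so that \eqref{lem551} holds for all $t>0$. For completeness, $v-a$ solves $-\Delta(v-a)+(v-a)=u-a$, so the maximum principle gives $\|v-a\|_{L^\infty(\Omega)}\le\|u-a\|_{L^\infty(\Omega)}$, which transfers the decay to $v$.

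\textbf{Main obstacle.} The delicate point is Step 2. Lemma \ref{lem44} only supplies H\"older regularity, and converting a decaying $L^2$ norm into $L^\infty$ decay through H\"older bounds is possible but clumsy. The cleaner route is the uniform $W^{1,q_0}$ bound with $q_0>n$, and this is exactly why the restriction $\alpha<\frac{n+2}{n^2+4}$ of Lemma \ref{lem46} enters. A second point to check is that \eqref{541} needs $t>t_0$ and the Taylor comparison needs $u$ near $a$; both are handled by waiting until $t_4$, which Lemma \ref{lem54} provides.
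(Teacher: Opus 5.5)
Your proposal is correct and follows essentially the same route as the paper's proof: comparability of $\int_\Omega(u-a-a\ln\frac{u}{a})dx$ with $\int_\Omega(u-a)^2dx$ once $u$ is uniformly close to $a$ (via Lemma \ref{lem54}), exponential decay from \eqref{541}, Gagliardo--Nirenberg interpolation against the $W^{1,q_0}$ bound of Lemma \ref{lem46}, and the maximum principle for $v$. Your bookkeeping is slightly more careful than the paper's, though this affects only constants and the size of $\eta_\star$: you take the square root when passing to $\|u-a\|_{L^2(\Omega)}$, you use $L_1+a$ rather than $L_0+a$ on the initial time interval, and you note that $\eta_\star$ also depends on $n$ and $q_0$.
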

{\bf Proof.}\ A direct calculation tells that
\begin{align*}
\lim_{u\rightarrow a}\frac{u-a-a\ln\frac{u}{a}}{(u-a)^2}=\frac{1}{2a}.
\end{align*}
This together with \eqref{lem541} entails that there exists $t_5\ge t_0$ such that
\begin{align}\label{551}
\frac{1}{4a}(u-a)^2\le u-a-a\ln\frac{u}{a}\le \frac{1}{a}(u-a)^2,~~t>t_5
\end{align}
and then by \eqref{541} that
\begin{align*}
\frac{d}{dt}\int_\Omega (u-a-a\ln \frac{u}{a})dx\le-r\int_\Omega (u-a-a\ln \frac{u}{a})dx,~~t>t_5.
\end{align*}
Hence, we get
\begin{align*}
\int_\Omega (u-a-a\ln\frac{u}{a})dx\le C_{49}e^{-r(t-t_5)},~~t>t_5,
\end{align*}
with $C_{49}=\int_\Omega (u(\cdot,t_5)-a-a\ln\frac{u(\cdot,t_5)}{a})dx$, which along with \eqref{551} yields that
\begin{align}\label{552}
\|u-a\|_{L^2(\Omega)}\le C_{50}e^{-r(t-t_5)},~~t>t_5
\end{align}
with  $C_{50}=4aC_{44}$. Applying the Gaglirado-Nirenberg inequality and the Poincar\'{e} inequality, we get by \eqref{552} and \eqref{lem461} that
\begin{align*}
\|u-a\|_{L^\infty(\Omega)}&\le C_{GN}\|\nabla u\|_{L^{q_0}(\Omega)}^\frac{nq_0}{nq_0+2(q_0-n)}\|u-a\|_{L^2(\Omega)}^\frac{2(q_0-n)}{nq_0+2(q_0-n)}
+C_{GN}\|u-a\|_{L^2(\Omega)}\nonumber\\
&\le  C_{GN}C_{50}L_9^\frac{nq_0}{nq_0+2(q_0-n)}e^{-\frac{2(q_0-n)}{nq_0+2(q_0-n)}r(t-t_5)}+C_{GN}C_{50}e^{-r(t-t_5)}  \nonumber\\
&\le C_{51}e^{{-\eta_\star}(t-t_5)},~~t>t_5.
\end{align*}
with $\eta_\star=\frac{2(q_0-n)r}{nq_0+2(q_0-n)}$ and $C_{51}=C_{GN}C_{50}(1+L_9^\frac{nq_0}{nq_0+2(q_0-n)})$, and then
\begin{align}\label{553}
\|u-a\|_{L^\infty(\Omega)}\le C_{52}e^{{-\eta_\star}t},~~t>0.
\end{align}
with $C_{52}=e^{\eta_\star t_5}\max\{L_0+a,C_{52}\}$.
In addition,  this together with the Maximum principle entails that there exists some $C_{53}>0$ such that
 \begin{align}\label{553}
\|v-a\|_{L^\infty(\Omega)}\le C_{53}e^{{-\eta_\star}t},~~t>0.
\end{align}
 Consequently, the proof of \eqref{lem551} is complete with $L_\star=\max\{C_{52},C_{53}\}$.
\qquad$\Box$\medskip

{\bf Proof of Theorem \ref{th2}}\
The proof is contained in Lemmas \ref{lem54} and \ref{lem55}. \qquad$\Box$\medskip

{\noindent\bf Conflict of interest}

There is no conflict of interest.

{\noindent\bf Data availability}

Data will be made available by reasonable request.

{\noindent \bf Acknowledgements}

This work is supposed by the National Natural Science Foundation of China (No. 12201276).



{\small \begin{thebibliography}{}

\bibitem{KS1970} E.F. Keller, L.A. Segel, Initiation of slime mold aggregation viewed as an instability, J. Theoret. Biol. 26 (1970) 399--415. \\[-17pt]
\bibitem{KS1971} E.F. Keller, L.A. Segel, Traveling bands of chemotactic bacteria: a theoretical analysis, J. Theoret. Biol. 30 (1971) 235--248. \\[-17pt]
\bibitem{NSY1997} T. Nagai, T. Senba, K. Yoshida, Application of the Trudinger-Moser inequality to a parabolic system of chemotaxis,
Funkcial. Ekvac. 40 (1997) 411--433.\\[-17pt]
\bibitem{C2015} X.R. Cao, Global bounded solutions of the higher-dimensional Keller-Segel system under smallness conditions in optimal spaces, Discrete Contin. Dynam. Syst. Ser. A 35 (2015) 1891--1904.\\[-17pt]
\bibitem{TW2007} J.I. Tello, M. Winkler, A chemotaxis system with logistic
source,  Commun. Partial Differ. Equ., 32 (2007) 849--877.\\[-17pt]
\bibitem{MW2010} M. Winkler,  Boundedness in the higher-dimensional parabolic-parabolic chemotaxis system with logistic source, Commun. Partial Differ. Equ. 35 (2010) 1516--1537. \\[-17pt]

\bibitem{C2017} X.R. Cao, Large time behavior in the logistic Keller-Segel model via maximal Sobolev regularity, Discrete Contin.
Dyn. Syst., Ser. B 22 (9) (2017) 3369--3378.\\[-17pt]
\bibitem{N1995} T. Nagai, Blow-up of radially symmetric solutions to a chemotaxis system, Adv. Math. Sci. Appl. 5 (1995) 581--601.\\[-17pt]
\bibitem{N2001} T. Nagai, Blow-up of nonradial solutions to parabolic-elliptic systems modeling chemotaxis in two dimensional domains, J. Inequal. Appl. 6, 37--51 (2001).\\[-17pt]
\bibitem{W2013} M. Winkler, Finite-time blow-up in the higher-dimensional parabolic-parabolic Keller-Segel system, J. Math. Pures Appl. 100 (2013) 748--767.\\[-17pt]
\bibitem{W2018} M. Winkler, Finite-time blow-up in low-dimensional Keller-Segel systems with
logistic-type superlinear degradationm, Z. Angew. Math. Phys. (2018) 69:40. \\[-17pt]
\bibitem{FWY2015} K. Fujie, M. Winkler, T. Yokota, Boundedness of solutions to parabolic-elliptic Keller-Segel systems with signal-dependent sensitivity, Math. Methods. Appl. Sci. 38 (2015) 1212--1224.\\[-17pt]
\bibitem{FS2016} K. Fujie and T. Senba, Global existence and boundedness in a parabolic-elliptic Keller-Segel
system with general sensitivity, Discrete Contin. Dyn. Syst. Ser. B 21 (2016) 81--102. \\[-17pt]
\bibitem{HK2025} H.I. Kurt, Improvement of criteria for global boundedness in a minimal
parabolic-elliptic chemotaxis system with singular sensitivity, Applied Mathematics Letters 167 (2025) 109570. \\[-17pt]
\bibitem{A2019} J. Ahn, Global well-posedness and asymptotic stabilization for
chemotaxis system with signal-dependent sensitivity, J. Differ. Equ. 266 (2019) 6866--6904. \\[-17pt]
\bibitem{L2026} M. Le, An improvement toward global boundedness in a fully parabolic chemotaxis with singular sensitivity in any dimension, Nonlinear Anal. 268 (2026) 114082.  \\[-17pt]
\bibitem{AKL2019} J. Ahna, K. Kang, J. Lee, Eventual smoothness and stabilization of global weak solutions in
parabolic-elliptic chemotaxis systems with logarithmic sensitivity, Nonlinear Anal. Real World Appl. 49 (2019) 312--330. \\[-17pt]
\bibitem{WY2018} M. Winkler, T. Yokota, Stabilization in the logarithmic Keller-Segel system, Nonlinear Analysis 170 (2018) 123--141. \\[-17pt]
\bibitem{LX2026} B. Li, L. Xie, The role of the general singular sensitivity in solvability
and eventual smoothness to a 2-D chemotaxis system, J. Differ. Equ.  453 (2026) 113901. \\[-17pt]

\bibitem{NS1997} T. Nagai, T. Senba, Behavior of radially symmetric solutions of a system related to chemotaxis,
Nonlinear Anal. 30 (1997) 3837--3842. \\[-17pt]
\bibitem{FS2018} K. Fujie, T. Senba, A sufficient condition of sensitivity functions for
boundedness of solutions to a parabolic-parabolic
chemotaxis system, Nonlinearity 31 (2018) 1639--1672.\\[-17pt]
\bibitem{W2022} M. Winkler, Unlimited growth in logarithmic Keller-Segel systems, J. Differ. Equ. 309 (2022) 74--97. \\[-17pt]
\bibitem{FWY2014} K. Fujie, M. Winkler, T. Yokota, Blow-up prevention by logistic sources in a parabolic-elliptic Keller-Segel system with singular sensitivity, Nonliear Anal. 109 (2014) 56--71. \\[-17pt]
\bibitem{HW2021} H.I. Kurt, W.X. Shen, Finite-time blow-up prevention by lositic source in parabolic-elliptic chemotaxis models with singular sensitivity in any dimensional setting, Siam J. Math. Anal. 53 (2021) 973--1003. \\[-17pt]
\bibitem{ZZ2017} X. D. Zhao, S. N. Zheng, Global boundedness to a chemotaxis system
with singular sensitivity and logistic source, Z. Angew. Math. Phys. 68:2 (2017) 13 pp.

\bibitem{ZZ2019} X. D. Zhao, S. N. Zheng, Global existence and boundedness of solutions to
a chemotaxis system with singular sensitivity and
logistic-type source, J. Differ. Equ. 267 (2019) 826--865.\\[-17pt]
\bibitem{CWY2016} J.H. Cao, W. Wang, H. Yu, Asymptotic behavior of solutions to two-dimensional chemotaxis
system with logistic source and singular sensitivity, J. Math. Anal. Appl. 436 (2016) 382-392.\\[-17pt]
\bibitem{LL2021} J.Q. Li, Z.P. Li, Large time behavior of solutions to a chemotaxis
system with singular sensitivity and logistic source,  Mathematische Nachrichten 294 (2021) 1374--1383. \\[-17pt]
\bibitem{Z2022} X.D. Zhao, Boundedness to a parabolic-parabolic singular chemotaxis
system with logistic source, J. Differ. Equ.  338 (2022) 388--414. \\[-17pt]
\bibitem{Z2023} X.D. Zhao, Boundedness in a logistic chemotaxis system with weakly singular sensitivity in dimension two, Nonlinearity 36 (2023) 3909--3938.\\[-17pt]
\bibitem{K2025} H.I. Kurt, Boundedness in a chemotaxis system with weak
singular sensitivity and logistic kinetics in any dimensional setting, J. Differ. Equ. 416 (2025) 1429--1461. \\[-17pt]
\bibitem{L2025} M. Le, Boundedness in a chemotaxis system with weakly singular sensitivity in dimension two with arbitrary sub-quadratic degradation sources, J. Math. Anal. Appl. 542 (2025) No. 128803. \\[-17pt]
\bibitem{LK2025} M. Le, H.I. Kurt, Global boundedness in a chemotaxis-growth system with weak
singular sensitivity in any dimensional setting, Nonlinear Anal. RWA 86 (2025) 104392. \\[-17pt]
\bibitem{K2026} H.I. Kurt, Large-time dynamics of solutions in a logistic chemotaxis system with weak singular sensitivity: uniform boundedness, pointwise persistence and stability, Discrete and Continuous Dynamical Systems 52 (2026) 255--292. \\[-17pt]
\bibitem{ZMT2025} J. Zhang, C.L. Mu, X.Y. Tu, Global existence, boundedness and large
time behavior for a chemotaxis model with
singular sensitivity and nonlinear signal
production, Nonlinear Differ. Equ. Appl. (2025) 32:54. \\[-17pt]
\bibitem{PV1993} M.M. Porzio, V. Vespri, H\"{o}lder estimates for local solutions of some doubly nonlinear degenerate parabolic equations, J. Differ. Equ. 103 (1993) 146--178. \\[-17pt]
\bibitem{W2010} M. Winkler, Aggregation vs. global diffusive behavior in the higher-dimensional Keller-Segel
model, J. Differ. Equ. 248 (2010) 2889--2905.\\[-17pt]
















\end{thebibliography}}
\end{document}